\documentclass[a4paper,12pt]{amsart} 
\usepackage[utf8x]{inputenc}
\usepackage{amsmath, amssymb, amsthm, latexsym, mathrsfs, graphicx}
\usepackage{tikz-cd}
\usepackage[margin=1in]{geometry}

\usepackage{hyperref}
\hypersetup{
	colorlinks=true,
	linkcolor=blue,
	filecolor=magenta,      
	urlcolor=cyan,
}

\newcommand{\Gal}{\rm Gal}
\newcommand{\GL}{\rm GL}

\newcommand{\GLnF}{{\rm GL}_n(F)}

\newcommand{\id}{\rm id}
\newcommand{\Ind}{\rm Ind}

\newcommand{\Irr}{{\rm Irr}}

\newcommand{\lra}{\longrightarrow}

\newcommand{\mc}{\mathcal}

\newcommand{\slope}{{\rm sl}}

\newcommand{\Sw}{{\rm Sw}}

\newtheorem{theorem}{Theorem}[section] 
\newtheorem{lemma}[theorem]{Lemma}
\newtheorem{corollary}[theorem]{Corollary}
\newtheorem{proposition}[theorem]{Proposition}
\newtheorem{definition}[theorem]{Definition}

\newtheorem{remark}{Remark}

\numberwithin{question}{section}
\numberwithin{equation}{section}

\begin{document}
	\title{On the depth of the adjoint representations}
	\author{Arindam Jana and Amiya Mondal}
	\date{\today}
	\address{Indian Institute of Science Education and Research Berhampur, Odisha 760003, India.}	
	\email{arindamj@iiserbpr.ac.in, arindamjana076@gmail.com}
	\email{amiya@iiserbpr.ac.in}
	
	
	\subjclass{22E50, 11F80, 11S15}
	\keywords{Depth, Slope, Swan conductor, Galois representations, Herbrand functions}

	\begin{abstract}
		Let $F$ be a non-archimedean local field of odd residual characteristic $p$. The depth of a smooth representation of ${\rm GL}_n(F)$ is an invariant of Local Langlands Correspondence (LLC). The analogous notion on the Galois side of LLC is known as the slope of a local Galois representation. The slope is well related to the Swan conductor for irreducible Galois representations, whereas its behavior is subtle for reducible Galois representations. In this article, we provide an explicit formula for the slope of the adjoint of a Carayol representation of the local Galois group.
	\end{abstract}
	
	\maketitle
	
	

	\section{Introduction}
	Let $F$ be a non-archimedean local field of odd residual characteristic $p$. Let $\bf G$ be a connected reductive group defined over $F$ with the group of $F$-rational points $G:={\bf G} (F)$. The (conjectural) Local Langlands Correspondence (LLC) for the group $G$ is a finite to one map
	\[\Irr(G)\lra\Phi(G);~~\pi\mapsto\phi_\pi,\]
	where $\Irr(G)$ denotes the set of equivalence classes of all irreducible smooth representations of $G$ and $\Phi(G)$ denotes the set of equivalence classes of all Langlands parameters of $G$.  
	
	An important aspect in the smooth representation theory of $p$-adic groups is the study of the conductor and the depth of smooth representations. It is natural to ask if the LLC preserves conductor and depth.
	
	The conductor of a smooth representation is defined via the $\varepsilon$-factor associated to the local functional equation. It is known that the conductor is preserved under LLC for $\GLnF$. The corresponding conductor on the Galois side is known as the Artin conductor. The Swan conductor is also an invariant closely related to the Artin conductor.     
	
    Hence, it is of interest to investigate the behaviour of the Artin and Swan conductor under functoriality which is one of the pillars of the Langlands Program. For given irreducible supercuspidal representations $\pi_i$ of ${\GL}_{n_i}(F)$ for $i=1,2$, the Rankin-Selberg lift $\pi_1\times\pi_2$ to the group ${\GL}_{n_1n_2}(F)$ corresponds to the tensor product $\phi_{\pi_1}\otimes\phi_{\pi_2}$ under the LLC. The pioneering work of Bushnell-Henniart-Kutzko studies the conductor of the Rankin-Selberg lift (see \cite[Theorem 6.5, p. 723]{bus-hen-kut98}). Using this result, the conductor of other functorial lifts, namely, symmetric square, exterior square and Asai lifts have been studied in \cite{ana-mon15,ana-mon16}. Although an explicit conductor formula on the Galois side of the correspondence might be a bit more involved, sharp bound of these invariants have been studied using purely Galois-theoretic techniques (see \cite{bus-hen17,kil19, hen-oi24} for reference).

	The depth $d(\pi)$ of an irreducible smooth representation $\pi$ of a reductive $p$-adic group $G$ was defined by Moy-Prasad \cite{moy-pra94,moy-pra96} in terms of filtrations of its parahoric subgroups. On the other hand, under the (conjectural) local Langlands correspondence (LLC), one can define the depth $d(\phi_\pi)$ of the Langlands parameter $\phi_\pi$ of $\pi$ using the ramification filtrations of the absolute Galois group $\Gal(F_s/F)$ of $F$ (see \cite[p. 18]{abps16}). 
	It is an active area of research whether depth is preserved under the LLC.
	This means, one would like to know if
	\[d(\pi)=d(\phi_\pi),~{\rm for}~ \pi\in\Pi_\phi(G).\] 
    This is proved in many cases. In particular, this is known for the group ${\GL}_n(F)$ \cite[Theorem 2.15, p. 541]{oi23} (originally \cite[2.3.6]{yu09}) and \cite[Proposition 4.2]{abps16}.
	
    Along the same line, one would like to study the behaviour of depth under functorial lifts on ${\GL}_n(F)$. For irreducible supercuspidal representations $\pi_i$ of ${\GL}_{n_i}(F)$ for $i=1,2$, it is known that
        \[d(\pi_1\times\pi_2)\leq\max\{d(\pi_1),d(\pi_2)\}.\]
    Moreover, if $d(\pi_1)\neq d(\pi_2)$, then $d(\pi_1\times\pi_2)=\max\{d(\pi_1),d(\pi_2)\}$ (see \cite[Proposition 3.4, p. 21]{kil19}). 
	
    In this article, we are interested in the depth of the adjoint lift. More precisely, for a Carayol representation $\pi$ of ${\GL}_n(F)$, we find the depth $d(\pi\times\pi^\vee)$, where $\pi^\vee$ denotes the contragredient or smooth dual of $\pi$. The notion of slope $\slope(\rho)$ of a Galois representation $\rho$ is defined in (\cite[\S(3.2.1)]{bus-hen19}, \cite[Definition 3.1, p. 20]{kil19}).
     Comparing this definition with that of depth as in \cite[p. 18]{abps16}), it follows that 
            \[d(\phi_\pi)=\slope(\phi_\pi),\]
    for any Langlands parameter $\phi_\pi$ of $\GLnF$. With this understanding, our treatment is fully on the Galois side of the LLC. From now onwards, we use the notation
    $\slope(\rho)$ for the depth a Galois representation $\rho$.
	
    Let $\widehat{\mathcal{W}}_F$ be the set of equivalence classes of smooth, irreducible complex representations of the Weil group $\mathcal{W}_F$ associated to $F$. A representation $\rho\in\widehat{\mathcal W_F}$ is said to be {\bf Carayol} if $({\rm Sw}(\rho), \dim \, \rho)=1$ \cite[\S 7]{kil19}. The representation $\rho$ is called {\bf epipelagic} if in particular ${\rm Sw}(\rho)=1$.

    It turns out that all Carayol representation are tamely irreducible (\cite[Proposition 7.2]{kil19}, \cite{bus-hen14}), i.e., restriction of $\rho$ on the inertia remains irreducible. However, sometimes in the literature, e.g., in \cite{bus-hen19}, Carayol representations are defined on the class of wildly irreducible representations, i.e., restriction of $\rho$ on the wild inertia is irreducible. Notice that every wildly irreducible representation is tamely irreducible. In this paper we work on the larger class of Carayol representations as in \cite{kil19} which are tamely but not necessarily wildly irreducible.
        

    Let $\rho$ be a Carayol representation of $\mathcal{W}_F$. We find an explicit formula for the slope of the adjoint representation ${\rm Ad}(\rho):=\rho \otimes {\rho}^{\vee}$. Our method uses the ramification filtration of the Weil group $\mc W_F$ and its associated Herbrand functions. The first main theorem of this article is the following.
       \begin{theorem}\label{int thm slope adjoint tame}
		Let $F$ be a $p$-adic field of residue characteristic $p\neq 2$. Let $\rho$ be a Carayol representation of $\mc W_F$ of dimension $mp^r$ where $(p, m)=1$. If $m>1$, then ${\rm sl}(\rho\otimes\rho^\vee)={\rm sl}(\rho)$.
        \end{theorem}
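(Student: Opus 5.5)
The plan is to compare the two slopes through the ramification filtration $\{\mc W_F^v\}_{v\ge 0}$. Write $n=mp^r=\dim\rho$ and $c:={\rm sl}(\rho)$. Since $\rho$ is irreducible, $c=\Sw(\rho)/n$, and the Carayol condition $(\Sw(\rho),n)=1$ makes $c=s/n$ a fraction in lowest terms. Because $m>1$, we have $n>1$, so $c\notin\mathbb Z$.

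\emph{Upper bound.} For $v>c$ the group $\mc W_F^{v}$ acts trivially on $\rho$, hence also on $\rho^\vee$ and on $\rho\otimes\rho^\vee$. This gives ${\rm sl}(\rho\otimes\rho^\vee)\le c$ at once.

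\emph{Reduction for the reverse inequality.} It suffices to show that $\mc W_F^{c}$ does not act trivially on $\rho\otimes\rho^\vee\cong{\rm End}(\rho)$. The group $\mc W_F^c$ acts on ${\rm End}(\rho)$ by conjugation, so it acts trivially exactly when $\rho(\mc W_F^c)$ consists of scalars, since $\rho$ is irreducible. Suppose for contradiction that $\rho|_{\mc W_F^c}=\chi\cdot{\rm id}$ for a character $\chi$ of $\mc W_F^c$. Then $\chi$ is nontrivial, because $c$ is a jump of $\rho$, and $\chi$ is $\mc W_F$-stable. Equivalently, the projective representation $\bar\rho:\mc W_F\to {\rm PGL}_n(\mathbb C)$ has slope strictly less than $c$.

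\emph{Lifting and contradiction.} Next I would lift $\bar\rho$ to a genuine representation $\rho'$ of $\mc W_F$ of the same slope, that is ${\rm sl}(\rho')<c$. Then $\rho\cong\rho'\otimes\phi$ for some character $\phi$ of $\mc W_F$. By local class field theory (Hasse--Arf), ${\rm sl}(\phi)$ is an integer. There are two cases.
\begin{itemize}
\item If ${\rm sl}(\phi)\le{\rm sl}(\rho')$, then ${\rm sl}(\rho)\le{\rm sl}(\rho')<c$, a contradiction.
\item If ${\rm sl}(\phi)>{\rm sl}(\rho')$, then $\rho|_{\mc W_F^v}=\phi|_{\mc W_F^v}\cdot{\rm id}$ for $v$ just below ${\rm sl}(\phi)$. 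Hence ${\rm sl}(\rho)={\rm sl}(\phi)\in\mathbb Z$, contradicting $c\notin\mathbb Z$.
\end{itemize}

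\emph{Main obstacle: the slope-preserving lift.} Tate's theorem gives some lift of $\bar\rho$, but not one with controlled slope. The obstruction to adjusting the lift sits in the wild part: a character of $\mc W_F^c/\mc W_F^{c+}$ that is stable under $\mc W_F$ need not extend to $\mc W_F$. This is where I expect the hypothesis $m>1$ to be essential; for $m=1$, e.g.\ a wildly irreducible $\rho$ of dimension $p^r$, the argument should genuinely fail. To exploit $m>1$ I would proceed as follows.
\begin{enumerate}
\item Use tame irreducibility to write $\rho|_{\mc P_F}$, where $\mc P_F$ is the wild inertia, via Clifford theory as a sum of $m$ pairwise non-isomorphic conjugates $\tau_1,\dots,\tau_m$, each of dimension $p^r$. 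Here $m>1$ guarantees at least two distinct conjugates.
\item Base change to the tame extension $K/F$ fixing the stabiliser of $\tau_1$. The Herbrand function $\psi_{K/F}$ is multiplication by $e(K/F)$ above $0$, so it transports the jump $c$ to $e(K/F)\,c$.
\item Show that the scalar character $\chi$ on $\mc W_F^c=\mc W_K^{e(K/F)c}$ then forces $\tau_i|_{\mc W_K^{e(K/F)c}}\cong\tau_j|_{\mc W_K^{e(K/F)c}}$ in a way incompatible with the tame twisting that permutes the $\tau_i$. The tame twisting acts on $\mc W_F^c/\mc W_F^{c+}$ through a nontrivial prime-to-$p$ character, because the denominator of $c$ contains the factor $m$. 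A stable nontrivial $\chi$ would therefore have to be fixed by a nontrivial prime-to-$p$ scaling, which I expect to force $\chi=1$, the desired contradiction.
\end{enumerate}
Checking this last fixed-point computation carefully is the step I anticipate being hardest.
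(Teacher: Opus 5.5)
Your upper bound and your reduction are correct: $\rho(\mc I_F)$ is finite and the upper filtration is left-continuous, so ${\rm sl}(\rho\otimes\rho^\vee)<c$ exactly when $\rho(\mc W_F^c)$ consists of scalars. Also $c\notin\mathbb Z$. Beyond that, however, the argument is not carried out, so there is a genuine gap.

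\emph{The lifting step.} The lift $\rho'$ of $\bar\rho$ with ${\rm sl}(\rho')<c$ is never constructed, and it is not an independent input. Combined with your two-case Hasse--Arf argument, its existence is equivalent to the theorem, and (as you note) it fails for $m=1$.

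\emph{Steps 1 and 2.} These are fine but carry no force. The scalar hypothesis only says that every $\tau_i$ acts on $\mc W_F^c$ through the same character $\chi$. That is perfectly compatible with $\tau_j=\tau_i^g$ as long as $\chi^g=\chi$.

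\emph{Step 3.} So everything rests on step 3: the claim that tame inertia moves every nontrivial character of $\mc W_F^c/\mc W_F^{c+}$ when the denominator of $c$ is not a power of $p$. You leave this as an expectation, and it does not follow from step 2. For $r\ge1$ and tame $K$, the number $\psi_{K/F}(c)=e(K/F)c$ is never an integer. So you cannot model characters of $\mc W_F^c/\mc W_F^{c+}$ by unit groups of a tame extension, and you have nothing on which to compute a ``prime-to-$p$ scaling''. That structural statement must be proved or cited. The paper gets the corresponding input differently: it Mackey-decomposes over $F'/F$ and further down to a character of $\mc W_K$, then does a unit-group computation of ${\rm sl}(\chi^{1-\lambda})$ for tame $\lambda$ (Proposition~\ref{tame-slope-invariant}).

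\emph{How to close it in your framework.} Work in a finite Galois group instead of $\mc W_F^c/\mc W_F^{c+}$.
\begin{enumerate}
\item[(a)] Choose a finite Galois $E/F$ with $\mc I_E=\ker(\rho|_{\mc I_F})$, so that $\rho$ is faithful on $G_0={\rm Gal}(E/F)_0$. Write $e_0=[G_0:G_1]$, $|G_0|=e_0p^a$ and $j=\psi_{E/F}(c)$. Then $j$ is an integer with $G_j\neq 1=G_{j+1}$.
\item[(b)] If $\rho(G_j)$ is scalar, faithfulness makes $G_j$ central in $G_0$. Serre's formula $\theta_j(sts^{-1})=\theta_0(s)^j\theta_j(t)$ for $s\in G_0$, $t\in G_j$ \cite[Chapter IV, \S2]{ser79} then forces $e_0\mid j$.
\item[(c)] On the other hand, $|G_0|\,c=\sum_{i=1}^{j}|G_i|=j+\sum_{t\in G_1\setminus\{1\}}(v_E(t)-1)$, and the last sum is divisible by $e_0$. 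Indeed, a cyclic complement $C$ of $G_1$ in $G_0$ acts on $G_1\setminus\{1\}$ by conjugation, preserving $v_E$. By the same formula, the stabilizer in $C$ of $t\in G_i\setminus G_{i+1}$ has order dividing $i$, so each orbit contributes a multiple of $e_0$.
\item[(d)] Hence $e_0\mid |G_0|c$, i.e.\ $p^ac\in\mathbb Z$. This contradicts $c=\Sw(\rho)/(mp^r)$ with $(\Sw(\rho),mp^r)=1$ and $m>1$.
\end{enumerate}
This bypasses the lift entirely. It also gives ${\rm sl}(\rho\otimes\rho^\vee)={\rm sl}(\rho)$ for every irreducible $\rho$ with ${\rm sl}(\rho)\notin\mathbb Z[1/p]$.
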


    Using the LLC, it follows from Theorem \ref{int thm slope adjoint tame} that $d(\pi\times\pi^\vee)=d(\pi)$ for a Carayol representation $\pi$ of $\GLnF$ where $n=mp^r$ with $m>1$. On the other hand, if $m=1$, then we have obtained an explicit formula for $d(\pi\times\pi^\vee)$ which shows that $d(\pi\times\pi^\vee)< d(\pi)$. To explain the result, we shift to the Galois side of the LLC.

    Let $\rho$ be a Carayol Galois representation of dimension $p^r$. Before we describe the general result on $\slope(\rho\otimes\rho^\vee)$, let us mention the following theorem which describes the slope of the adjoint of a Carayol representation, wildly induced from a character (see Theorem \ref{thm slope adjoint wild} for details).
    This result lies in the heart of the second main theorem (Theorem \ref{int thm slope carayol}) to follow. For convenience, from now on we denote ${\rm Ind}_{K/F}(~):={\rm Ind}_{\mathcal{W}_K}^{\mathcal{W}_F}(~)$ for an extension $K/F$. 
	\begin{theorem}\label{int thm slope adjoint wild} 
		Let $F$ be a $p$-adic field of residue characteristic $p\neq 2$. Let $\rho$ be a Carayol representation of $\mc W_F$ of dimension $p^r$, $r>1$. Suppose $\rho={\rm Ind}_{K/F}\chi$ for some totally ramified Galois extension $K/F$ such that ${\rm Gal}(K/F)$ is cyclic and a character $\chi$ of $\mc W_K$. Then
		\[{\rm sl}(\rho \otimes \rho^{\vee})={\rm sl}(\rho)-\dfrac{i_0}{p^r},\] 
		where $i_0$ is the smallest lower jump in ${\Gal}(K/F)$.
	\end{theorem}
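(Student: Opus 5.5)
The plan is to decompose $\rho\otimes\rho^\vee$ by Mackey theory into pieces induced from $K$, compute the slope of each piece on $\mc W_K$, and then transport these slopes back to $F$ with the Herbrand function $\varphi_{K/F}$.

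\textbf{Mackey decomposition.} Write $G={\rm Gal}(K/F)$, a cyclic group of order $p^r$, and regard each $g\in G$ as acting on $\mc W_K$ by conjugation. Since $K/F$ is Galois, Mackey's formula gives ${\rm Res}_{\mc W_K}\rho^\vee=\bigoplus_{g\in G}(\chi^{g})^{-1}$. The projection formula then yields
\[
\rho\otimes\rho^\vee\;\cong\;{\rm Ind}_{K/F}\bigl(\chi\otimes{\rm Res}_{\mc W_K}\rho^\vee\bigr)\;\cong\;\bigoplus_{g\in G}{\rm Ind}_{K/F}\bigl(\chi/\chi^{g}\bigr).
\]
The slope of a direct sum is the maximum of the slopes of the summands, so we have
\[
{\rm sl}(\rho\otimes\rho^\vee)=\max_{g\in G}{\rm sl}\bigl({\rm Ind}_{K/F}(\chi/\chi^g)\bigr).
\]

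\textbf{The summand $g=1$.} Here ${\rm Ind}_{K/F}1$ is the regular representation of $G$. Its slope is the largest upper ramification jump $u_{\max}$ of $G$, which equals $\varphi_{K/F}(i_{\max})$, where $i_{\max}$ is the largest lower jump of $G$.

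\textbf{The summands $g\neq1$.} Let $n={\rm sl}_K(\chi)$, normalized so that $\chi$ is nontrivial on $U_K^{n}$ and trivial on $U_K^{n+1}$.

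First, I will use the standard fact for a wildly induced Carayol representation: ${\rm sl}(\rho)=\varphi_{K/F}(n)$ and $n>i_{\max}$. The Carayol condition forces ${\rm Sw}(\rho)$ to be prime to $p$, and via the conductor–discriminant formula this gives $p\nmid n$.

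Next, by class field theory, $\chi^g/\chi$ corresponds to $x\mapsto\chi(g(x)/x)$ on $K^\times$. Let $i(g)$ denote the lower ramification number of $g$, so that $v_K(g(\pi_K)/\pi_K-1)=i(g)$. For $x=1+a\pi_K^k$ with $k\ge1$ one computes
\[
g(x)/x\equiv 1+k\,a\,c_g\,\pi_K^{k+i(g)}\pmod{\pi_K^{k+i(g)+1}},\qquad c_g\neq0.
\]
Hence $x\mapsto g(x)/x$ maps $U_K^{k}$ into $U_K^{k+i(g)}$, and it surjects onto the graded piece $U_K^{k+i(g)}/U_K^{k+i(g)+1}$ whenever $p\nmid k$. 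The contribution of uniformizers and $\mu_{q-1}$ must also be checked; these land in $U^{i(g)}$ and are harmless because $n>i_{\max}$. Taking $k=n-i(g)$ shows that ${\rm sl}_K(\chi/\chi^g)=n-i(g)$, provided $p\nmid n-i(g)$.

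The coprimality of $n-i(g)$ with $p$ is the main obstacle. For $g$ a generator of $G$ we have $i(g)=i_0$, and in a cyclic totally ramified $p$-extension the first jump $i_0$ is prime to $p$. Combined with $p\nmid n$ alone this does not yet force $p\nmid n-i_0$, so I would instead use the upper-numbering form of the Carayol condition, $p^r\varphi_{K/F}(n)$ prime to $p$, and compute $\varphi_{K/F}(n)$ explicitly via Hasse–Arf to derive the needed congruence. If this route fails, I would replace the leading-term computation by the norm/trace estimate ${\rm sl}_K(\chi/\chi^g)\ge n-i(g)$ together with an upper bound, and show equality only for a generator $g$, which is all we need.

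\textbf{Transport to $F$ and conclusion.} Granting the slopes above, I use that ${\rm sl}({\rm Ind}_{K/F}\psi)=\max\{\varphi_{K/F}({\rm sl}_K\psi),u\}$, where $u$ is bounded by the upper jumps of $G$. This follows from $W_F^{v}\cap\mc W_K=\mc W_K^{\psi_{K/F}(v)}$ for $v$ beyond $u_{\max}$. The quantity $\varphi_{K/F}(n-i(g))$ is maximized when $i(g)$ is smallest, namely $i(g)=i_0$. We have $n-i_0\ge i_{\max}$, which I will check from the explicit form of $n$; on this range $\varphi_{K/F}$ is linear of slope $1/p^r$. Therefore
\[
\varphi_{K/F}(n-i_0)=\varphi_{K/F}(n)-\frac{i_0}{p^r}={\rm sl}(\rho)-\frac{i_0}{p^r}.
\]
Finally, this value exceeds $u_{\max}$, and it exceeds the slopes $\varphi_{K/F}(n-i(g))$ from non-generators $g$, since those have $i(g)>i_0$. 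Hence it is the maximum over all summands, which proves the formula.
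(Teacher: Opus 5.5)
\textbf{The gap.} The problem is the step you defer: ``$n-i_0\ge i_{\max}$, which I will check from the explicit form of $n$'', which is the same as ``this value exceeds $u_{\max}$''. This cannot be checked, because it is false in general.

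Irreducibility and the Carayol condition only force $n>i_{\max}$ and $n\not\equiv i_0 \pmod p$. Carayol does not give $p\nmid n$. Since ${\rm Sw}(\rho)=n+\sum_{g\neq1}i(g)$, and by Hasse--Arf all lower jumps of a cyclic $p$-group are congruent mod $p$, one gets ${\rm Sw}(\rho)\equiv n-i_0$. Hence $p\nmid n-i(g)$ for all $g\neq1$. Your fallback via $p^r\phi_{K/F}(n)$ lands exactly on this, and it is the paper's lemma $\varsigma\not\equiv\delta \pmod p$.

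So $n=i_{\max}+1$ is allowed. Your leading-term computation with $k=1$ shows $\chi^h\neq\chi$ for $h$ of order $p$, and ${\rm Sw}(\rho)\equiv1\pmod p$. Since $\phi_{K/F}(x)=(x+w)/p^r$ for $x\ge i_{\max}$, with $w=\sum_{g\neq1}i(g)$, and $u_{\max}=\phi_{K/F}(i_{\max})$, you get
\[
{\rm sl}(\rho)-\frac{i_0}{p^r}=u_{\max}+\frac{n-i_{\max}-i_0}{p^r}.
\]
This is $<u_{\max}$ when $n=i_{\max}+1$ and $i_0\ge2$. But ${\rm sl}(\rho\otimes\rho^\vee)\ge u_{\max}$, because of the regular-representation summand you isolated.

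Such extensions exist, with $p=3$ and $r=2$. The field $F=\mathbb{Q}_3(\sqrt3)$ contains no $\mu_3$ and satisfies $(U_F^1)^3\subseteq U_F^3$. So an order-$3$ character of $F^\times$ with Swan conductor $2$ is the cube of a totally ramified order-$9$ character. This gives a cyclic $K/F$ of degree $9$ with $i_0=2$.

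What your argument actually proves is
\[
{\rm sl}(\rho\otimes\rho^\vee)=\max\Bigl\{u_{\max},\,{\rm sl}(\rho)-\frac{i_0}{p^r}\Bigr\}.
\]
So the stated formula holds exactly when ${\rm sl}(\chi)\ge i_{\max}+i_0$, and no argument can remove that condition.

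\textbf{The paper does not fill it either.} It combines ${\rm sl}(\chi)\ge l_r$ with its lemma $l_r-l_1\ge j_r$ to get ${\rm sl}(\chi)-i_0\ge j_r$. It then uses its corollary asserting that $\phi_{K/F}$ is linear of slope $p^{-r}$ on $[j_r,\infty)$. But $\phi_{K/F}=\psi_{K/F}^{-1}$ breaks at the lower jumps $l_k=\psi_{K/F}(j_k)$. So linearity starts only at $l_r$ ($=i_{\max}$), and $l_r>j_r$ for $r\ge2$. The paper also applies ${\rm sl}({\rm Ind}_{K/F}\psi)=\phi_{K/F}({\rm sl}\,\psi)$ to the trivial character. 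That discards the $u_{\max}$ term, which your version of the induction formula correctly keeps.

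Apart from this point, your outline follows the same path as the paper: Mackey decomposition, the leading-term computation of $g(x)/x$, and the bound ${\rm sl}(\chi^{g}/\chi)\le n-i(g)$ for all $g\neq1$ with equality for generators. The paper cites Kilford's Lemma 4.5 for the character slopes instead of computing them.
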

	
	The central idea of the proof of Theorem \ref{int thm slope adjoint wild} is an explicit understanding of the lower and upper jumps of the Herbrand functions associated to cyclic extensions which has been carried out in Section \ref{sec lemmas cyclic}. Using these techniques for the cyclic extensions, we further extend theorem \ref{int thm slope adjoint wild} for arbitrary extensions.
    \begin{theorem}\label{int thm slope adjoint any wild} 
		Let $F$ be a $p$-adic field of residue characteristic $p\neq 2$. Let $\rho$ be a Carayol representation of $\mc W_F$ of dimension $p^r$, $r>1$. Suppose $\rho={\rm Ind}_{K/F}\chi$ for some totally ramified Galois extension $K/F$ and a character $\chi$ of $\mc W_K$. Then
		\[{\rm sl}(\rho \otimes \rho^{\vee})={\rm sl}(\rho)-\dfrac{i_0}{p^r},\] 
		where $i_0$ is the smallest lower jump in ${\Gal}(K/F)$.
	\end{theorem}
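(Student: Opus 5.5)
Since $K/F$ is Galois and $\rho={\rm Ind}_{K/F}\chi$, I will start from Mackey's formula. It gives
\[\rho\otimes\rho^\vee\cong{\rm Ind}_{K/F}\big(\chi\otimes{\rm Res}_{K}{\rm Ind}_{K/F}\chi^{-1}\big)\cong\bigoplus_{\sigma\in{\rm Gal}(K/F)}{\rm Ind}_{K/F}(\chi\cdot(\chi^{\sigma})^{-1}).\]
The slope of a direct sum is the maximum of the slopes of its summands. So ${\rm sl}(\rho\otimes\rho^\vee)$ is the maximum over $\sigma\neq1$ of ${\rm sl}({\rm Ind}_{K/F}(\chi/\chi^\sigma))$; the summand for $\sigma=1$ is ${\rm Ind}_{K/F}1$, whose slope is at most the largest upper jump of $K/F$, and I will compare that separately. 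For a character $\theta$ of $\mathcal W_K$ with $K/F$ totally ramified, the induced representation has slope governed by the upper numbering: it equals $\max(\psi\text{-transfer})$, namely $\varphi_{K/F}(\slope(\theta))$ when that exceeds the jumps. I will use the Herbrand-function computations of Section~\ref{sec lemmas cyclic}, which carry over to a general $K/F$ by transitivity of $\varphi,\psi$. The first step is therefore to compute $\slope(\chi/\chi^\sigma)$ in terms of $\slope(\chi)$ and the lower ramification number $i(\sigma)$ of $\sigma$.

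The key local computation is this. Let $s=\slope(\chi)$. The Carayol condition implies $s$ is not an integer multiple compatible with smaller degree, and in particular $\chi$ does not factor through the norm from any intermediate field. Then for $\sigma\in G_{i}\setminus G_{i+1}$ (lower numbering), the character $\chi/\chi^\sigma$ has slope exactly $s-i$. To see this, I will argue via local class field theory. On $U_K^{t}$, we have $\chi^\sigma(u)=\chi(\sigma u)$ and $\sigma u/u\in U_K^{t+i}$, so on $U_K^{s-i}$ the quotient is $\chi(\sigma u/u)$. The map $u\mapsto\sigma u/u$ on graded pieces $U^{s-i}/U^{s-i+1}\to U^{s}/U^{s+1}$ is multiplication by a unit times $\pi^{i}$ composed with a nonzero linear map; this is the standard fact that this map is nontrivial when $p\nmid (s-i)$. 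The coprimality in the Carayol hypothesis ensures $p\nmid s$ after normalization, and hence the relevant map is nonzero, so $\chi/\chi^\sigma$ is nontrivial on $U^{s-i}$ and trivial on $U^{s-i+1}$. The largest such slope occurs for $\sigma$ in the smallest lower filtration step, i.e.\ $i=i_0$.

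Next I transfer to $F$. Using $\slope(\rho)=\varphi_{K/F}(s)/\ldots$ in the paper's normalization (the slope of ${\rm Ind}$ is $\varphi_{K/F}(s)$ with $s$ above all jumps), I get ${\rm sl}({\rm Ind}(\chi/\chi^\sigma))=\varphi_{K/F}(s-i_0)$. Since $s-i_0$ still exceeds the largest lower jump (Carayol forces $s$ to be large relative to the jumps; I will verify this), $\varphi_{K/F}$ is linear there with slope $1/p^r=1/[K:F]$. This yields $\varphi_{K/F}(s)-i_0/p^r={\rm sl}(\rho)-i_0/p^r$. This quantity also dominates the slope of ${\rm Ind}_{K/F}1$, whose slope is the largest upper jump $\varphi_{K/F}(i_{\max})$, since $s-i_0>i_{\max}$.

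The main obstacle is the middle step: establishing that $\slope(\chi/\chi^\sigma)$ is exactly $s-i(\sigma)$, and that $s-i_0$ lies beyond the last lower jump, for a non-cyclic (possibly non-abelian $p$-group) $K/F$. For this I plan to reduce to the cyclic case of Theorem~\ref{int thm slope adjoint wild}. I will apply that theorem to the cyclic subgroup $\langle\sigma\rangle$ with fixed field $L$, using Herbrand's theorem that lower numbering is compatible with subgroups, so $i(\sigma)$ computed in $\langle\sigma\rangle$ equals that in ${\rm Gal}(K/F)$. This reduces the estimate to the cyclic lemmas already proved.
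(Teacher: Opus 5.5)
Your outline follows the paper's own route: the Mackey decomposition, ${\rm sl}(\chi/\chi^\sigma)=s-i(\sigma)$ from the graded map $u\mapsto\sigma(u)/u$, the maximum at $i(\sigma)=i_0$, and then transport by $\varphi_{K/F}$. On one point you are more careful than the paper. You have ${\rm sl}({\rm Ind}_{K/F}\theta)=\max\{u_{\max},\varphi_{K/F}({\rm sl}\,\theta)\}$, with $u_{\max}$ the largest upper jump of ${\rm Gal}(K/F)$, so the summand ${\rm Ind}_{K/F}1$ contributes exactly $u_{\max}$. Proposition \ref{slope-induction}, as the paper uses it, would give that summand $\varphi_{K/F}(0)=0$.

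The gap is the step you postpone: that $s-i_0$ is at least the largest lower jump $l_{\max}$ because ``Carayol forces $s$ to be large''. It does not. Irreducibility and the Carayol condition give only $s>l_{\max}$ and, for cyclic $K/F$, $p\nmid(s-i_0)$. They do not give $p\nmid s$; what matters is $s-i(\sigma)$, as you yourself said a sentence earlier, and in the example below $s=9$. Without $s-i_0\ge l_{\max}$, neither the linearity $\varphi_{K/F}(s-i_0)=(s-i_0+w_{K/F})/p^r$ nor the domination of ${\rm Ind}_{K/F}1$ holds. In fact the statement itself fails.

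\textbf{Counterexample.} Take $p=3$ and $F=\mathbb{Q}_3(\zeta_3)$, so $v_F(3)=2$. Cubing maps $U_F^i$ onto $U_F^{i+2}$ for $i\ge 2$, and $(U_F^1)^9\subseteq U_F^5$. Hence there is a character $\eta$ of $F^\times$ with $\eta^9=1$, trivial on $U_F^5$ and nontrivial on $U_F^4$. Then $\eta^3$ is nontrivial on $U_F^2$ and trivial on $U_F^3$. So $\eta$ cuts out a cyclic, totally ramified $K/F$ of degree $9$ with
\begin{itemize}
\item upper jumps $2$ and $4$;
\item lower jumps $i_0=2$ and $l_{\max}=8$;
\item wild exponent $w_{K/F}=6\cdot 2+2\cdot 8=28$.
\end{itemize}
Let $\chi$ be a character of $K^\times$ with $s={\rm Sw}(\chi)=9$. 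For $\sigma$ of order $3$ and $a\in\mathbb{Z}_3$,
\[
\sigma(1+a\pi_K)/(1+a\pi_K)\equiv 1+ab\pi_K^{9}\pmod{\pi_K^{10}},
\]
with $b$ a unit. So $\chi^\sigma\neq\chi$, and $\rho={\rm Ind}_{K/F}\chi$ is irreducible with ${\rm Sw}(\rho)=9+28=37$, which is prime to $9$. Thus $\rho$ is Carayol with $r=2$ and ${\rm sl}(\rho)=37/9$, but $s-i_0=7<8$.

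Since $\eta$ is trivial on $\mathcal W_K$, we have $\rho\otimes\eta\cong\rho$, so $\eta\subset\rho\otimes\rho^\vee$ and ${\rm sl}(\rho\otimes\rho^\vee)\ge 4$. In fact equality holds: every other summand has slope $\max\{4,\varphi_{K/F}(s')\}$ with $s'\le 7$, and $\varphi_{K/F}(7)=11/3$. The formula, however, predicts $37/9-2/9=35/9$.

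This $K/F$ is cyclic, so reducing to Theorem \ref{thm slope adjoint wild} cannot help. The paper does not close the step either. Lemma \ref{lem slope lower jump} and Lemma \ref{large slope} give only ${\rm sl}(\chi)-i_0\ge l_r-l_1\ge j_r=u_{\max}$, a comparison with the largest upper jump. But $\varphi_{K/F}$ is linear only on $[\psi_{K/F}(u_{\max}),\infty)=[l_{\max},\infty)$. Remark \ref{explicit phi} and Corollary \ref{cyclic phi} place the breakpoints of $\varphi$ where those of $\psi$ are.

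What your argument does prove, for cyclic $K/F$, is
\[
{\rm sl}(\rho\otimes\rho^\vee)=\max\{u_{\max},\,{\rm sl}(\rho)-i_0/p^r\}.
\]
This agrees with the asserted formula exactly when ${\rm sl}(\chi)\ge l_{\max}+i_0$, so some hypothesis of this kind has to be added.
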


    However, Carayol representations need not be induced from characters in general, for example, epipelagic representations of dimension $p^r$. The good thing is that any Carayol representation can be constructed from a character $\chi$ of $\mc W_K$ for some extension $K/F$ through certain inductions and restrictions (see Section \ref{sec slope gen}). The following theorem describes the slope of the adjoint of any Carayol representation.
	\begin{theorem}\label{int thm slope carayol}
      Let $F$ be a $p$-adic field of residue characteristic $p\neq 2$. Let $\rho$ be a Carayol representation of $\mc W_F$ of dimension $p^r$. Then
       \[{\rm sl}(\rho\otimes \rho^{\vee})={\rm sl}(\rho)-\dfrac{\phi_{K/F}(i_0)}{\dim \, \rho}\, ,\]
      where $i_0$ is the smallest nonzero lower jump in ${\Gal}(K/F)$ for the finite Galois extension $K/F$. 
    \end{theorem}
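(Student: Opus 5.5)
The plan is to reduce to the induced case already treated in Theorem \ref{int thm slope adjoint any wild}, through the structure theory of Carayol representations of dimension $p^r$. Since $\rho$ is Carayol, it is tamely irreducible. After an unramified twist and passage to a tame extension it becomes an induced representation. Concretely, I would choose a finite tamely ramified extension $E/F$ with $[E:F]$ prime to $p$ such that the restriction $\rho_E:=\rho|_{\mc W_E}$ is still irreducible. It is Carayol over $E$, with $\Sw(\rho_E)=e(E/F)\Sw(\rho)$ still prime to $p^r$. It is also of the form ${\rm Ind}_{K/E}\chi$ for a totally ramified Galois $p$-extension $K/E$ and a character $\chi$ of $\mc W_K$. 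This is the ``inductions and restrictions'' description of Section \ref{sec slope gen}. Here $K/F$ is the finite Galois extension of the statement, taken Galois over $F$ after replacing $E$ by a suitable tame Galois closure.

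The key steps, in order, are as follows.

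First, I would record how slopes behave under tame base change. For any semisimple $\sigma$ of $\mc W_F$ and tame $E/F$, the upper numbering transforms by $\phi_{E/F}$, which on the wild range is multiplication by $e=e(E/F)$. Hence ${\rm sl}(\sigma|_{\mc W_E})=e\,{\rm sl}(\sigma)$, since the slope is the largest upper ramification break at which $\sigma$ is nontrivial.

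Second, I would apply this to both $\rho$ and ${\rm Ad}(\rho)$. Restriction commutes with tensor products and duals, so ${\rm Ad}(\rho)|_{\mc W_E}={\rm Ad}(\rho_E)$. This gives
\[ {\rm sl}({\rm Ad}(\rho))=\tfrac1e\,{\rm sl}({\rm Ad}(\rho_E)),\qquad {\rm sl}(\rho)=\tfrac1e\,{\rm sl}(\rho_E).\]

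Third, I would apply Theorem \ref{int thm slope adjoint any wild} over the base $E$, or its $r=1$ analogue, which I expect the same cyclic-extension argument handles. This yields ${\rm sl}({\rm Ad}(\rho_E))={\rm sl}(\rho_E)-i_0'/p^r$, where $i_0'$ is the smallest lower jump of ${\rm Gal}(K/E)$.

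Finally, I would translate back to $F$. Lower numbering is inherited by subgroups, so the smallest nonzero lower jump $i_0$ of ${\rm Gal}(K/F)$ equals $i_0'$: the wild inertia of $K/F$ is ${\rm Gal}(K/E)$ when $E$ is chosen to be the maximal tame subextension. Using $\phi_{K/F}=\phi_{E/F}\circ\phi_{K/E}$ with $\phi_{K/E}(i_0)=i_0$, since $i_0$ is the first jump, and $\phi_{E/F}(x)=x/e$, I get $i_0'/(e p^r)=\phi_{K/F}(i_0)/\dim\rho$. This is exactly the claimed formula.

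I expect the main obstacle to be the first step, namely producing $E$ and $K$ with all the required properties simultaneously. We need $\rho_E$ to be irreducible and induced from a character along a totally ramified extension, $K/F$ to be Galois, and $E$ to coincide with the maximal tame subextension so that lower jumps match. I would first try to take $E$ to be the fixed field of the kernel of the projective image on tame inertia, following the Bushnell–Henniart/Kilic construction. I would then check the matching of jumps against the normalization conventions of Section \ref{sec slope gen}. Care is also needed with the normalization of slope under restriction, in particular whether the break is taken with $\phi_{E/F}$ or with multiplication by $e$. The two agree only above the tame range, and this holds here because all slopes involved are positive.
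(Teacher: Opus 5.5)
For primitive $\rho$ your argument is exactly the paper's Case~1 (Lemma~\ref{tame restriction formula}, then Theorem~\ref{thm slope adjoint any wild} over $T$, then Lemma~\ref{same-tame-jump}). The routes differ only for imprimitive $\rho={\rm Ind}_{E/F}\tau$. There the paper keeps the wild--tame--wild tower $F\subset E\subset T\subset K$ of Section~\ref{sec slope gen} and runs a second Mackey decomposition through Lemmas~\ref{same-tame-slope} and~\ref{sub-quo-lower-jump}. Your Step~1 is not supplied by that section, and monomiality of a $p$-group projective image does not by itself give a normal inducing subgroup. Step~1 does hold under the paper's standing assumption that $K/F$ is Galois. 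Put $E_0:=K^{{\rm Gal}(K/F)_1}$. Then $E\cap E_0=F$ and $EE_0=K^{{\rm Gal}(K/E)_1}=T$. Mackey gives $\rho|_{\mathcal W_{E_0}}={\rm Ind}_{T/E_0}(\tau|_{\mathcal W_T})={\rm Ind}_{K/E_0}\chi$, with ${\rm Gal}(K/E_0)={\rm Gal}(K/F)_1$. So your uniform tame base change can replace, and simplify, the paper's Case~2 computation.

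The genuine gap is Step~3. Theorem~\ref{thm slope adjoint any wild} is only available for $r>1$, and the ``$r=1$ analogue'' you expect is false. In ${\rm Ad}(\rho_E)=\bigoplus_{\lambda}{\rm Ind}_{K/E}\chi^{1-\lambda}$, the summand for $\lambda=1$ is ${\rm Ind}_{K/E}\mathbf 1$, the regular representation of ${\rm Gal}(K/E)$. Its slope is the largest upper jump $j$ of $K/E$, not $\phi_{K/E}(0)=0$. Take $r=1$, $K/E$ with break $t\ge 2$ (these exist once $e(E/\mathbb{Q}_p)\ge 2$), and ${\rm Sw}(\chi)=t+1$. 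Then $\rho_E$ is irreducible and Carayol with ${\rm Sw}(\rho_E)=pt+1$, and ${\rm sl}({\rm Ad}\,\rho_E)=t$, whereas the formula gives $t-(t-1)/p$.

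The same summand defeats the formula for every $r$ whenever $l<{\rm sl}(\chi)<l+i_0$, where $l$ is the largest lower jump of ${\rm Gal}(K/E)$; for example ${\rm Sw}(\chi)=l+1$ with $i_0\ge 2$. In that range ${\rm sl}(\rho_E)-i_0/p^r=j+({\rm sl}(\chi)-l-i_0)/p^r<j\le{\rm sl}({\rm Ad}\,\rho_E)$. So the black box you rely on is itself defective, for two reasons. First, Corollary~\ref{cyclic phi} puts the breaks of $\phi$ at the upper jumps $j_k$ instead of the lower jumps $l_k=\psi(j_k)$, so Lemma~\ref{large slope} does not give ${\rm sl}(\chi)-i_0\ge l$. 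Second, the appeal to Proposition~\ref{slope-induction} overlooks ${\rm sl}({\rm Ind}\,\mathbf 1)=j$. Using ${\rm sl}({\rm Ind}_{K/E}\theta)=\max\{j,\phi_{K/E}({\rm sl}\,\theta)\}$ instead, the same computation gives ${\rm sl}({\rm Ad}\,\rho_E)=\max\{j,\,{\rm sl}(\rho_E)-i_0/p^r\}$. Your reduction therefore proves the stated formula only under the extra hypothesis ${\rm sl}(\chi)\ge l+i_0$, and since the statement fails outside that range, no argument can prove it without such a hypothesis.
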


	Since epipelagic representations form an important subclass of Carayol representations, let us write the slope of adjoint of an epipelagic representation as a corollary to Theorems \ref{int thm slope adjoint tame} and \ref{int thm slope carayol}.
	\begin{corollary}\label{cor slope epipelagic}
		Let $F$ be a $p$-adic field of residue characteristic $p\neq 2$. Let $\rho$ be an epipelagic representation of $\mathcal{W}_F$. 
		\begin{enumerate}
			\item[{\rm(1)}] If $m>1$, then ${\rm sl}(\rho\otimes \rho^{\vee})=\dfrac{1}{\dim\,\rho}$.
			\item[{\rm(2)}]  If $m=1$, then
			\[{\rm sl}(\rho\otimes \rho^{\vee})=\dfrac{1-\phi_{K/F}(i_0)}{\dim\,\rho},\] where $i_0$ is the smallest nonzero lower jump in ${\Gal}(K/F)$ for the finite Galois extension $K/F$.     
		\end{enumerate}
	\end{corollary}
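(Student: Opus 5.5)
The corollary follows from Theorems \ref{int thm slope adjoint tame} and \ref{int thm slope carayol} once we pin down the slope of an epipelagic representation. Write $\dim\rho=mp^r$ with $(p,m)=1$, as in Theorem \ref{int thm slope adjoint tame}.

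\emph{Slope of $\rho$.} An epipelagic $\rho$ has ${\rm Sw}(\rho)=1$, so $(\Sw(\rho),\dim\rho)=1$ and $\rho$ is Carayol. In particular $\rho$ is irreducible; in fact it is tamely irreducible by \cite[Proposition 7.2]{kil19}. For an irreducible representation the slope and the Swan conductor are related by
\[\slope(\rho)=\frac{\Sw(\rho)}{\dim\rho}\]
(\cite[\S(3.2.1)]{bus-hen19}, \cite[Definition 3.1]{kil19}). The reason is that an irreducible $\rho$ has a single ramification break on the wild inertia, so the Swan conductor is $\dim\rho$ times that break. This gives $\slope(\rho)=1/\dim\rho$.

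\emph{Case $m>1$.} Theorem \ref{int thm slope adjoint tame} gives $\slope(\rho\otimes\rho^\vee)=\slope(\rho)=1/\dim\rho$. This is part (1).

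\emph{Case $m=1$.} Now $\dim\rho=p^r$, and $\rho$ is a Carayol representation of dimension $p^r$. Theorem \ref{int thm slope carayol} therefore applies, with $K/F$ the finite Galois extension attached to $\rho$ in Section \ref{sec slope gen} and $i_0$ its smallest nonzero lower jump. It gives
\[\slope(\rho\otimes\rho^\vee)=\frac{1}{\dim\rho}-\frac{\phi_{K/F}(i_0)}{\dim\rho}=\frac{1-\phi_{K/F}(i_0)}{\dim\rho},\]
which is part (2).

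The only step needing care is the identity $\slope(\rho)=\Sw(\rho)/\dim\rho$ for irreducible $\rho$. If the paper's definition of slope is the largest upper break at which $\rho$ is nontrivial, this identity is the standard fact that an irreducible representation has a unique break. Everything else is substitution.

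One remark: the degenerate case $r=0$ (so $\dim\rho=1$) is covered by the same formula, provided Theorem \ref{int thm slope carayol} is read with the convention that the correction term vanishes there. Then $\rho\otimes\rho^\vee$ is trivial, and the formula should be checked to return slope $0$ under that reading.
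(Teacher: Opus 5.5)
Your argument is the one the paper intends: $\Sw(\rho)=1$ makes $\rho$ Carayol with $\slope(\rho)=\Sw(\rho)/\dim\rho=1/\dim\rho$, and parts (1) and (2) then follow by substituting this into Theorems \ref{int thm slope adjoint tame} and \ref{int thm slope carayol}. The paper cites \cite[Theorem 3.5]{hen80} for the identity $\slope=\Sw/\dim$, which your normality argument also justifies; the one thing to correct is your closing remark, since if the correction term is taken to vanish when $\dim\rho=1$, the formula gives $1$ rather than $0$, so that degenerate case must be excluded from (2) rather than absorbed by a convention.
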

	
	Let us briefly mention the contents of the article. In section \ref{sec slope}, we define the notion of the slope of a Galois representation and prove some basic properties of slope. In Section \ref{sec herbrand}, we describe some generalities on Herbrand functions. In Section \ref{sec slope wild}, we prove Theorems \ref{int thm slope adjoint wild} and \ref{int thm slope adjoint any wild}. Finally in Section \ref{sec slope gen}, we establish the slope formula in general (the proof of Theorems \ref{int thm slope adjoint tame} and \ref{int thm slope carayol}).

    \section{Slope of a Galois representation}\label{sec slope}
	We start with the definition of the slope of a Weil representation. Let $F$ be a  $p$-adic field of residue characteristic $p\neq 2$ and let $\mathcal{W}_F$ denote the associated Weil group. Let $\{\mathcal{W}_F^r\}_{r\in \mathbb{R}_{\geq -1}}$ denote the upper ramification filtration of $\mathcal{W}_F$ with $\mathcal{W}_F^{-1}=\mathcal{W}_F$ and $\mathcal{W}_F^0=\mathcal{I}_F$ denote the inertia subgroup of $\mathcal{W}_F$. For $r\geq 0$, 
	the subgroups $\mathcal{W}_F^{r}$ 
	are profinite, closed and normal in $\mathcal{W}_F$.
	Let 
	$\mathcal{P}_F$ denote the 
	wild inertia 
	of $\mathcal{W}_F$.
	
	Let $\rho$ be a finite dimensional smooth representation of $\mathcal{W}_F$. There exists a finite Galois extension $E$ of $F$ such that $\rho\vert_{\mathcal{W}_F^0}$ factors through the inertia subgroup $G^0$ of $G:={\rm Gal}(E/F)$. Let $\{G_i\}_{i\in \mathbb{Z}_{\geq -1 }}$ and $\{G^r\}_{r\in \mathbb{R}_{\geq -1}}$ be the lower and upper ramification filtrations of $G$ \cite[Chapter IV]{ser79}. We say that $G$ has a lower jump at $l\geq -1$ if $G_{l} \neq G_{l+1}$. Also, we say that $G$ has an
	upper jump at $u\geq -1$ if there exists $\epsilon>0$ such that $G^u \neq G^{u+\epsilon}$ for all $\epsilon>0$.
	\begin{definition}{\rm\cite[(3.2.1)]{bus-hen19}},\cite[Definition 3.1, p. 20]{kil19}\label{slope defn}
		Let $\rho$ be a finite dimensional smooth representation of $\mathcal{W}_F$. Then the slope 
		of $\rho$ is defined by 
		\[{\rm sl}(\rho):= {\rm inf}\{r\in \mathbb{R}_{>0}\mid \rho\big\vert_{\mathcal{W}_F^r}=1\}.\]
	\end{definition}
	In this connection, let us recall the definition of the Swan conductor of the representation $\rho$.
	\begin{definition}\label{def swan}
		Let $(\rho,V)$ be a finite dimensional smooth representation of $\mathcal{W}_F$. Then the Swan conductor
		of $\rho$ is defined by 
		\[{\rm Sw}(\rho):=\int_{0}^\infty\,{\rm dim}\, (V/{V^u})\,du,\]
		where $V^u$ denotes the subspace of $\mathcal W_F^u$-invariant vectors.
	\end{definition}
	For any representation $\rho$ of $\mathcal{W}_F$, we have ${\rm sl}(\rho)\geq \frac{{\rm Sw}(\rho)}{\dim \rho}$. In particular, when $\rho$ is irreducible, it follows that ${\rm Sw}(\rho)={\rm sl}(\rho)\cdot {\rm dim}\, (\rho)$ \cite[Theorem 3.5]{hen80}. 
	
	We recall the definition of the Herbrand function.

	
	\begin{definition}\rm{\cite[\S IV.3]{ser79}}\label{Herbrand}
		Let $E$ be a finite extension of $F$ with ramification index $e(E/F)$. The Herbrand function $\phi_{E/F}:\mathbb{R}_{\geq 0}\rightarrow \mathbb{R}_{\geq 0}$ associated to $E/F$ is given by
		\[\phi_{E/F}(t)=1/{e(E/F)}\,\int_{0}^{t}\mid G_x\mid dx.\]
	\end{definition}

    Now we define conjugate representation. Let $E/F$ be a finite Galois extension and $\tau$ be a representation of $\mc W_E$. Then $W_E$ is a normal subgroup of $\mc W_F$ and $\mc W_F/\mc W_E\simeq {\rm Gal}(E/F)$. Thus every $\lambda\in {\rm Gal}(E/F)$ has a lift in $\mc W_F$, which we again denote by $\lambda$. Fixing such a $\lambda$, we define a conjugate representation $\tau^{\lambda}$ of $\mc W_E$ by \[\tau^{\lambda}(x)=\tau(\lambda x \lambda^{-1})\] for $x\in \mc W_E$. One can check that the slope of a Galois representation is invariant under Galois conjugate and its dual. Thus for a finite Galois extension $E/F$ and  an irreducible representation $\tau$ of $\mathcal{W}_E$, we have
       \[{\rm sl}(\tau)={\rm sl}(\tau^{\lambda})={\rm sl}((\tau^{\lambda})^{\vee})={\rm sl}(\tau^{\vee}) \]
    for $\lambda\in{\rm Gal}(E/F)$.

    The following proposition says that the slopes of the induced and inducing representations are related by the associated Herbrand function.
    \begin{proposition}{\rm \cite[Proposition 2]{mp19}}\label{slope-induction}
       Let $F\subset E \subset K$ be finite Galois extensions with $G(K/F):={\rm Gal}(K/F)$ and $G(K/E):={\rm Gal}(K/E)$. Let $\tau$ be a representation of $G(K/E)$. Then 
       \[{\rm sl}({\rm Ind}_{G(K/E)}^{G(K/F)}\tau)=\phi_{E/F}({\rm sl}(\tau)),\]
       where $\phi_{E/F}$ is the Herbrand function associated to the extension $E/F$.
    \end{proposition}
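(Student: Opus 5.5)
The plan is to reduce the statement to two standard facts from \cite[Chapter IV]{ser79}: the behaviour of the lower numbering under passage to a subgroup, and the transitivity of Herbrand functions. Write $G:=G(K/F)$ and $H:=G(K/E)$. Since $E/F$ is Galois, $H$ is normal in $G$. The ramification groups $G^v$ are also normal in $G$.

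\emph{Step 1: reduce to a statement about $\tau$.} By Mackey's formula applied to the normal subgroup $G^v$,
\[({\rm Ind}_H^G\tau)\big\vert_{G^v}\;\cong\;\bigoplus_{g\in G^v\backslash G/H}{\rm Ind}_{G^v\cap H}^{G^v}\big(\tau^g\big\vert_{G^v\cap H}\big).\]
Because $H$ is normal, $g\,(G^v\cap H)\,g^{-1}=G^v\cap H$. Hence $\tau^g$ is trivial on $G^v\cap H$ exactly when $\tau$ is. If $\tau$ is nontrivial on $G^v\cap H$, then some summand contains a nontrivial constituent, so ${\rm Ind}\,\tau$ is nontrivial on $G^v$. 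Conversely, suppose $\tau$ is trivial on $G^v\cap H$. Then $G^v$ acts on functions in ${\rm Ind}\,\tau$ by $f\mapsto f(\,\cdot\,x)$, and $f(yx)=f(yxy^{-1}y)=\tau(yxy^{-1})f(y)=f(y)$, using $yxy^{-1}\in G^v\cap H$ whenever $x\in G^v\cap H$. Thus ${\rm Ind}\,\tau$ is trivial on $G^v\cap H$. A finer version of the same computation, using normality of $G^v$, shows it is trivial on all of $G^v$ provided $G^v\subseteq H$. This holds in the range that matters: ${\rm sl}(\tau)>0$ and the relevant $v$ lie beyond the top upper jump of $E/F$, because $G^vH/H=(G/H)^v$ by Herbrand's theorem. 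I would record this range condition explicitly in the proof.

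\emph{Step 2: identify $G^v\cap H$.} By \cite[IV, Prop. 2]{ser79} we have $H_x=G_x\cap H$ in lower numbering. Combining $G^v=G_{\psi_{K/F}(v)}$ with the transitivity $\psi_{K/F}=\psi_{K/E}\circ\psi_{E/F}$ \cite[IV, Prop. 15]{ser79} gives
\[G^{\phi_{E/F}(w)}\cap H=G_{\psi_{K/E}(w)}\cap H=H_{\psi_{K/E}(w)}=H^w.\]

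\emph{Step 3: conclude.} By Steps 1 and 2, ${\rm Ind}\,\tau$ is trivial on $G^{\phi_{E/F}(w)}$ if and only if $\tau$ is trivial on $H^w$. The map $\phi_{E/F}$ is a continuous increasing bijection of $\mathbb{R}_{\geq0}$. Taking infima over $w$ in Definition~\ref{slope defn} therefore yields ${\rm sl}({\rm Ind}\,\tau)=\phi_{E/F}({\rm sl}(\tau))$.

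The main obstacle is the "if" direction of Step 1, namely showing that triviality of $\tau$ on $G^v\cap H$ forces triviality of the induced representation on all of $G^v$. Here I would use $G^v\subseteq H$ for $v$ near the slope, justified via $(G/H)^v=G^vH/H$. This is exactly where Herbrand's upper-numbering compatibility for quotients must be invoked carefully.
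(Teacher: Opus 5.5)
There is a genuine gap, exactly at the point you flagged, and it cannot be closed from the stated hypotheses. Herbrand's theorem $(G/H)^v=G^vH/H$ only converts the inclusion $G^v\subseteq H$ into the condition $v>u$, where $u$ is the largest upper ramification jump of ${\rm Gal}(E/F)$ (put $u=0$ if $E/F$ is tamely ramified). Nothing in the hypotheses forces $\phi_{E/F}({\rm sl}(\tau))\geq u$, so your claim that ``the relevant $v$ lie beyond the top upper jump of $E/F$'' is unsupported.

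What your Steps 1 and 2 actually prove is the following. Since $\ker({\rm Ind}\,\tau)=\bigcap_{g\in G}\ker(\tau^g)\subseteq H$ and $G^v$ is normal, ${\rm Ind}\,\tau$ is trivial on $G^v$ if and only if $G^v\subseteq H$ and $\tau$ is trivial on $G^v\cap H=H^{\psi_{E/F}(v)}$. Hence
\[
{\rm sl}\big({\rm Ind}_{G(K/E)}^{G(K/F)}\tau\big)=\max\{u,\ \phi_{E/F}({\rm sl}(\tau))\}.
\]
The maximum genuinely matters. Take $\tau$ trivial and $E/F$ totally ramified cyclic of degree $p$ with ramification jump $t>0$. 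Then ${\rm Ind}\,\tau$ is the regular representation of ${\rm Gal}(E/F)$, of slope $t$, whereas $\phi_{E/F}({\rm sl}(\tau))=\phi_{E/F}(0)=0$. Positivity of ${\rm sl}(\tau)$ does not rescue it either. If $t\geq 2$ and $\tau$ is the restriction to $\mathcal{W}_E$ of a character of $\mathcal{W}_F$ of slope $s\in(0,t)$, then ${\rm sl}(\tau)=s=\phi_{E/F}(s)$, but ${\rm sl}({\rm Ind}\,\tau)=t$. So the statement as transcribed is missing a hypothesis, and no argument can prove it as written. (The paper gives no proof, only the citation \cite{mp19}.)

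The natural extra hypothesis is that ${\rm Ind}\,\tau$ be irreducible. This is the situation of Lemma~\ref{lem slope lower jump}; in the tame case of Lemma~\ref{slope-restriction-induction} one has $u=0$ and nothing extra is needed. Under irreducibility, your range condition can be proved directly. Do not try to get it from Lemma~\ref{lem slope lower jump}, whose proof uses this proposition.

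Suppose $\phi_{E/F}({\rm sl}(\tau))<u$ and put $N=G^u$. Then $N\not\subseteq H$ because $(G/H)^u\neq 1$. On the other hand, $N\cap H=H^{\psi_{E/F}(u)}$ with $\psi_{E/F}(u)>{\rm sl}(\tau)$, and since $N\cap H$ is normal in $G$, every conjugate $\tau^g$ is trivial on it. Your Mackey formula then shows that $({\rm Ind}\,\tau)|_N$ is a sum of copies of ${\rm Ind}_{N\cap H}^N 1$. So the $N$-fixed vectors form a nonzero subspace, which is $G$-stable because $N$ is normal, hence is everything by irreducibility. Thus $N\subseteq\ker({\rm Ind}\,\tau)\subseteq H$, a contradiction.

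Therefore $\phi_{E/F}({\rm sl}(\tau))\geq u$, so $G^{\phi_{E/F}(w)}\subseteq H$ for every $w>{\rm sl}(\tau)$, and your Step 3 goes through verbatim. Incidentally, once $G^v\subseteq H$ no ``finer computation'' is needed, since then $G^v\cap H=G^v$.
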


    The behavior of the slope of a representation under the tame induction and restriction is given in the next lemma.
    \begin{lemma}\label{slope-restriction-induction}
       Let $E/F$ be a finite totally tamely ramified Galois extension with ramification index $e$. Let $\sigma$ and $\tau$ be finite dimensional representations of $\mathcal{W}_F$ and $\mathcal{W}_E$ respectively, then
          \begin{enumerate}
	        \item[\rm(1)] ${\rm sl}(\sigma\mid_{{\mathcal{W}_E}})=e\cdot {\rm sl}(\sigma),$
	        \item[\rm(2)] ${\rm sl}({\rm Ind}_{E/F}\tau)=\dfrac{1}{e}\cdot{\rm sl}(\tau).$
          \end{enumerate} 
    \end{lemma}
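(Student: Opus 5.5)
The plan is to work entirely with the upper ramification filtration and the Herbrand functions. The key input is that for a totally tamely ramified extension $E/F$ of index $e$, the upper filtrations of $\mathcal{W}_E$ and $\mathcal{W}_F$ are related by $\mathcal{W}_E^{eu}=\mathcal{W}_F^{u}$ for every $u>0$. This can be seen as follows. Take a finite Galois extension $L/F$ containing $E$. Herbrand's theorem gives
\[
\mathrm{Gal}(L/E)^{\psi_{E/F}(u)}=\mathrm{Gal}(L/E)\cap \mathrm{Gal}(L/F)^{u}.
\]
Since $E/F$ is tame with ramification index $e$, the lower filtration of $\mathrm{Gal}(E/F)$ is trivial beyond index $0$. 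Hence $\phi_{E/F}(t)=t/e$ and $\psi_{E/F}(u)=eu$ for $u\ge 0$. Moreover, for $u>0$ the group $\mathrm{Gal}(L/F)^u$ lies inside wild inertia, and wild inertia is contained in $\mathrm{Gal}(L/E)$ because $E/F$ is tame. So the intersection with $\mathrm{Gal}(L/E)$ is superfluous, and $\mathrm{Gal}(L/E)^{eu}=\mathrm{Gal}(L/F)^u$. Passing to the limit over $L$ and using compatibility with quotients gives $\mathcal{W}_E^{eu}=\mathcal{W}_F^u$ for $u>0$.

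For part (1), the identification above gives
\[
\sigma|_{\mathcal{W}_E} \text{ is trivial on } \mathcal{W}_E^{r} \iff \sigma \text{ is trivial on } \mathcal{W}_F^{r/e},
\]
for $r>0$. Taking the infimum over such $r$ in Definition~\ref{slope defn} yields ${\rm sl}(\sigma|_{\mathcal{W}_E})=e\,{\rm sl}(\sigma)$.

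For part (2), I would use Proposition~\ref{slope-induction} with $\phi_{E/F}(t)=t/e$, which gives ${\rm sl}({\rm Ind}_{E/F}\tau)=\frac{1}{e}{\rm sl}(\tau)$ directly. To stay at the level of $\mathcal{W}$ rather than finite Galois groups, choose a finite Galois $K/F$ containing $E$ with $\tau|_{\mathcal{W}_K^0}$ trivial. The slopes in question only see positive-depth ramification, so the proposition applies after reducing to $\mathrm{Gal}(K/F)$.

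Alternatively, part (2) can be proved directly. The subgroup $\mathcal{W}_F^u$, for $u>0$, is normal in $\mathcal{W}_F$ and contained in $\mathcal{W}_E$. By Mackey's formula, the restriction of ${\rm Ind}_{E/F}\tau$ to $\mathcal{W}_F^u$ is the sum of the conjugates $\tau^{\lambda}|_{\mathcal{W}_F^u}$. Since $\mathcal{W}_F^u=\mathcal{W}_E^{eu}$ is normal, this restriction is trivial exactly when $\tau$ is trivial on $\mathcal{W}_E^{eu}$, using that each conjugate of $\mathcal{W}_E^{eu}$ equals it. Taking infima again gives the formula.

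The main obstacle is the edge behaviour near $u=0$ and justifying the identification $\mathcal{W}_E^{eu}=\mathcal{W}_F^{u}$ at the level of Weil groups. In particular, one needs that the upper numbering is compatible with passing to infinite extensions, which is handled via the finite quotients. Since the definition of slope takes an infimum over $r>0$, only strictly positive $u$ matter, so the tame part of the filtration causes no trouble.
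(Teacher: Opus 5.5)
Your proposal is correct and follows essentially the paper's route. Part (1) goes through the identity $\mathcal{W}_F^{u}=\mathcal{W}_E^{eu}$ for $u>0$, which the paper cites from Bushnell--Henniart and you derive from the subgroup compatibility $H_x=G_x\cap H$, and part (2) applies Proposition~\ref{slope-induction} with $\phi_{E/F}(x)=x/e$; your alternative Mackey argument for (2) is also valid but not needed.
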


    \begin{proof}
     We denote $G=\mathcal{W}_F$ and $H=\mathcal{W}_E$. Then  $G/H\simeq {\rm Gal}(E/F)$, the Galois group of $E/F$. Since $E/F$ is finite and tamely ramified, we have $G^u=H^{eu}$ for all $u\in \mathbb{R}_{>0}$ (cf. \cite[Lemma 1]{bus-hen17b}). 

     We first prove $(1)$. Let ${\rm sl}(\sigma)=u$. Then $\sigma\mid_{H^{eu+e\epsilon}}=\sigma\mid_{G^{u+\epsilon}}=1$ for all $\epsilon>0$. This shows that ${\rm sl}(\sigma\mid_H)\leq eu=e\cdot {\rm sl}(\sigma)$. Conversely, let ${\rm sl}(\sigma\mid_H)=u$. For any $\epsilon>0,$ we have
     $ \sigma\mid_{G^{u/e+\epsilon/e}} = \sigma\mid_{H^{u+\epsilon}} = 1$, which gives ${\rm sl}(\sigma)\leq u/e$, i.e., $e\cdot{\rm sl}(\sigma)\leq u = {\rm sl}(\sigma\mid_H)$.
     Thus $(1)$ follows.
  
     The proof of $(2)$ follows from Proposition \ref{slope-induction}, by observing that $\phi_{E/F}(x)=x/e$ for a finite tamely ramified extension $E/F$.
    \end{proof}


    The following lemma describes the behavior of the slope of the adjoint representation under the tame restriction. 
       \begin{lemma}\label{tame restriction formula}
        Let $E/F$ be a finite tamely ramified extension with the ramification index $e$. Let $\tau$ be a representation of $\mathcal{W}_F$. Then ${\rm sl}(\tau\otimes \tau^{\vee})=1/e\cdot {\rm sl}(\tau\mid_{\mathcal{W}_E}\otimes (\tau\mid_{\mathcal{W}_E})^{\vee})$.
       \end{lemma}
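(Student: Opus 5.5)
The idea is to reduce the statement to Lemma \ref{slope-restriction-induction}(1), applied to the single representation $\tau\otimes\tau^{\vee}$ of $\mathcal{W}_F$. Restriction to a subgroup commutes with tensor products and with taking duals, so
\[
(\tau\otimes\tau^{\vee})\big\vert_{\mathcal{W}_E}\;=\;\tau\big\vert_{\mathcal{W}_E}\otimes\bigl(\tau\big\vert_{\mathcal{W}_E}\bigr)^{\vee}.
\]
Applying part (1) of Lemma \ref{slope-restriction-induction} with $\sigma=\tau\otimes\tau^{\vee}$ then gives ${\rm sl}\bigl(\tau\vert_{\mathcal{W}_E}\otimes(\tau\vert_{\mathcal{W}_E})^{\vee}\bigr)=e\cdot{\rm sl}(\tau\otimes\tau^{\vee})$. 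Dividing by $e$ yields the claim.

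The main point to check is that Lemma \ref{slope-restriction-induction}(1) applies. As stated, that lemma assumes $E/F$ is totally tamely ramified and Galois, while here $E/F$ is only tamely ramified. I will handle this in one of two ways.

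\textbf{Direct argument.} Redo the proof of (1) using only the identity $\mathcal{W}_F^u=\mathcal{W}_E^{eu}$ for $u>0$. This identity holds for any finite tamely ramified $E/F$, Galois or not, since the Herbrand function is then $\phi_{E/F}(x)=x/e$ (cf.\ \cite[Lemma 1]{bus-hen17b}). Note that the unramified part does not change the positive-index filtration. The two inequalities in the proof of (1) then go through verbatim with $\sigma=\tau\otimes\tau^{\vee}$.

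\textbf{Factoring the extension.} Alternatively, write $F\subset E_0\subset E$ with $E_0/F$ unramified and $E/E_0$ totally tamely ramified. Restriction to $\mathcal{W}_{E_0}$ preserves slopes, because $\mathcal{W}_F^u=\mathcal{W}_{E_0}^u$ for $u>0$. If needed, pass to a Galois closure, which is still tame, to fall under the lemma's hypotheses, and use multiplicativity of ramification indices.

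Either way, the computation itself is trivial. The only care required is this applicability remark about the filtration identity for tame extensions.
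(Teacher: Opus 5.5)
Your proof is correct and is exactly the paper's argument: restriction commutes with tensor products and duals, so you apply Lemma~\ref{slope-restriction-induction}(1) to $\sigma=\tau\otimes\tau^{\vee}$ and divide by $e$. Your extra check that $\mathcal{W}_F^u=\mathcal{W}_E^{eu}$ for $u>0$ holds for any finite tame $E/F$ (because $\mathcal{P}_F\subseteq\mathcal{W}_E$ and $\psi_{E/F}(u)=eu$) correctly covers the hypothesis mismatch that the paper passes over; the Galois-closure variant is unnecessary and would need an extra unramified step, since the Galois closure of a totally tamely ramified extension need not be totally ramified.
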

       \begin{proof}
          The proof follows from Lemma \ref{slope-restriction-induction}.
       \end{proof}

       \begin{lemma}\label{lem slope lower jump}
           Let $E/F$ be a finite Galois extension. Let $\rho={\rm Ind}_{E/F}\chi$ be an irreducible representation of $\mc W_F$ where $\chi$ be a character of $\mc W_E$. 
        Then the largest lower jump in ${\Gal}(E/F)$ is bounded above by ${\rm sl}(\chi)$.
        \end{lemma}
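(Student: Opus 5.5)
We argue by contradiction using Mackey theory together with the irreducibility of $\rho$. Write $G={\rm Gal}(E/F)$. We may replace $E$ by its Galois closure if necessary, but since the statement says $E/F$ is Galois, $\mc W_E$ is normal in $\mc W_F$.

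\textbf{Step 1 (Mackey).} Since $\mc W_E$ is normal, Mackey's formula gives
\[\rho\big\vert_{\mc W_E}\cong\bigoplus_{\lambda\in G}\chi^{\lambda}.\]
Because $\rho$ is irreducible, Mackey's irreducibility criterion forces $\chi^{\lambda}\neq\chi$ for every $\lambda\neq 1$ in $G$. Each conjugate has the same slope, ${\rm sl}(\chi^\lambda)={\rm sl}(\chi)$, as recalled in Section \ref{sec slope}.

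\textbf{Step 2 (where $\chi$ is trivial).} Set $s={\rm sl}(\chi)$, and fix the largest lower jump $l$, so that $G_l\neq 1$ and $G_{l+1}=1$. Suppose, for contradiction, that $l>s$. Transfer the problem to the upper filtration of $\mc W_E$ using the compatibility of the lower numbering of $G$ with the upper filtration of $\mc W_E$. The lower filtration $G_x$ is the image in $G$ of the ramification subgroups of $\mc W_F$ with index $\phi_{E/F}(x)$; equivalently, $G_x$ is the image of $\mc W_F^{\phi_{E/F}(x)}$, and $\mc W_F^{\phi_{E/F}(x)}\cap\mc W_E=\mc W_E^{x}$ by Herbrand's theorem. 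Since $\chi$ is trivial on $\mc W_E^{y}$ for all $y>s$, the character $\chi$ is trivial on $\mc W_E^{y}$ for $s<y\le l$.

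\textbf{Step 3 (conjugation).} Take $\lambda\in G_l$ with $\lambda\neq1$, and lift it to $g\in\mc W_F^{\phi_{E/F}(l)}$. We compare $\chi^{g}$ with $\chi$. For $x\in\mc W_E$, we have $\chi^{g}(x)\chi(x)^{-1}=\chi(gxg^{-1}x^{-1})$. The key estimate is that commutators $[g,x]$, with $g$ in the ramification group of lower index $l$ (relative to $E/F$) and $x\in\mc W_E$, land deep enough in $\mc W_E$ that $\chi$ kills them. Specifically, we aim to show $[g,x]\in\mc W_E^{y}$ for some $y>s$. If this holds, then $\chi^{\lambda}=\chi$, contradicting Step 1, and hence $l\le s$.

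\textbf{Main obstacle.} The commutator estimate in Step 3 is the expected difficulty. The first approach is to work in a finite quotient ${\rm Gal}(L/F)$ through which $\chi$ factors, with $L/E$ abelian. There, $\chi$ corresponds via local class field theory to a character of $E^\times$ trivial on $U_E^{\lfloor s\rfloor+1}$. The conjugation action of $\lambda$ on ${\rm Gal}(L/E)$ corresponds to the Galois action on $E^\times$, and $\chi^\lambda\chi^{-1}$ corresponds to $\chi\circ(\lambda-1)$. For $\lambda\in G_l$ we have $\lambda(a)/a\in U_E^{l}$, more precisely in $U_E^{l+v_E(a)}$-type level. Applying this to a uniformizer and to units, the character $\chi(\lambda(a)/a)$ is trivial once $l>s$. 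This yields the contradiction directly and avoids the nonabelian commutator computation. The remaining care is the boundary case of integrality of $s$ (the slope versus the conductor exponent $\lfloor s\rfloor+1$), which should be handled by the strict inequality $l>s$.
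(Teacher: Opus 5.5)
Your proof is correct, but it takes a different route from the paper.

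\textbf{The paper's route.} The paper never compares $\chi^\lambda$ with $\chi$. It passes to a finite Galois $K/F$ through which $\rho$ factors and writes $\mathrm{sl}(\rho)=\phi_{K/F}(m)$ and $\mathrm{sl}(\chi)=\phi_{K/E}(n)$, where $m,n$ are the largest lower jumps of $\mathrm{Gal}(K/F)$ and $\mathrm{Gal}(K/E)$. The induction formula of Proposition~\ref{slope-induction} then gives $\phi_{K/F}(m)=\phi_{K/F}(n)$, hence $m=n$. Finally Herbrand's quotient theorem turns $\mathrm{Gal}(K/F)_{n+\delta}=1$ into $\mathrm{Gal}(E/F)_{\mathrm{sl}(\chi)+\epsilon}=1$.

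\textbf{Your route.} You combine Mackey's criterion ($\chi^\lambda\neq\chi$ for $\lambda\neq 1$) with local class field theory. If $l>s$, then $l\geq 1$ and $\chi$ is trivial on $U_E^{l}$, because $\chi$ kills $\mathcal{W}_E^{l}$. Every $\lambda\in G_l$ satisfies $\lambda(a)/a\in U_E^{l}$ for all $a\in E^\times$. Hence $\chi^\lambda\chi^{-1}=\chi\circ(\lambda-1)=1$, contradicting Mackey.

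\textbf{What each buys.} Your argument is elementary and shows exactly where irreducibility is used. In the paper that hypothesis is hidden inside Proposition~\ref{slope-induction}, which cannot hold for arbitrary inductions. For $\tau$ trivial and $E/F$ wildly ramified, $\mathrm{sl}(\mathrm{Ind}_{E/F}\tau)$ is the largest upper jump of $E/F$, not $\phi_{E/F}(0)=0$. For $E/F$ Galois, $\mathrm{sl}(\mathrm{Ind}_{E/F}\tau)$ is in general the maximum of that upper jump and $\phi_{E/F}(\mathrm{sl}(\tau))$. The present lemma is exactly what makes the second term dominate. In exchange, the paper's route avoids class field theory and applies verbatim to higher-dimensional inducing representations, whereas yours as written is specific to characters.

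\textbf{Two cosmetic fixes.}
\begin{enumerate}
\item The phrase ``$U_E^{l+v_E(a)}$-type level'' should read $v_E(\lambda(a)-a)\geq l+v_E(a)$, i.e.\ $\lambda(a)/a\in U_E^{l}$.
\item Steps 2--3 can be dropped. The image of the commutator $[g,x]$ in $\mathcal{W}_E^{\mathrm{ab}}$ is the Artin image of $\lambda(a)/a$, where $x$ corresponds to $a$. So your class-field-theory computation is precisely the commutator estimate you were looking for.
\end{enumerate}
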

        \begin{proof}
           Since $\rho={\rm Ind}_{E/F}\chi$, there exists a finite Galois extension $K/F$ such that $\rho={\rm Ind}_H^G\chi$ where $G={\Gal}(K/F)$ and $H={\Gal}(K/E)$. Then we have ${\rm sl}(\rho)=\phi_{K/F}(m)$ and ${\rm sl}(\chi)=\phi_{K/E}(n)$ where $m$ and $n$ are the largest lower jumps in $G$ and $H$ respectively \cite[\S 2.1.2, p. 156]{hen-oi24}. Taking $\tau = \chi$ in Proposition \ref{slope-induction}, we have
           $$\phi_{K/F}(m)={\rm sl}(\rho)=\phi_{E/F}({\rm sl}(\chi))=\phi_{E/F}(\phi_{K/E}(n))=\phi_{K/F}(n).$$
           This gives $m=n$. By \cite[Lemma 5]{ser79} we have $${\Gal}(E/F)_{\phi_{K/E}(n)}=(G/H)_{\phi_{K/E}(n)}=G_nH/H.$$  
           For $\epsilon>0$ arbitrary, we have 
           $${\Gal}(E/F)_{\phi_{K/E}(n)+\epsilon}={\Gal}(E/F)_{\phi_{K/E}(n+\delta)}=G_{n+\delta}H/H=G_{m+\delta}H/H=1.$$
           Thus the largest lower jump of $G(E/F)$ must be less than or equal to $\phi_{K/E}(n) = {\rm sl}(\chi)$.
        \end{proof}

\section{Herbrand functions}\label{sec herbrand}
Let $E/F$ be a finite separable extension. Suppose $E/F$ is Galois, then the function $\psi_{E/F}: [0, \infty)\rightarrow [0, \infty)$ is defined by (cf. \cite[Chapter IV, p. 74]{ser79}):
\[\psi_{E/F}(x):=\int_{0}^{x}[G^0: G^t]~dt.\]
If $E/F$ is not Galois, take a Galois extension $E_1/F$ with $E\subset E_1,$ define (cf. \cite[(1.1.1)]{bus-hen19})
\[\psi_{E/F}=\psi_{E_1/E}^{-1}\circ \psi_{E_1/F}.\]
Then $\psi_{E/F}$ is well-defined. Indeed, let $E_2/F$ be an another Galois extension with $E\subset E_2$. 
We denote $\psi_{E/F}'=\psi_{E_2/E}^{-1}\circ \psi_{E_2/F}$. Since the composition of 
Galois extensions is Galois and $E_1E_2\supset E$, we have
$\psi_{E/F}''=\psi_{E_1E_2/E}^{-1}\circ \psi_{E_1E_2/F}$, which by using the fact that $\psi_{E_1E_2/E}=\psi_{E_1E_2/E_1}\circ\psi_{E_1/E}$ and $\psi_{E_1E_2/F}=\psi_{E_1E_2/E_1}\circ \psi_{E_1/F}$, equals
$\psi_{E/F}$. Again, using $\psi_{E_1E_2/E}=\psi_{E_1E_2/E_2}\circ\psi_{E_2/E}$ and $\psi_{E_1E_2/F}=\psi_{E_1E_2/E_2}\circ \psi_{E_2/F}$, we have $\psi_{E/F}''=\psi_{E/F}'$. Thus we have $\psi_{E/F}=\psi_{E/F}''=\psi_{E/F}'$.

\begin{definition}{\rm \label{psi jump}\cite[Definition, p. 1968]{bus-hen19}}
Let $E/F$ be a finite separable extension. We say that $a\in \mathbb{R}_{>0}$ is a jump of $\psi_{E/F}$, if the derivative $\psi_{E/F}'$ is not continuous at $a$.
\end{definition}

Following \cite[p. 1971]{bus-hen19}, for any $x>0$ we define
\begin{eqnarray}\label{psi-jump}
w_x(E/F):=\lim_{\epsilon\to 0}\dfrac{\psi_{E/F}'(x+\epsilon)}{\psi_{E/F}'(x-\epsilon)}.
\end{eqnarray}

\begin{remark}\label{psi-jump-condition}
Note that $x\in \mathbb{R}_{>0}$ is a jump of $\psi_{E/F}$ if and only if $w_x(E/F)> 1$.
\end{remark}

Although it should be well-understood, in the following lemma, we prove that 
jumps in a cyclic Galois group of order $p^r$ coincide with that of the associated Herbrand function.
\begin{lemma}\label{same set of jumps}
Let $E/F$ be a cyclic Galois extension of degree $p^r$ and let $G:={\rm Gal}(E/F)$.
The set of jumps of $\psi_{E/F}$ is same as the set of upper jumps in 
$G$ as defined in \S\ref{sec slope}. 
\end{lemma}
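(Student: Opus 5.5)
We compare the derivative of $\psi_{E/F}$ with the upper filtration $\{G^t\}$. By definition, $\psi_{E/F}'(t)=[G^0:G^t]$ at every $t$ where the integrand is continuous. The function $t\mapsto G^t$ is decreasing and left-continuous in $t$: $G^u=G^{\psi(u)}$ corresponds to the lower numbering, which is left-continuous and piecewise constant. So $t\mapsto [G^0:G^t]$ is a non-decreasing step function with finitely many steps. For $x>0$ and small $\epsilon>0$ we therefore get
\[
\psi_{E/F}'(x-\epsilon)=[G^0:G^x],\qquad \psi_{E/F}'(x+\epsilon)=[G^0:G^{x+\epsilon}].
\]
The first uses left-continuity, i.e. $G^{x-\epsilon}=G^x$ for small $\epsilon$ when $x$ is not an earlier step. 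Taking the ratio,
\[
w_x(E/F)=\lim_{\epsilon\to 0}[G^x:G^{x+\epsilon}].
\]

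The key steps follow in order.

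\textbf{Step 1.} Establish that the upper filtration has only finitely many jumps and is locally constant on the left at each point. This comes from $G^{\phi(u)}=G_u$, where the lower filtration is indexed by integers and $\phi_{E/F}$ is a continuous, strictly increasing, piecewise linear bijection. Consequently, for every $x>0$ there is $\epsilon_0>0$ such that $G^{t}$ is constant for $t\in(x-\epsilon_0,x]$ and also constant for $t\in(x,x+\epsilon_0)$.

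\textbf{Step 2.} Deduce the displayed formula for $w_x(E/F)$. From Step 2, $\psi'$ is discontinuous at $x$ if and only if $[G^0:G^t]$ changes as $t$ crosses $x$, which happens if and only if $G^x\neq G^{x+\epsilon}$ for all small $\epsilon>0$. By Remark~\ref{psi-jump-condition}, this is exactly the statement that $w_x(E/F)>1$.

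\textbf{Step 3.} Match this with the definition of upper jump in \S\ref{sec slope}: $x$ is an upper jump when $G^x\neq G^{x+\epsilon}$ for all $\epsilon>0$. Monotonicity makes "for all small $\epsilon$" equivalent to "for all $\epsilon$". Since Definition~\ref{psi jump} only considers $a>0$, we should also remark that jumps at $x\le 0$ are excluded or irrelevant on both sides. Since $E/F$ is a $p$-extension, if it is totally ramified then $G^0=G$, and $G_0=G_1$ in that case, so no jump occurs at $0$ in the relevant sense.

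The cyclicity hypothesis is not really needed; the argument works for any finite Galois extension. Cyclicity of order $p^r$ may be used only to note that each ratio $[G^x:G^{x+\epsilon}]$ is a power of $p$, so $w_x\in\{1,p,p^2,\dots\}$.

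The main obstacle is the careful handling of one-sided continuity. We need to verify that $\psi'(x-\epsilon)$ really equals the index at $G^x$ rather than at a larger group, which rests on the convention $G^{\phi(u)}=G_u$ with $G_u=G_{\lceil u\rceil}$. We also need to check that the integrand differs from $\psi'$ only on a finite set, so that the limits in $w_x$ are unaffected.
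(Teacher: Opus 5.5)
Your argument is correct, but it takes a different route from the paper.

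\textbf{The paper's route.} It invokes Serre's description of a cyclic extension of degree $p^r$: the upper jumps are integers $j_k=i_0+\dots+i_{k-1}$ with $|G^{j_k}|=p^{r-k+1}$. It then writes $\psi_{E/F}$ down explicitly as the piecewise linear function $p^kx-\sum_{l<k}i_l(p^k-p^l)$ on $[j_k,j_{k+1}]$. From this it reads off $w_{j_k}(E/F)=p$, and $w_x(E/F)=1$ at all other positive $x$.

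\textbf{Your route.} You work directly from $\psi_{E/F}(x)=\int_0^x[G^0:G^t]\,dt$. You use the fact that $t\mapsto G^t=G_{\psi_{E/F}(t)}$ is constant on the half-open intervals $(\phi_{E/F}(i-1),\phi_{E/F}(i)]$. This gives the general identity $w_x(E/F)=[G^x:G^{x+\epsilon}]$ for small $\epsilon>0$. Your argument therefore proves the statement for every finite Galois extension (for positive jumps), with no need for Hasse--Arf or for the cyclic structure.

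\textbf{What each buys.} The paper's explicit computation gives sharper information that is used later, in Corollary \ref{cyclic phi}: there are exactly $r$ jumps, each has $w_{j_k}=p$, and $\psi_{E/F}$ has an explicit formula. Your remark that $w_x$ is a power of $p$ does not recover $w_x=p$ by itself. You would need the extra input that in a cyclic $p$-group each quotient $G_i/G_{i+1}$, $i\ge 1$, has order at most $p$, because it embeds in the elementary abelian group $U_E^i/U_E^{i+1}$.

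\textbf{Slips to fix.}
\begin{itemize}
\item $G^u=G^{\psi(u)}$ should read $G^u=G_{\psi(u)}$.
\item ``From Step 2'' inside Step 2 should be ``From Step 1''.
\item The qualifier ``when $x$ is not an earlier step'' is unnecessary, since left-constancy of $G^t$ holds at every $x>0$.
\item $G_0=G_1$ holds for any $p$-group, whether or not $E/F$ is totally ramified. Total ramification matters only for the upper jump at $-1$ that occurs when $E/F$ is not totally ramified. That jump is invisible to $\psi_{E/F}$, and the paper's proof also tacitly excludes it.
\end{itemize}
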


\begin{proof}
Since $G=\mathbb{Z}/{p^r\mathbb{Z}}$, by \cite[Chapter IV, p. 76]{ser79}, there exist positive integers $i_0,\dots, i_{r-1}$ such that \[J =\{j_k=i_0+\dots+ i_{k-1}\mid 1\leq k \leq r\}\] be the set of all upper jumps in $G$ with $|G^{j_k}|= p^{r-(k-1)}$ for $1\leq k\leq r$. 
For $j_k\leq x\leq j_{k+1}$, a short computation shows that
\[\psi_{E/F}(x)
= p^k x - \sum_{l=0}^{k-1}i_l(p^k - p^l).\]
Then by \eqref{psi jump}, we have
\[w_{j_k}(E/F)=\lim_{\epsilon\to 0}\dfrac{\psi_{E/F}'(j_k+\epsilon)}{\psi_{E/F}'(j_k-\epsilon)}=p^k/p^{k-1}=p>1,\] 
which by Remark \ref{psi-jump-condition} shows that
$\psi_{E/F}$ has a jump at $j_k$. On the other hand, for $j_k < x <j_{k+1}$, by \eqref{psi-jump} one sees that $w_x(E/F) = p^k/p^k =1$ and hence, by Remark \ref{psi-jump-condition} we have $\psi_{E/F}$ has no jump at $x$. 
Thus all jumps of $\psi_{E/F}$ occur at $j_k$ for $1\leq k \leq r$ which are exactly the jumps of $G$.
\end{proof}


The next proposition says that for large values of $x$, the Herbrand function $\psi_{E/F}(x)$ is easier to understand.
\begin{proposition}{\rm \cite[Proposition, p. 1970]{bus-hen19}}\label{psi-for-unique-jump}
Let $E/F$ be a separable and totally wildly ramified extension of degree $p^r$ with the largest upper jump $u$. Let $w_{E/F}$ be the wild exponent of $E/F$. Then, for $x\geq u$, we have
\[\psi_{E/F}(x)= p^rx-w_{E/F}.\]

\end{proposition}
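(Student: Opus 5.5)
The plan is to pass to a Galois closure and prove the formula in two stages. First, $\psi_{E/F}$ is affine of slope $p^r$ on $[u,\infty)$. Second, the intercept is $-w_{E/F}$, which I will read off for very large $x$ from the Galois case together with transitivity of differents. I take $w_{E/F}=d_{E/F}-(e(E/F)-1)$, with $d_{E/F}$ the exponent of the different. Let $E_1/F$ be a finite Galois extension containing $E$, with $G={\rm Gal}(E_1/F)$, $H={\rm Gal}(E_1/E)$ and $e_1=e(E_1/E)$. By the definition in Section \ref{sec herbrand}, $\psi_{E/F}=\psi_{E_1/E}^{-1}\circ\psi_{E_1/F}$, and this is independent of the choice of $E_1$.

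\emph{Step 1 (slope on $[u,\infty)$).} That $u$ is the largest upper jump of $E/F$ means that for $x>u$ the group $G^x$ acts trivially on the $F$-embeddings of $E$, i.e. $G^x\subset H$ (using normality of $G^x$). Fix $x>u$ at which both $\psi_{E_1/F}$ and $\psi_{E_1/E}$ are differentiable at the relevant points, and put $t=\psi_{E_1/F}(x)$. Then $\psi_{E_1/F}'(x)=[G^0:G^x]=[G_0:G_t]$. Since lower numbering is compatible with subgroups, $H_t=G_t\cap H=G^x\cap H=G^x$. Hence
\[
\bigl(\psi_{E_1/E}^{-1}\bigr)'(t)=\frac{1}{[H_0:H_t]}=\frac{|G^x|}{|H_0|}.
\]
The chain rule then gives
\[
\psi_{E/F}'(x)=\frac{|G_0|}{|H_0|}=e(E/F)=p^r .
\]
Since $\psi_{E/F}$ is continuous and piecewise linear, it follows that $\psi_{E/F}(x)=p^rx-c$ on $[u,\infty)$ for some constant $c$.

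\emph{Step 2 (the constant).} For a totally ramified Galois extension $L/M$ of degree $e$ and $y$ beyond the largest lower jump, $\phi_{L/M}(y)=\frac1e\bigl(y+\sum_{i\ge1}(|G_i|-1)\bigr)=\frac{y+d_{L/M}-(e-1)}{e}$. Inverting, $\psi_{L/M}(x)=ex-w_{L/M}$ for $x$ large. The same computation applies to $E_1/F$ and $E_1/E$ using inertia subgroups, since only the wild part matters; I will apply it after replacing $F$ by the maximal unramified subextension, which does not change $\psi$. For $x$ beyond the largest upper jump of $E_1/F$, composing the two affine formulas gives
\[
\psi_{E/F}(x)=p^rx-\frac{w_{E_1/F}-w_{E_1/E}}{e_1}.
\]
Transitivity of the different, $d_{E_1/F}=d_{E_1/E}+e_1d_{E/F}$, together with $e(E_1/F)-1=(e_1-1)+e_1(p^r-1)$, yields $w_{E_1/F}=w_{E_1/E}+e_1w_{E/F}$. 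So $c=w_{E/F}$ for large $x$. The affine function from Step 1 has a single constant on all of $[u,\infty)$, so $c=w_{E/F}$ there.

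The main obstacle is Step 1. One must justify carefully that $x>u$ gives $G^x\subset H$, which is where the meaning of "upper jump" for a non-Galois $E/F$ enters. One must also justify the identity $H_t=G_t\cap H$ with $t=\psi_{E_1/F}(x)$, i.e. matching the upper numbering on $G$ with the lower numbering on $H$. If the intended definition of $u$ is instead the last jump of $\psi_{E/F}$ in the sense of Definition \ref{psi jump}, Step 1 is immediate from $\psi_{E/F}'$ being constant after $u$. In that case one only needs the limiting slope $p^r$, and this follows from the large-$x$ computation in Step 2.
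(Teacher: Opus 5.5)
Your proof is correct, and it takes a different route from the paper simply because the paper gives no proof of this statement: it is quoted from \cite{bus-hen19}. You work entirely in a Galois closure. For the slope, you use $H_t=G_t\cap H$ and the chain rule. For the intercept, you use the Galois formula $\psi_{L/M}(x)=e\,x-w_{L/M}$, which comes from Hilbert's different formula, together with $w_{E_1/F}=w_{E_1/E}+e_1w_{E/F}$. That last identity is the transitivity of wild exponents which the paper itself invokes, iterated, near the end of the proof of Theorem~\ref{thm slope adjoint any wild}.

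The citation is shorter. Your argument is self-contained and shows why the constant is the wild exponent.

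It also settles the ambiguity you flag; the paper leaves this open because it defines upper jumps only for Galois groups. Run your Step 1 computation at an arbitrary non-break point $x$, with $t=\psi_{E_1/F}(x)$. It gives
\[
\psi_{E/F}'(x)=\frac{|G_0|}{|H_0|}\cdot\frac{|G_t\cap H|}{|G_t|}=\frac{p^r}{[G^xH:H]}.
\]
This is nondecreasing in $x$ and equals $p^r$ exactly when $G^x\subseteq H$. Hence the last jump of $\psi_{E/F}$ in the sense of Definition~\ref{psi jump} is $\inf\{x: G^x\subseteq H\}$, the largest upper break of $E/F$ read in the Galois closure. So your two readings of $u$ agree, and no separate case is needed.

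A minor simplification: you do not need to pass to maximal unramified subextensions in Step 2. Serre's $\phi_{L/M}$ is already normalized by $|G_0|$, and $d_{L/M}=\sum_{i\ge0}(|G_i|-1)$ holds for any finite Galois $L/M$.
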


\begin{proposition}\label{explict psi}
Let $E/F$ be a totally wildly ramified 
extension of degree $n$. Let $J=\{j_i \mid j_i < j_{i+1},  1 \leq i \leq m-1 \}$ be the set of all jumps of $\psi_{E/F}$. Then, there exists a unique sequence of fields $F=E_0\subset E_1 \subset \dots \subset E_m=E$ such that 
\[\psi_{E/F}(x)=\begin{cases}
	x & \text{if}~ x\leq j_1,\\
	\prod_{i=1}^k w_{j_i} x -w_{E_k/F} & \text{if}~j_k\leq x \leq j_{k+1},
\end{cases}\]
where $w_{j_i}$ is the degree of the extension $E_i/E_{i -1}$ and $w_{E_k/F}$ is the wild exponent of $E_k/F$.
\end{proposition}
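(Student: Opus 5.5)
The statement to prove is Proposition~\ref{explict psi}. The plan is to build the tower from the Bushnell--Henniart theory of $\psi$ for non-Galois extensions, then glue linear pieces using Proposition~\ref{psi-for-unique-jump} applied to the intermediate fields. First I fix a finite Galois extension $E'/F$ containing $E$, and set $G=\mathrm{Gal}(E'/F)$, $H=\mathrm{Gal}(E'/E)$. Since $\psi_{E/F}=\psi_{E'/E}^{-1}\circ\psi_{E'/F}$ is piecewise linear, its jumps $j_1<\dots<j_m$ are finite in number. For each $k$ I define
\[
E_k := \text{the fixed field of } H\,G^{j_k+\epsilon}\quad (\epsilon>0 \text{ small}),
\]
so that $E_k$ is the compositum of $E$'s ``ramification below level $j_k$''. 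This is equivalently the largest subextension of $E/F$ whose $\psi$ has all jumps $\le j_k$. Because $G^{u}$ decreases in $u$, this gives a chain $F=E_0\subset E_1\subset\dots\subset E_m=E$. We have $E_0=F$ since $E/F$ is totally ramified and $\psi_{E/F}(x)=x$ for $x\le j_1$, and $E_m=E$ since $G^{j_m+\epsilon}\subset H$ beyond the last jump.

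The second step is to identify $\psi_{E/F}$ on $[j_k,j_{k+1}]$ with $\psi_{E_k/F}$. By transitivity, $\psi_{E/F}=\psi_{E/E_k}\circ\psi_{E_k/F}$. By construction, all upper jumps of $E/E_k$, measured through $\psi_{E_k/F}$, lie at or above $j_{k+1}$. Hence $\psi_{E/E_k}$ is the identity on $[0,\psi_{E_k/F}(j_{k+1})]$, which gives $\psi_{E/F}=\psi_{E_k/F}$ there. Moreover, $E_k/F$ is totally wildly ramified with largest upper jump $j_k$. So Proposition~\ref{psi-for-unique-jump} gives $\psi_{E_k/F}(x)=[E_k:F]\,x-w_{E_k/F}$ for $x\ge j_k$. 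Writing $[E_k:F]=\prod_{i\le k}[E_i:E_{i-1}]$, it remains to show $[E_i:E_{i-1}]=w_{j_i}$. This follows by comparing slopes on either side of $j_i$: the slope of $\psi_{E/F}$ is $[E_{i-1}:F]$ to the left and $[E_i:F]$ to the right, so the ratio $w_{j_i}$ in \eqref{psi-jump} equals $[E_i:E_{i-1}]$.

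For uniqueness, suppose another tower $E'_k$ satisfies the formula. Then $[E'_k:F]$ is forced to equal the slope of $\psi_{E/F}$ on $(j_k,j_{k+1})$. Also, $\psi_{E'_k/F}$ agrees with $\psi_{E/F}$ up to $j_{k+1}$, so the jumps of $E'_k/F$ are all $\le j_k$. Maximality of $E_k$ then gives $E'_k\subseteq E_k$, and equal degrees give equality.

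The main obstacle is the second step: justifying that $\psi_{E/E_k}$ is the identity up to $\psi_{E_k/F}(j_{k+1})$. This amounts to saying that the upper filtration of $G$ beyond $j_k$ acts trivially on $E_k$ and sees $E/E_k$ only at levels $\ge j_{k+1}$. My first attempt would be to use Herbrand's theorem $(G/N)^u=G^uN/N$ together with the transitivity $\psi_{E'/E_k}=\psi_{E'/E}\circ\psi_{E/E_k}$, comparing indices $[G^0:G^t]$ against $[H^0:H^{t'}]$. I would also check that the $E_k$ are genuinely fields, not just groups $HG^{u}$. This needs $HG^{u}$ to be a subgroup, which holds because $G^u$ is normal in $G$.
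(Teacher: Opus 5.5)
Your route is correct in substance, but it differs from the paper's. The paper takes the tower from \cite[Corollary 1]{bus-hen19}, in which each layer $E_i/E_{i-1}$ has a single jump at $\psi_{E_{i-1}/F}(j_i)$. It then writes $\psi_{E/F}$ as the composite of the layer functions, applies Proposition~\ref{psi-for-unique-jump} to each layer, and assembles the intercept using the transitivity of wild exponents from \cite{bus-hen19}.

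You instead build the tower directly as the fixed fields of $H_k:=HG^{j_k+\epsilon}$, which amounts to reproving that corollary. You then show $\psi_{E/F}=\psi_{E_k/F}$ on $[0,j_{k+1}]$, so a single application of Proposition~\ref{psi-for-unique-jump} to $E_k/F$ gives both the slope $[E_k:F]$ and the intercept $-w_{E_k/F}$. This avoids any wild-exponent bookkeeping. Your uniqueness argument also addresses uniqueness with respect to the displayed formula, which is what the statement asserts. The citation only gives uniqueness of the tower with single-jump layers.

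Two steps need tightening.

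\textbf{The step you flagged.} Herbrand's quotient theorem is not the right tool here: $H_k$ is in general not normal in $G$, and neither $E_k/F$ nor $E/E_k$ need be Galois. Use instead ${\rm Gal}(E'/L)_y=G_y\cap {\rm Gal}(E'/L)$ together with $\psi_{L/F}=\phi_{E'/L}\circ\psi_{E'/F}$. For any $F\subseteq L\subseteq E'$ with $H_L={\rm Gal}(E'/L)$, this gives
\[
\psi_{L/F}'(x)=[G^0H_L:G^xH_L],
\]
and $G^0H=G$ because $E/F$ is totally ramified. Now compare $L=E$ and $L=E_k$:
\begin{enumerate}
\item[(i)] For $x\le j_k$ we have $G^x\supseteq G^{j_k+\epsilon}$, so $G^xH_k=G^xH$.
\item[(ii)] For $j_k<x\le j_{k+1}$, both $G^xH_k$ and $G^xH$ equal $H_k$, since $\psi_{E/F}$ has no jump in $(j_k,j_{k+1})$.
\end{enumerate}
Hence $\psi_{E/F}'=\psi_{E_k/F}'$ on $(0,j_{k+1}]$, which is exactly what you need. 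The same formula shows that all jumps of $\psi_{L/F}$ are $\le j_k$ if and only if $G^{j_k+\epsilon}\subseteq H_L$. This justifies your characterisation of $E_k$ by maximality.

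\textbf{The uniqueness part.} The formula only involves $[E'_k:F]$ and $w_{E'_k/F}$, so it does not directly say that $\psi_{E'_k/F}$ agrees with $\psi_{E/F}$ up to $j_{k+1}$. Argue by convexity instead:
\[
\psi_{E/F}=\psi_{E/E'_k}\circ\psi_{E'_k/F}\ge\psi_{E'_k/F}\ge[E'_k:F]x-w_{E'_k/F}.
\]
The last inequality is strict below the largest jump of $\psi_{E'_k/F}$. The formula forces equality at $x=j_k$, so that largest jump is $\le j_k$. Your maximality and degree comparison then finish the proof.
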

\begin{proof}
This can be proved by \cite[Corollary 1]{bus-hen19} and repeated use of Proposition \ref{psi-for-unique-jump} together with \cite[Lemma (2), p. 1970]{bus-hen19}.
\end{proof}

\begin{remark}\label{explicit phi}
Let $E/F$ be a totally wildly ramified 
extension of degree $n$. Let $\phi_{E/F}$ denote the inverse of $\psi_{E/F}$ in Proposition \ref{explict psi}. Then we have 
\[\phi_{E/F}(x)=\begin{cases}
x & \text{if}~ x\leq j_1,\\
\dfrac{x}{\prod_{i=1}^k w_{j_i}} + \dfrac{w_{E_k/F}}{\prod_{i=1}^k w_{j_i}} & \text{if}~j_k\leq x \leq j_{k+1}.
\end{cases}\]
\end{remark}

\begin{corollary}\label{cyclic phi}
Let $E/F$ be a totally ramified cyclic 
extension of degree $p^r$. Then there exist positive integers $i_0, i_1,\dots, i_{r-1}$ with $j_k=i_0+i_1+\dots+i_{k-1}$ for $1\leq k\leq r$ such that
\[\phi_{E/F}(x)=\begin{cases}
x & \text{if}~ x\leq j_1,\\
\dfrac{x}{p^k} + \dfrac{w_{E_k/F}}{p^k} & \text{if}~j_k\leq x \leq j_{k+1}
\end{cases}.\]
\end{corollary}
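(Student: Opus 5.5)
This follows by specializing Proposition \ref{explict psi} and Remark \ref{explicit phi} to the cyclic case, using Lemma \ref{same set of jumps} to identify the jumps. Put $G={\rm Gal}(E/F)\cong\mathbb{Z}/p^r\mathbb{Z}$. Since $E/F$ is totally ramified of $p$-power degree with $p$ odd, it is totally wildly ramified. Hence Proposition \ref{explict psi} applies.

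\textbf{Step 1: the jumps.} As in the proof of Lemma \ref{same set of jumps}, \cite[Chapter IV, p.~76]{ser79} supplies positive integers $i_0,\dots,i_{r-1}$. The upper jumps of $G$ are then exactly $j_k=i_0+\dots+i_{k-1}$ for $1\le k\le r$, and $|G^{j_k}|=p^{r-k+1}$. By Lemma \ref{same set of jumps}, these $j_k$ are precisely the jumps of $\psi_{E/F}$. So the set $J$ in Proposition \ref{explict psi} is $\{j_1<\dots<j_r\}$, with $m=r$.

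\textbf{Step 2: the factors $w_{j_i}$.} The proof of Lemma \ref{same set of jumps} computes $w_{j_k}(E/F)=p$ for every $k$. The $w_{j_i}$ in Proposition \ref{explict psi} are the left/right derivative ratios of $\psi_{E/F}$ at the jumps, and they equal the degrees $[E_i:E_{i-1}]$. Hence each equals $p$, and
\[
\prod_{i=1}^k w_{j_i}=p^k .
\]
As a consistency check, the tower $E_k$ should be the fixed field of $G^{j_{k+1}}$, which has degree $p^k$ over $F$.

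\textbf{Step 3: invert.} Substituting into Remark \ref{explicit phi} gives the stated piecewise formula for $\phi_{E/F}$: on $j_k\le x\le j_{k+1}$ it is $x/p^k+w_{E_k/F}/p^k$, and on $[0,j_1]$ it is the identity. For the last interval $k=r$, read $j_{r+1}=\infty$, which agrees with Proposition \ref{psi-for-unique-jump}.

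The only delicate point is Step 2: the $w_{j_i}$ in Proposition \ref{explict psi} must be the same quantities as the ratios $w_x(E/F)$ computed in Lemma \ref{same set of jumps}. I would settle this by differentiating the explicit formula in Proposition \ref{explict psi} on both sides of $j_k$. The slopes are $\prod_{i\le k-1}w_{j_i}$ and $\prod_{i\le k}w_{j_i}$, so their ratio is $w_{j_k}$. On the other hand, the cyclic computation $\psi_{E/F}(x)=p^kx-\sum_{l<k} i_l(p^k-p^l)$ gives slopes $p^{k-1}$ and $p^k$ on the two sides. Comparing forces $w_{j_k}=p$.
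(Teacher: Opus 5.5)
\textbf{There is a genuine gap in Step 3, and the paper's own proof shares it.}

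Your route is the paper's. You locate the jumps of $\psi_{E/F}$ using Serre's description of the cyclic filtration together with Lemma \ref{same set of jumps}, then show $w_{j_i}=p$, then substitute into Remark \ref{explicit phi}. The only variation is Step 2. There you obtain $w_{j_i}=p$ by comparing slopes with the explicit cyclic formula for $\psi_{E/F}$, where the paper quotes \cite[Corollary 1]{bus-hen19} for $[E_i:E_{i-1}]=p$. That part is fine.

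The problem in Step 3 is inherited from Remark \ref{explicit phi}: inverting a piecewise-linear function moves its breakpoints. On $[j_k,j_{k+1}]$, an interval in the upper numbering, you have $\psi_{E/F}(x)=p^kx-w_{E_k/F}$. Inverting gives
\[
\phi_{E/F}(y)=\frac{y+w_{E_k/F}}{p^k}\qquad\text{for }\ \psi_{E/F}(j_k)\le y\le\psi_{E/F}(j_{k+1}).
\]
The cyclic formula gives $\psi_{E/F}(j_k)=i_0+pi_1+\dots+p^{k-1}i_{k-1}=l_k$, the $k$-th \emph{lower} jump; equivalently $w_{E_k/F}=p^kj_k-l_k$.

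For $k\ge 2$ we have $l_k-j_k=\sum_{m=1}^{k-1}i_m(p^m-1)>0$. So the formula with intervals $[j_k,j_{k+1}]$ is false as soon as $r\ge 2$. It is not even well defined. For $1\le k\le r-1$, both the $k$-th and the $(k+1)$-st branch apply at $x=j_{k+1}$. Using $w_{E_{k+1}/F}-p\,w_{E_k/F}=(p-1)\,l_{k+1}$, the two branches agree there if and only if $j_{k+1}=l_{k+1}$, which never happens.

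Your remark that the case $k=r$ agrees with Proposition \ref{psi-for-unique-jump} is the same slip. That proposition describes $\psi_{E/F}$ on $[j_r,\infty)$, hence $\phi_{E/F}$ only on $[l_r,\infty)$. Only the first branch is correct as stated, because $l_1=j_1=i_0$.

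What your Steps 1--2 plus a correct inversion actually prove is the version with $j_k\le x\le j_{k+1}$ replaced by $l_k\le x\le l_{k+1}$, with $l_{r+1}=\infty$. The difference matters downstream. The last branch $(x+w_{E/F})/p^r$ may only be used for $x\ge l_r$. Applying it at a point merely known to be $\ge j_r$, as in the proof of Theorem \ref{thm slope adjoint wild} via Lemma \ref{large slope}, is therefore not justified by the corrected statement.
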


\begin{proof}
Since $E/F$ is a totally wild cyclic Galois extension of of degree $p^r$, by \cite[Chapter IV, p. 76]{ser79} and Lemma \ref{same set of jumps}, jumps of $\psi_{E/F}$ occur at $r$ distinct positive integers, $j_1<j_2<\dots<j_r$ (say). By \cite[Corollary 1]{bus-hen19}, there exist unique sequence of fields $F=E_0\subset E_1 \subset \dots \subset E_r=E$ with $w_{j_i} = [E_i : E_{i-1}] = p$ for $1\leq i \leq r-1$ such that $\psi_{E_i/E_{i-1}}$ has a unique jump $\psi_{E_{i-1}/F}(j_i)$. Now taking $w_{j_i}=p$ for each $i$ in  Remark \ref{explicit phi} the result follows.
\end{proof}



\section{Proof of Theorems \ref{int thm slope adjoint wild} and \ref{int thm slope adjoint any wild}}\label{sec slope wild}

In this section, we compute the slope of the adjoint representation of a Carayol representation induced from a character and of dimension $p^r$.

\subsection{Some lemmas}\label{sec lemmas cyclic}

This subsection consists of a few lemmas which will be essential in proving the theorems.

\begin{lemma}\label{min jump}
Let $E/F$ be a finite Galois extension. We have \[\min_{\lambda\in{\Gal}(E/F)}\,v_E(\lambda)=i_0+1,\] where $i_0$ is the smallest lower jump of ${\rm Gal}(E/F)$. 
\end{lemma}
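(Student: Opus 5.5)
The claim reduces to the definition of the lower ramification filtration, following \cite[Chapter IV, \S1]{ser79}. For $\lambda\in G:={\rm Gal}(E/F)$, set $i_G(\lambda)=v_E(\lambda(\pi_E)-\pi_E)$ when $\mathcal O_E=\mathcal O_F[\pi_E]$; in general $i_G(\lambda)=\min_{a\in\mathcal O_E} v_E(\lambda(a)-a)$. I read $v_E(\lambda)$ in the statement as $i_G(\lambda)$, and the minimum as taken over $\lambda\neq 1$, since $i_G(1)=\infty$. The basic characterization is that for $i\geq -1$,
\[\lambda\in G_i \iff i_G(\lambda)\geq i+1 .\]

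The plan has three steps.

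\emph{Step 1.} For $\lambda\neq 1$, let $i(\lambda)$ be the largest integer with $\lambda\in G_{i(\lambda)}$. This exists because $\bigcap_i G_i=1$. By the characterization, $i_G(\lambda)=i(\lambda)+1$.

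\emph{Step 2.} The integer $i(\lambda)$ is a lower jump, because $\lambda\in G_{i(\lambda)}\setminus G_{i(\lambda)+1}$ gives $G_{i(\lambda)}\neq G_{i(\lambda)+1}$. Hence $i(\lambda)\geq i_0$ for every $\lambda\neq1$, and so $\min_\lambda i_G(\lambda)\geq i_0+1$.

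\emph{Step 3.} For the reverse inequality, since $i_0$ is a jump we have $G_{i_0}\supsetneq G_{i_0+1}$. Choose $\lambda\in G_{i_0}\setminus G_{i_0+1}$. Then $i(\lambda)=i_0$, so $i_G(\lambda)=i_0+1$, and the minimum is attained.

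The main point of care is a convention issue, not a mathematical one. If $E/F$ has an unramified part, then $G_{-1}\neq G_0$, so $-1$ is a lower jump and the minimum is $0$. This is consistent with the formula $i_0+1=0$ provided "smallest lower jump" counts $-1$. In the applications the extension is totally ramified, and there the equality $G_0=G$ holds. I would still write out the $i_G$ characterization when $\mathcal O_E$ is not monogenic over $\mathcal O_F$, using the minimum over $a\in\mathcal O_E$, so that the argument does not depend on choosing a uniformizer.
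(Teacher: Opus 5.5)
Your proof is correct and follows the same route as the paper. The lower bound $v_E(\lambda)\geq i_0+1$ comes from $i_0$ being the smallest lower jump, and the minimum is attained at any $\lambda\in G_{i_0}\setminus G_{i_0+1}$. Your Step 2 makes explicit something the paper leaves implicit, namely why each $\lambda\neq 1$ sits at a jump. Your convention remarks (the $-1$ jump when $E/F$ has an unramified part) are also correct, and the extra care about a non-monogenic $\mathcal O_E$ is unnecessary, since $\mathcal O_E=\mathcal O_F[x]$ for some $x$ whenever the residue field extension is separable, as it is here.
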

\begin{proof}
We write $G:={\rm Gal}(E/F)$. Since $i_0$ is the smallest lower jump of $G$, we have $v_E(\lambda) \geq i_0+1$ for all $\lambda\in G$. This shows that $ \min_{\lambda\in G}\,v_E(\lambda)\geq i_0+1$. This minimum is attained at $\lambda\in G_{i_0}\backslash G_{i_0+1}$ and hence, the lemma follows.
\end{proof}



The notations and the proof of the next lemma are inspired by \cite{kil19}.
\begin{lemma}\label{equality-check}
Let $E/F$ be a totally ramified cyclic Galois extension of degree $p^r$ with Galois group $G={\rm Gal}(E/F)$. Let $\rho = {\rm Ind}_{E/F}\chi$ be a Carayol representation of $\mc W_F$, where $\chi$ denotes a character of $W_E$. Let $\lambda \in G\setminus \{\id\}$. Let $\varsigma = {\rm Sw}(\chi)$ and $\delta = v_E(\lambda) - 1$ with $\varsigma > \delta$. Then, we have $\varsigma \not\equiv \delta \mod p$.
\end{lemma}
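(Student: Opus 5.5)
The plan is to compute ${\rm Sw}(\rho)$ modulo $p$ via the conductor–discriminant formula, after first inducing in stages through the fixed field of a suitable ramification subgroup. Here $v_E(\lambda)$ is read as $i_G(\lambda)=v_E(\lambda(\pi_E)-\pi_E)$, so that $\lambda\in G_{\delta}\setminus G_{\delta+1}$ exactly when $\delta=v_E(\lambda)-1$.

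\emph{Step 1 (where Carayol enters).} Since $\rho$ is Carayol of dimension $p^r$, we have $({\rm Sw}(\rho),p)=1$. Note that $\rho$ is irreducible, so $\chi^\mu\neq\chi$ for every $\mu\in G\setminus\{\id\}$ (Mackey).

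\emph{Step 2 (reduction to a subgroup).} Let $H:=G_\delta$, which contains $\lambda\neq\id$, so $|H|=p^s$ with $s\geq 1$. Let $E'=E^H$ and $\sigma={\rm Ind}_{E/E'}\chi$. Then $\sigma$ is irreducible by Mackey, since $\chi^\mu\neq\chi$ for all $\mu\in H\setminus\{\id\}$, and $\rho={\rm Ind}_{E'/F}\sigma$.

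For a totally ramified $E'/F$, the conductor formula $a({\rm Ind}\,\sigma)=\dim\sigma\cdot d_{E'/F}+a(\sigma)$ applies. Both $\sigma$ and $\rho$ have no inertia-invariants, so $a=\mathrm{Sw}+\dim$. This gives
\[{\rm Sw}(\rho)={\rm Sw}(\sigma)+p^s\bigl(d_{E'/F}-[E':F]+1\bigr)\equiv {\rm Sw}(\sigma)\pmod p .\]
Hence $p\nmid {\rm Sw}(\sigma)$.

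\emph{Step 3 (computing ${\rm Sw}(\sigma)$ mod $p$).} Apply the same formula to $E/E'$, using $d_{E/E'}=\sum_{\mu\in H\setminus\{\id\}}i_H(\mu)$ and $a(\chi)=\varsigma+1$. This yields
\[{\rm Sw}(\sigma)=\varsigma+\sum_{\mu\in H\setminus\{\id\}}\bigl(i_H(\mu)-1\bigr).\]
Lower numbering is compatible with subgroups, so $H_i=G_i\cap H$ and $i_H=i_G$ on $H$. Therefore the sum equals $\sum_i i\,(|H_i|-|H_{i+1}|)$ taken over the lower jumps $i$ of $H$.

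Each $H_i$ is a $p$-group, so $|H_i|-|H_{i+1}|$ is divisible by $p$ unless $H_{i+1}=1$. That case occurs exactly at the largest jump of $H$, where the count is $|H_i|-1\equiv -1$. The largest lower jump of $H=G_\delta$ is $\delta$ itself, because $\lambda\in G_\delta\setminus G_{\delta+1}$ shows $H_\delta\neq H_{\delta+1}$, and $H_i=H$ for $i\le\delta$. Hence ${\rm Sw}(\sigma)\equiv\varsigma-\delta\pmod p$.

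\emph{Conclusion.} Combining Steps 2 and 3 gives $p\nmid(\varsigma-\delta)$, i.e. $\varsigma\not\equiv\delta \pmod p$.

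The main obstacle is the bookkeeping in Steps 2 and 3: verifying that $H_i=G_i\cap H$ with the same indexing (Herbrand compatibility for subgroups), and checking that $\rho$ and $\sigma$ have no inertia-fixed vectors so that $a=\mathrm{Sw}+\dim$. The latter holds because both are irreducible and wildly ramified, being totally ramified of $p$-power degree with $\varsigma>\delta\ge 0$. Note that this argument does not actually use the hypothesis $\varsigma>\delta$ beyond guaranteeing $\chi$ is wild.

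A fallback would be the local class field theory computation $\chi^\lambda\chi^{-1}(1+a\pi^k)\approx\chi(1+k a u\pi^{k+\delta})$ with $k=\varsigma-\delta$. However, that only shows a drop in conductor when $p\mid\varsigma-\delta$, and it is not by itself a contradiction, so I would rely on the discriminant argument above.
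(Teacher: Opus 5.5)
\textbf{Gap in Step 3.} Steps 1 and 2 and the mod $p$ counting in Step 3 are correct. The problem is your identification of the largest lower jump of $H=G_\delta$. Since $H_i=G_i\cap G_\delta$, you have $H_i=H$ for $i\le\delta$ but $H_i=G_i$ for $i\ge\delta$. The two facts you cite ($H_i=H$ for $i\le\delta$, and $\lambda\in H_\delta\setminus H_{\delta+1}$) show that $\delta$ is the \emph{smallest} lower jump of $H$, not the largest. The largest lower jump of $H$ is the largest lower jump $l_r$ of $G$ itself, because $1\neq G_{l_r}\subseteq G_\delta$. So your count actually gives $\Sw(\sigma)\equiv\varsigma-l_r\pmod p$, and hence only $\varsigma\not\equiv l_r\pmod p$. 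The detour through $E'$ also buys nothing: the same count applied directly to $\rho={\rm Ind}_{E/F}\chi$ gives
\[\Sw(\rho)=\varsigma+\sum_{\mu\in G\setminus\{\id\}}\bigl(i_G(\mu)-1\bigr)\equiv\varsigma-l_r\pmod p.\]

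\textbf{Missing ingredient.} To get $\varsigma\not\equiv\delta$ for an arbitrary $\lambda\neq\id$, you need $\delta\equiv l_r\pmod p$, i.e.\ that all lower jumps of $G$ are congruent mod $p$. This is exactly where the cyclicity hypothesis has to enter. Your argument never uses that $G$ is cyclic, which should have been a warning sign that an arithmetic input is missing.

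For $G$ cyclic of order $p^r$, Hasse--Arf gives the lower jumps explicitly as $l_m=i_0+i_1p+\cdots+i_{m-1}p^{m-1}$ with positive integers $i_k$ (Serre, \emph{Local Fields}, IV, p.~76). Hence every lower jump is $\equiv i_0\pmod p$. With this input the proof closes in one line: $\Sw(\rho)\equiv\varsigma-i_0$ and $\delta\equiv i_0$, so $p\nmid\Sw(\rho)$ forces $\varsigma\not\equiv\delta\pmod p$.

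This is precisely the paper's argument. It combines $\Sw({\rm Ind}_{E/F}\chi)=\Sw(\chi)+\sum_{\lambda\neq\id}(v_E(\lambda)-1)$ with the congruence of all lower jumps mod $p$, and needs no reduction to a subgroup.
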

\begin{proof}
Let $\varsigma> \delta$. 
We show that $\varsigma\not\equiv \delta \mod p$.
Note that $E/F$ is a totally wild cyclic Galois extension of degree $p^r$ and, therefore by \cite[Example, p. 76]{ser79}, the lower jumps $l_1,l_2, \dots, l_r$ are all congruent to each other modulo $p$. So there exists an integer $n$ such that $l_j\equiv n \mod p$ for all $j$. 
By \cite[Lemma (2), p. 1970]{bus-hen19}, we have
\[\Sw({\Ind}_{E/F}\,\chi) = \Sw(\chi)+ w_{E/F},\] where $w_{E/F}$ denotes the wild exponent of $E/F$. This further using \cite[Lemma 1.2 (iii)]{kil19} gives 
\[\Sw({\Ind}_{E/F}\,\chi) = \Sw(\chi)+ \sum_{\lambda\in G\backslash \{\id\}}({v_E}(\lambda) -1).\] 
But each $v_E(\lambda) -1 = l_j$ for some $j$ and $l_j \equiv n \mod p$ for all $\lambda\in G\setminus \{\id\}$. Also, note that the cardinality of $G\setminus \{\id\}$ is $p^r-1$.
Thus modulo $p$, the above equation becomes
\[\Sw({\Ind}_{E/F}\,\chi) \equiv {\Sw}(\chi)-n.\] Since by our assumption $\Ind_{E/F}\chi$ is Carayol, we have $\Sw(\chi)\not\equiv n \mod p$. Again, for each $\lambda\in G\setminus\{\id\}$, the number $v_E(\lambda) -1$ being some $l_j$ for some $j$, is congruent to $n$ modulo $p$. Thus we conclude that 
$\Sw(\chi)\not\equiv v_E(\lambda) -1\mod p$ for all $\lambda\in G\setminus\{\id\}$.		
\end{proof}


Let $E/F$ be a cyclic Galois extension of degree $p^r$. We denote $G:={\rm Gal}(E/F)$. Recall that 
\[\{j_m\mid j_m=\sum_{k=1}^{m}i_{k-1}, ~1\leq m \leq r\}~~ \&~~ \{l_m\mid l_m=\sum_{k=1}^{m}i_{k-1}p^{k-1}, ~1\leq m \leq r\}\] denote the upper and lower jumps in $G$ 
respectively, where $i_m$ are positive integers (cf. \cite[Chapter IV, p. 76]{ser79}). 

With these notations, we have the following important lemma which says that the difference between the largest and smallest lower jumps is bigger than the largest upper jump.
\begin{lemma}\label{large slope}
We have  $l_r-l_1 \geq j_r$ for $r\geq 2$ and $p\neq 2$.
\end{lemma}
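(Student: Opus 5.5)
We will prove the inequality directly from the explicit formulas for the jumps recalled just before the statement:
\[
l_m=\sum_{k=1}^{m} i_{k-1}p^{k-1},\qquad j_m=\sum_{k=1}^{m} i_{k-1},
\]
where the $i_k$ are positive integers. The plan is to expand the difference $l_r-l_1-j_r$ as a sum of terms, each of which is visibly nonnegative.

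First, since $l_1=i_0$, we get
\[
l_r-l_1=\sum_{k=2}^{r} i_{k-1}p^{k-1}.
\]
Subtracting $j_r=i_0+\sum_{k=2}^{r} i_{k-1}$ then gives
\[
l_r-l_1-j_r=\sum_{k=2}^{r} i_{k-1}\,(p^{k-1}-1)\;-\;i_0 .
\]
Everything now hinges on the $k=2$ term, which is $i_1(p-1)$. Since $p\neq 2$ we have $p-1\geq 2$. For $k\geq 3$ the terms are nonnegative (indeed positive) because each $i_{k-1}\geq 1$.

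It therefore suffices to compare $i_1(p-1)$ with $i_0$. This is the one non-formal step: the positivity of the $i_k$ alone does not imply $i_1(p-1)\ge i_0$. We need the standard constraint on the jumps of a cyclic $p$-extension recalled from \cite[Chapter IV, p.~76]{ser79}. In upper numbering, consecutive upper jumps satisfy $j_{m+1}\geq p\,j_m$ when $j_m$ is prime to $p$, and $j_{m+1}=p j_m$ in the (only characteristic-zero) case where $p\mid j_m$. In either case $j_2\ge p j_1$, that is,
\[
i_0+i_1\ge p\,i_0,\qquad\text{so}\qquad i_1\geq (p-1)i_0 .
\]
Then
\[
i_1(p-1)\geq (p-1)^2 i_0\geq 4i_0\geq i_0 ,
\]
and hence $l_r-l_1-j_r\geq 0$.

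The main obstacle is justifying $i_1\geq (p-1)i_0$ from the Hasse--Arf/Serre description in the generality of the paper's setting. If one prefers to avoid this constraint, the fallback is to use the fact, also recalled in the proof of Lemma~\ref{equality-check}, that the lower jumps satisfy $l_2\equiv l_1 \pmod p$ and $l_2>l_1$. This gives only $p\,i_1\geq p$, so $i_1\geq 1$, which is too weak. So the upper-jump inequality $j_2\ge p j_1$ is genuinely required, and we will cite it explicitly from \cite{ser79}.
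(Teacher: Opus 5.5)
Your algebraic reduction is correct and cleaner than the paper's. The terms with $k\ge 3$ in $\sum_{k=2}^{r}i_{k-1}(p^{k-1}-1)-i_0$ are nonnegative, so the whole lemma reduces to $(p-1)i_1\ge i_0$, which is exactly its case $r=2$. The gap is in how you justify that inequality.

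The inequality $j_{m+1}\ge p\,j_m$ that you attribute to \cite{ser79} holds only in equal characteristic, or in characteristic $0$ when $j_m\le e_F/(p-1)$, where $e_F=v_F(p)$. In characteristic $0$ the correct rule is $j_{m+1}\ge\min(p\,j_m,\ j_m+e_F)$. Moreover $j_{m+1}=j_m+e_F$ as soon as $j_m\ge e_F/(p-1)$, because $x\mapsto x^p$ maps $U_F^{u}$ onto $U_F^{u+e_F}$ for $u>e_F/(p-1)$. This is the dichotomy behind the exercise in \cite{fv02} that the paper relies on.

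Two examples show the failure.
\begin{itemize}
\item The cyclic degree-$p^2$ subextension of $\mathbb{Q}_p(\zeta_{p^3})/\mathbb{Q}_p$ has upper jumps $1,2$. So $i_0=i_1=1$, and your claimed $i_1\ge(p-1)i_0$ fails for every odd $p$. The lemma itself still holds there, since $l_2-l_1=p\ge 2$.
\item Your argument never genuinely uses $p\ne 2$: the last step $(p-1)^2i_0\ge i_0$ holds for $p=2$ too. Yet the lemma is false for $p=2$. For $\mathbb{Q}_2(\zeta_{16})^{+}/\mathbb{Q}_2$ the upper jumps are $2,3$ and the lower jumps are $2,4$, so $l_2-l_1=2<3=j_2$. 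This example also refutes your claimed equality $j_{m+1}=p\,j_m$ when $p\mid j_m$.
\end{itemize}

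To close the gap, split according to $j_1=i_0$.
\begin{itemize}
\item If $i_0<e_F/(p-1)$, then $j_2\ge p\,j_1$ does hold, and your argument goes through.
\item If $i_0\ge e_F/(p-1)$, then $i_1=j_2-j_1=e_F$. The degree-$p$ subextension has break $j_1$, and the standard bound $j_1\le pe_F/(p-1)$ then gives $(p-1)i_1=(p-1)e_F\ge pe_F/(p-1)\ge i_0$, because $(p-1)^2\ge p$ for $p\ge 3$.
\end{itemize}
The second case is where $p\neq 2$ really enters. It is essentially the paper's Case 1, which applies the same idea at the last step using $i_{r-1}=e_F$ and $i_0\le pe_F/(p-1)$. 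With your reduction to $r=2$, the paper's long Case 2 computation becomes unnecessary.
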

\begin{proof}[Proof of Lemma \ref{large slope}]
Let $e_F$ be the absolute ramification index of $E$, i.e., $v_F(p)=e_F$. 
We analyze $l_r-l_1 = l_r-i_0$ in two different cases.

\textbf{Case 1:} Suppose that $l_{r-1}\geq p^{r-2}e_F/(p-1)$. Then by \cite[Exercise 5.(f), p. 95]{fv02}, we have $l_r=l_{r-1}+p^{r-1}e_F$. Since $l_r=l_{r-1}+i_{r-1}p^{r-1}$, we have $i_{r-1}=e_F$. Also we note that $j_1=i_0\leq \dfrac{p e_F}{p-1}$ (cf. \cite[p. 140, l. -4]{wym69}). We have \[l_r-i_0=\sum_{k=2}^{r}i_{k-1}p^{k-1}.\] 
Hence,
\begin{eqnarray*}
(l_r-i_0)-j_r
&=&\sum_{k=2}^{r}i_{k-1}p^{k-1}- \sum_{k=1}^{r}i_{k-1}\\
&=&\sum_{k=2}^{r-1}i_{k-1}(p^{k-1}-1)+i_{r-1}(p^{r-1}-1)-i_0\\
&\geq& e_{F}(p^{r-1}-1)-\dfrac{pe_{F}}{p-1}\\
&=& e_{F}(p^{r-1}-1-\dfrac{p}{p-1})\\
&\geq& 0\\
\end{eqnarray*}
for $r\geq 2$ and $p\neq 2$. 
	\textbf{Case 2}: Suppose $l_{r-1}\leq p^{r-2}e_{F}/(p-1)$. Then by \cite[Exercise 5.(f), p. 95]{fv02}, we have 
	
	\begin{eqnarray}\label{case-2-inequality}
		(1+p(p-1))l_{r-1}\leq l_r \leq p^re_{F}/(p-1)-(p-1)l_{r-1}.
	\end{eqnarray}
	
	Note that, $l_r-i_0=\sum_{k=2}^{r}i_{k-1}p^{k-1}$ and $j_r=i_0+i_1+\dots+i_{r-1}$.
	Thus
	\begin{eqnarray*}
		l_r-i_0 & \geq & j_r\\
		\iff
		i_1p+i_2p^2+\dots+i_{r-1}p^{r-1} & \geq & i_0+i_1+\dots+i_{r-1} \\
		\iff (l_2-l_1)+(l_3-l_2)+\dots+(l_r-l_{r-1}) & \geq & l_1+(l_2-l_1)/p+
		\dots+(l_r-l_{r-1})/p^{r-1}\\
	\end{eqnarray*}
	which is true if and only if 
	\begin{eqnarray}\label{enough-to-show}
		(l_2-l_1)(1-1/p)+(l_3-l_2)(1-1/p^2)+\dots
		+(l_r-l_{r-1})(1-1/p^{r-1})&  \geq & l_1
	\end{eqnarray}
	
	
	We now analyze the left hand side of the above inequality. Note that
	by the left inequality in \eqref{case-2-inequality},
	we have $l_r-l_{r-1}\geq p(p-1)l_{r-1}$. Using this, we have
	\begin{eqnarray}\label{after-use-of-inequality}
		(l_2-l_1)(1-1/p)+
		\dots+(l_r-l_{r-1})(1-1/p^{r-1})\geq\nonumber\\
		(l_2-l_1)(1-1/p)+
		\dots+(l_{r-1}-l_{r-2})(1-1/p^{r-2})+p(p-1)l_{r-1}(1-1/p^{r-1}).\\ \nonumber
	\end{eqnarray}
	
	Observe that the coefficient of $l_{r-1}$ in \eqref{after-use-of-inequality} is
	\[1-1/p^{r-2}+p^2-p-1/p^{r-3}+1/p^{r-2}=1-1/p^{r-3}+p^2(1-1/p)=(1-1/p)(1+1/p+\dots+1/p^{r-4}+p^2).\] 
	Taking out a factor of $(1-1/p)$ 
	from 
	the right hand side of the inequality in
	\eqref{after-use-of-inequality}, we have
	\begin{align}\label{after-pull-out}
		&(l_2-l_1)+(l_3-l_2)(1+1/p)+(l_4-l_3)(1+1/p+1/p^2)+\dots \nonumber \\ \nonumber
		~~&+(l_{r-2}-l_{r-3})(1+1/p+\dots+1/p^{r-4})-l_{r-2}(1+1/p+\dots+1/p^{r-3})\\
		&~~~~+l_{r-1}(1+1/p+1/p^2+\dots+1/p^{r- 4}+p^2),\\\nonumber
	\end{align}
	which by writing $l_{r-1}=l_{r-2}+i_{r-2}p^{r-2}$, equals
	\begin{align*}
		&-l_1-l_2/p-\dots-l_{r-3}/p^{r-4}-l_{r-2}/p^{r-3}\\
		&~~~~~~+(l_{r-2}+i_{r-2}p^{r-2})(1+1/p+1/p^2+\dots+1/p^{r- 4}+p^2)\\
		&=-l_1-l_2/p-\dots -l_{r-4}/p^{r-5}-l_{r-3}/p^{r-4}\\
		&~~~~~~~+l_{r-2}(-1/p^{r-3}+1+1/p+1/p^2+\dots+1/p^{r- 4}+p^2)\\
		&~~~~~~~~~+i_{r-2}p^{r-2}(1+1/p+1/p^2+\dots+1/p^{r- 4}+p^2).\\
	\end{align*}
	
	Continuing this process, after a finite number of steps the last expression equals
	\begin{align}\label{reduced-4.3}
		&l_1(p^2-1/p^{r-3})+i_1p(-1/p^{r-3}+1+p^2)\nonumber\\\nonumber
		&~~~~~~~~~~~~~~~~~~~~~~~~~+i_2p^2(-1/p^{r-3}+1+1/p+p^2)\\\nonumber
		&~~~~~~~~~~~~~~~~~~~~~~~~~~+i_3p^3(-1/p^{r-3}+1+1/p+1/p^2+p^2)\\\nonumber
		&~~~~~~~~~~~~~~~~~~~~~~~~~~~+i_4p^4(-1/p^{r-3}+1+1/p+1/p^2+1/p^3+p^2)\\\nonumber
		&~~~~~~~~~~~~~~~~~~~~~~~~~~~~+\dots\\\nonumber
		&~~~~~~~~~~~~~~~~~~~~~~~~~~~~~+i_{r-3}p^{r-3}(-1/p^{r-3}+1+1/p+1/p^2+\dots+1/p^{r- 4}+p^2)\\\nonumber
		&~~~~~~~~~~~~~~~~~~~~~~~~~~~~~~+i_{r-2}p^{r-2}(1+1/p+1/p^2+\dots+1/p^{r- 4}+p^2)\\
		&=l_1(p^2-1/p^{r-3})+(*),\\\nonumber
	\end{align}
	where $(*)$ is a positive number.
	
	Now
	by \eqref{after-use-of-inequality},  \eqref{after-pull-out} and \eqref{reduced-4.3}, it follows that
	\[(l_2-l_1)(1-1/p)+
	+\dots+(l_r-l_{r-1})(1-1/p^{r-1})\geq l_1(1-1/p)(p^2-1/p^{r-3})+(*)\geq l_1,\] 
	which is \eqref{enough-to-show}. 
\end{proof}



\subsection{Proof of Theorem \ref{int thm slope adjoint wild}} \label{subsec slope adjoint wild}
Now we are ready to prove Theorem \ref{int thm slope adjoint wild}. For convenience we restate it below.

\begin{theorem}\label{thm slope adjoint wild} Let $\rho$ be a Carayol representation of $\mathcal{W}_F$ of dimension $p^r$ with $r>1$. Suppose
	$\rho=\Ind_{E/F}\,\chi$ for some totally ramified Galois extension $E/F$ of degree $p^r$ such that ${\rm Gal}(E/F)$ is cyclic and a character $\chi$ of $\mathcal{W}_E$. Let $i_0$
	denote the smallest lower jump of ${\Gal}(E/F)$. Then
	\[\slope(\rho\otimes\rho^\vee)=\slope(\rho)-\frac{i_0}{p^r}.\] 
\end{theorem}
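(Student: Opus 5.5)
Since $E/F$ is Galois, Mackey theory decomposes the adjoint:
\[\rho\otimes\rho^\vee=\Ind_{E/F}\bigl(\chi\otimes\Res\,\Ind_{E/F}\chi^{-1}\bigr)=\bigoplus_{\lambda\in G}\Ind_{E/F}\bigl(\chi\cdot(\chi^{\lambda})^{-1}\bigr),\qquad G={\rm Gal}(E/F).\]
The slope of a direct sum is the maximum of the slopes of the summands. Proposition \ref{slope-induction} converts each summand's slope into $\phi_{E/F}\bigl(\slope(\chi/\chi^\lambda)\bigr)$, and $\phi_{E/F}$ is increasing. So
\[\slope(\rho\otimes\rho^\vee)=\phi_{E/F}\Bigl(\max_{\lambda\neq \id}\slope(\chi/\chi^{\lambda})\Bigr).\]
The term $\lambda=\id$ contributes $\Ind_{E/F}1$, of slope at most the largest upper jump $j_r$; this is handled below. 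The plan is to compute $\slope(\chi/\chi^\lambda)$ for each $\lambda\neq\id$, identify the maximum, and then evaluate $\phi_{E/F}$ using Corollary \ref{cyclic phi}.

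\textbf{Key local computation.} For $\lambda\neq\id$ with $\delta=v_E(\lambda)-1$ and $\varsigma=\Sw(\chi)=\slope(\chi)$ (a character), I expect
\[\slope(\chi/\chi^\lambda)=\varsigma-\delta\]
whenever $\varsigma>\delta$, and that the condition $\varsigma\not\equiv\delta \bmod p$ from Lemma \ref{equality-check} is exactly what makes this exact rather than merely an upper bound. The argument follows \cite{kil19}. Via local class field theory, $\chi$ restricted to $U_E^{\varsigma}$ is a nontrivial character of the form $1+x\mapsto\psi(cx)$. The character $\chi^\lambda/\chi$ evaluated at $1+x$ with $v_E(x)=k$ is $\chi\bigl(\lambda(1+x)/(1+x)\bigr)$, and $\lambda(1+x)/(1+x)-1$ has valuation $k+\delta$. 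Its leading term is $x(\lambda(\pi)/\pi-1)\cdot k$ up to a unit, with $\pi$ a uniformizer of $E$. Taking $k=\varsigma-\delta$, this leading term is nonzero precisely because $k=\varsigma-\delta\not\equiv0\bmod p$; it lands in level $\varsigma$, where $\chi$ is nontrivial. Hence $\chi^\lambda/\chi$ is nontrivial on $U_E^{\varsigma-\delta}$ and trivial on $U_E^{\varsigma-\delta+1}$, which gives the claim. One must also check $\varsigma>\delta$ for all $\lambda$: Lemma \ref{lem slope lower jump} gives $\delta\le l_r\le\varsigma$, and Carayol-ness excludes equality.

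\textbf{Maximization and evaluation.} The maximum of $\varsigma-\delta$ occurs at minimal $\delta$, which by Lemma \ref{min jump} equals $i_0$. Thus
\[\slope(\rho\otimes\rho^\vee)=\phi_{E/F}(\varsigma-i_0).\]
Lemma \ref{large slope} gives $\varsigma-i_0\ge l_r-l_1\ge j_r$, and $j_r=\psi_{E/F}$-preimage of $l_r$ puts us in the linear range; I should double-check this indexing. In that range Corollary \ref{cyclic phi} gives $\phi_{E/F}(x)=(x+w_{E/F})/p^r$. Therefore
\[\phi_{E/F}(\varsigma-i_0)=\phi_{E/F}(\varsigma)-\frac{i_0}{p^r}=\slope(\rho)-\frac{i_0}{p^r},\]
using Proposition \ref{slope-induction} for $\slope(\rho)=\phi_{E/F}(\varsigma)$. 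The same lemma also shows that the trivial summand, with slope at most $j_r\le\phi_{E/F}(\varsigma-i_0)$, does not dominate; this comparison needs care.

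\textbf{Main obstacle.} The hardest step is the exact valuation computation of $\chi^\lambda/\chi$, in particular proving nontriviality at level $\varsigma-\delta$. I would first try the explicit expansion $\lambda(u)/u$ for $u=1+a\pi^k$ and track the factor $k$ modulo $p$.
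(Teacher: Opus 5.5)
Your plan follows the paper's proof step for step: Mackey decomposition, the exact level of $\chi^{\lambda-1}$ from \cite[Lemma 4.5]{kil19} made sharp by Lemma~\ref{equality-check}, maximisation via Lemma~\ref{min jump}, then the linear tail of $\phi_{E/F}$. The step you marked ``I should double-check this indexing'' is a genuine gap, and the paper's proof has the same gap.

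\textbf{Where it fails.} The breaks of $\phi_{E/F}$ sit at the \emph{lower} jumps. Since $\psi_{E/F}(j_r)=l_r$, one has $\phi_{E/F}(x)=(x+w_{E/F})/p^r$ exactly for $x\ge l_r$; Corollary~\ref{cyclic phi} should read $l_k$ in place of $j_k$. Lemma~\ref{large slope} only gives $\varsigma-i_0\ge l_r-l_1\ge j_r$. That does not place $\varsigma-i_0$ in the linear range of $\phi_{E/F}$.

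What you need is $\varsigma-i_0\ge l_r$. This is the same inequality as $\phi_{E/F}(\varsigma-i_0)\ge j_r$, which is exactly what your trivial-summand comparison needs. That summand is the regular representation of ${\rm Gal}(E/F)$, whose slope is exactly $j_r$.

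Relatedly, Proposition~\ref{slope-induction} as stated fails for $\tau=1$. For a character $\theta$ of $\mathcal W_E$ the correct formula is $\slope(\Ind_{E/F}\theta)=\max\{j_r,\phi_{E/F}(\slope(\theta))\}$. So what your argument actually proves is
\[\slope(\rho\otimes\rho^\vee)=\max\{\,j_r,\ \phi_{E/F}(\varsigma-i_0)\,\},\]
and this equals $\slope(\rho)-i_0/p^r$ if and only if $\varsigma\ge l_r+i_0$.

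\textbf{The missing inequality is not implied by the hypotheses.} Carayol plus irreducibility give only $\varsigma\ge l_r+1$ and $\varsigma\not\equiv l_r\pmod p$. The inequality holds automatically when $i_0=1$ (for instance when $F/\mathbb{Q}_p$ is unramified), but $i_0\ge 2$ does occur:
\begin{enumerate}
\item[(a)] For $F=\mathbb{Q}_p(\sqrt p)$ with $p\ge 5$, there is a cyclic degree-$p$ extension with break $2$.
\item[(b)] Since $\zeta_p\notin F$, its character is the $p$-th power of a character of order $p^2$. That character cuts out a totally ramified cyclic $E/F$ of degree $p^2$ with $i_0=2$.
\item[(c)] Any $\chi$ with $\Sw(\chi)=l_2+1$ gives an irreducible $\rho$ with $\Sw(\rho)\equiv 1\pmod p$, so $\rho$ is Carayol.
\item[(d)] Then $\slope(\rho)=\phi_{E/F}(l_2+1)=j_2+p^{-2}$, while $\slope(\rho\otimes\rho^\vee)\ge\slope(\Ind_{E/F}1)=j_2>\slope(\rho)-2/p^2$.
\end{enumerate}
So the gap cannot be closed as the statement stands. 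Under the extra hypothesis $\slope(\chi)\ge l_r+i_0$ (for example $i_0=1$), your argument, and the paper's, goes through.
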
 
Before we proceed with the proof, here are some remarks.
\begin{remark}\label{positive slope}
	By the definition of Carayol representation, any one dimensional representation of $\mathcal{W}_F$ with positive Swan conductor 
	is Carayol. In Theorem \ref{thm slope adjoint wild}, the character $\chi$ is Carayol (cf. {\rm \cite[Proposition 7.2 (ii)]{kil19}}) and hence, ${\rm Sw}(\chi)>0$.
\end{remark}

\begin{remark}
	In Theorem \ref{thm slope adjoint wild}, the extension $E/F$ is totally and wildly ramified. So, by writing $G:={\rm Gal}(E/F)$, we have $G = G_0 = G_1$, and hence $i_0>0$. Thus $\slope(\rho\otimes\rho^\vee) < \slope(\rho)$ which is consistent with {\rm \cite[Proposition 3.4 (2)]{kil19}}.

\end{remark}




\begin{remark}\label{sanity check}
	We check that 
	\[\slope(\rho\otimes\rho^\vee)\geq {\rm Sw}(\rho\otimes\rho^\vee)/\, \dim (\rho\otimes\rho^\vee),\] 
	which shows that Theorem \ref{thm slope adjoint wild} is consistent with {\rm  \cite[Proposition 3.4 (iv)]{kil19}}. 
\end{remark}

\begin{proof}[Proof of Remark \ref{sanity check}]
	Since $\dim \,\rho = p^r$, it is enough to show that $p^{2r}\slope(\rho\otimes\rho^\vee) \geq {\rm Sw}(\rho\otimes\rho^\vee)$.
	By  Theorem \ref{thm slope adjoint wild} and {\rm \cite[Theorem 7.3]{kil19}}, we have
	\begin{eqnarray*}
		p^{2r}\slope(\rho\otimes\rho^\vee) - {\rm Sw}(\rho\otimes\rho^\vee)&=& p^{2r}\left(\slope(\rho)-\frac{i_0}{p^r}\right)-(p^r -1){\rm Sw}(\rho)\\
		&=& p^r\left({\rm sl}(\rho) -i_0\right)\\
		&=& p^r(\phi_{E/F}({\rm sl}(\chi)) - i_0)\\
		&\geq& p^r(\phi_{E/F}(l_r) - i_0)\\
		&=& p^r(j_r - i_0)\\
		& \geq & 0.\\
	\end{eqnarray*}
	The fourth inequality follows from Lemma  \ref{lem slope lower jump}. Finally, since $E/F$ is cyclic and Galois and $l_r$ is the largest lower jump of $G$, a small computation shows that $\phi_{E/F}(l_r)= j_r=i_0+i_1+\dots+i_{r-1}$, the largest upper jump of $G$ with $i_j$ are strictly positive integers {\rm \cite[Example, p. 76]{ser79}}. Thus the last equality follows. 
\end{proof}

\begin{proof}[Proof of Theorem \ref{thm slope adjoint wild}]
We denote $G:={\rm Gal}(E/F)$. By Mackey decomposition, we have
\[\rho\otimes\rho^\vee = \Ind_{E/F} \chi \otimes \Ind_{E/F}\chi^{-1} = \Ind_{E/F}\,\bigoplus_{\lambda\in G}\,\chi^{\lambda-1}=\bigoplus_{\lambda\in G}\, \Ind_{E/F}\,\chi^{\lambda-1}.\]
Here, by the local class field theory, we assume $\chi$ to be a character of $E^{\times}$ and $\chi^{\lambda -1}:E^{\times}\rightarrow \mathbb{C}^{\times}$ is defined by
$\chi^{\lambda -1}(x)=\chi(\dfrac{\lambda(x)}{x})$.
Hence $\chi^{\lambda- 1}=\chi^\lambda\chi^{-1}$. 

Now, by \cite[Proposition 3.4, p. 21]{kil19} we have
\begin{eqnarray}\label{sum-induction-slope}
	\slope(\rho\otimes\rho^\vee) 
	& = & {\max}_{\lambda\in G}\,\slope(\Ind_{E/F}\,\chi^{\lambda - 1})\nonumber\\ \nonumber
	& = & {\max}_{\lambda\in G}\,\phi_{E/F}\left(\slope(\chi^{\lambda - 1})\right)\\
	& = & \phi_{E/F}\left({\max}_{\lambda\in G}~\slope(\chi^{\lambda - 1})\right).\\\nonumber
\end{eqnarray}
The second equality above follows from \cite[Lemma 4.3]{aub-ply22}. Since $\phi_{E/F}$ is an increasing function, the third equality holds.

We now find \[{\max}_{\lambda\in G}~\slope(\chi^{\lambda - 1}).\]
For $\lambda \in G$, writing $\delta = v_E(\lambda) - 1$ and $\varsigma = {\rm sl}(\chi)$, by 
\cite[Lemma 4.5]{kil19} we have if $\varsigma \leq \delta$, then ${\rm sl}(\chi^{\lambda - 1}) = \Sw(\chi^{\lambda - 1}) = 0$. Since we are interested in the maximum of $\slope(\chi^{\lambda - 1})$, it is enough to concentrate on those $\lambda \in G$ for which $\varsigma > \delta$. Since $\rho = {\rm Ind}_{E/F}\chi$ is Carayol, for all $\lambda \in G$ with $\varsigma > \delta$, by Lemma \ref{equality-check} and 
\cite[Lemma 4.5]{kil19} we have 
\[\slope(\chi^{\lambda - 1}) = \varsigma - \delta = {\rm sl}(\chi) - (v_E(\lambda) - 1).\]
Thus by Lemma \ref{min jump}, we have 
\begin{eqnarray}\label{max-char-slope}
	{\max}_{\lambda\in G}~\slope(\chi^{\lambda - 1}) & = & {\rm sl}(\chi) - i_0,
\end{eqnarray}
where $i_0$ is the smallest lower jump in $G$.
Thus by \eqref{sum-induction-slope} and \ref{max-char-slope}, we have
\begin{eqnarray}\label{sl-is-phi}
	\slope(\rho\otimes\rho^\vee) = \phi_{E/F}({\rm sl}(\chi) - i_0).
\end{eqnarray}
Note that by Lemma \ref{large slope}, together with Lemma \ref{lem slope lower jump} we have ${\rm sl}(\chi)-i_0\geq j_r$ for $r\geq 2$. Therefore, by Corollary \ref{cyclic phi}, we have
\begin{eqnarray}\label{phi-at-large-value}
	\phi_{E/F}({\rm sl}(\chi)-i_0) & = &\dfrac{1}{p^r}\left({\rm sl}(\chi)-  i_0+w\right),\\\nonumber
\end{eqnarray}
where $w$ denotes the wild exponent of $E/F$.

Since $\rho = {\rm Ind}_{E/F}\chi$ is irreducible of dimension $p^r$, we have
\begin{eqnarray}\label{irr-swan-slope}
	{\rm sl}(\rho) & = & {\rm Sw}(\rho)/ \dim \rho \nonumber\\
	& = & 1/p^r(\Sw(\chi)+ w_{E/F}).
\end{eqnarray}
The first equality above follows from \cite[\S 2.1.2]{hen-oi24} and the second equality is obtained by \cite[Lemma (2), p. 1970]{bus-hen19}.

Finally, substituting the value of ${\rm sl}(\chi)$ obtained from \eqref{irr-swan-slope}, in \eqref{phi-at-large-value} and making use of \eqref{sl-is-phi}, we have
\[{\rm sl}(\rho\otimes\rho^\vee) = {\rm sl}(\rho) - i_0/p^r.\]
This completes the proof.
\end{proof}


\subsection{Proof of Theorem \ref{int thm slope adjoint any wild}}

Now we extend Theorem \ref{thm slope adjoint wild} for any totally wild extension.

\begin{theorem}\label{thm slope adjoint any wild}
Let $\rho$ be a Carayol representation of $\mathcal{W}_F$ of dimension $p^r$ with $r>1$. Suppose
$\rho=\Ind_{E/F}\,\chi$ for some totally ramified Galois extension $E/F$ of degree $p^r$ 
and a character $\chi$ of $\mathcal{W}_E$. Let $i_0$
denote the smallest lower jump of ${\Gal}(E/F)$. Then
\[\slope(\rho\otimes\rho^\vee)=\slope(\rho)-\frac{i_0}{p^r}.\] 
\end{theorem}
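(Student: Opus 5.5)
We follow the proof of Theorem \ref{thm slope adjoint wild}, replacing every place where cyclicity was used by a general argument. Write $G:={\rm Gal}(E/F)$, a $p$-group of order $p^r$ since $E/F$ is totally ramified and $\rho$ has dimension $p^r$.

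\textbf{Step 1: reduce to characters.} By Mackey decomposition,
\[
\rho\otimes\rho^\vee=\bigoplus_{\lambda\in G}\Ind_{E/F}\chi^{\lambda-1}.
\]
Combining \cite[Proposition 3.4]{kil19}, \cite[Lemma 4.3]{aub-ply22} and the monotonicity of $\phi_{E/F}$ exactly as in \eqref{sum-induction-slope}, we get
\[
\slope(\rho\otimes\rho^\vee)=\phi_{E/F}\Bigl(\max_{\lambda\in G}\slope(\chi^{\lambda-1})\Bigr).
\]

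\textbf{Step 2: compute the maximum.} We need a version of Lemma \ref{equality-check} for arbitrary $G$: if $\varsigma={\rm Sw}(\chi)>\delta=v_E(\lambda)-1$, then $\varsigma\not\equiv\delta \bmod p$.
\begin{itemize}
\item By Hasse--Arf-type congruences, the lower jumps of a $p$-group need not all be congruent mod $p$. However, for $\lambda$ in the first ramification group $G_{i_0}\setminus G_{i_0+1}$ with $i_0$ the smallest jump, we only need the congruence for that particular $\lambda$.
\item The Carayol condition gives $p\nmid \Sw(\chi)+\sum_{\mu\ne\id}(v_E(\mu)-1)$.
\item I would try to show $\sum_{\mu\neq \id}(v_E(\mu)-1)\equiv -i_0 \pmod p$ directly. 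The idea is to group $\mu$ by ramification level: the sets $G_{l}\setminus G_{l+1}$ have cardinalities $|G_l|-|G_{l+1}|$. These are divisible by $p$ except for the last nonzero layer, whose cardinality is $|G_{l_{\max}}|-1\equiv -1$. So the sum is congruent to $-l_{\max}$, not obviously to $-i_0$.
\item Therefore the fallback is to use \cite[Lemma 4.5]{kil19} to get only $\slope(\chi^{\lambda-1})\le\varsigma-\delta$, with equality when the congruence holds. Then attain the maximum via a $\lambda$ in a cyclic subgroup $H=\langle\lambda\rangle$ with $v_E(\lambda)=i_0+1$.
\end{itemize}
Restricting to $H$ and using transitivity of induction ($\Ind_{E^H/F}\Ind_{E/E^H}\chi$, where $\Ind_{E/E^H}\chi$ is Carayol by \cite[Proposition 7.2]{kil19}), the cyclic computation in Lemma \ref{equality-check} applies inside $H$. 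This yields
\[
\max_\lambda \slope(\chi^{\lambda-1})={\rm sl}(\chi)-i_0,
\]
using Lemma \ref{min jump}.

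\textbf{Step 3: evaluate $\phi_{E/F}$.} We need ${\rm sl}(\chi)-i_0\ge u$, where $u$ is the largest jump of $\psi_{E/F}$. Then Proposition \ref{psi-for-unique-jump} gives
\[
\phi_{E/F}(x)=\frac{x+w_{E/F}}{p^r}\quad\text{for } x\ge u.
\]
Combining this with \eqref{irr-swan-slope}, ${\rm sl}(\rho)=(\Sw(\chi)+w_{E/F})/p^r$, finishes the proof.

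By Lemma \ref{lem slope lower jump}, ${\rm sl}(\chi)\ge l_{\max}$, so it suffices to prove the analogue of Lemma \ref{large slope}: $l_{\max}-i_0\ge u$ for a general totally wild $p$-extension with $r\ge2$, $p$ odd. I would prove this by choosing a filtration $F=E_0\subset\cdots\subset E_r=E$ by degree-$p$ steps, as in Proposition \ref{explict psi}, and running the Case 1 / Case 2 estimate from \cite[Exercise 5(f)]{fv02} on the last two layers.

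This last inequality, together with the non-congruence in Step 2 for non-cyclic $G$, is the main obstacle. I expect the inequality to need the same bound $i_0\le pe_F/(p-1)$ and the same growth of the top jump as in the cyclic case.
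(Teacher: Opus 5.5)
There is a genuine gap in Step~3, shared with Step~1, and it is the same slip the paper makes. It cannot be closed, because the statement fails in general without an extra hypothesis. Your Step~2, by contrast, is correct and better than the paper's. For non-cyclic ${\rm Gal}(E/F)$ the paper only says ``proceeding as in Theorem~\ref{thm slope adjoint wild}''. That tacitly reuses Lemma~\ref{equality-check}, whose proof needs all lower jumps to be congruent mod $p$. You instead pass to $H=\langle\lambda\rangle$ with $v_E(\lambda)=i_0+1$, note that ${\rm Ind}_{E/E^H}\chi$ is Carayol (e.g.\ since ${\rm Sw}(\rho)\equiv{\rm Sw}({\rm Ind}_{E/E^H}\chi)\pmod p$), and apply the cyclic lemma there. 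This does give ${\rm Sw}(\chi)\not\equiv i_0\pmod p$, hence $\max_\lambda{\rm sl}(\chi^{\lambda-1})={\rm sl}(\chi)-i_0$.

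\textbf{The wrong threshold in Step~3.} Proposition~\ref{psi-for-unique-jump} gives $\psi_{E/F}(x)=p^rx-w_{E/F}$ for $x\ge u$. Inverting, $\phi_{E/F}(y)=(y+w_{E/F})/p^r$ holds exactly for $y\ge\psi_{E/F}(u)=l_{\max}$, the largest \emph{lower} jump. For $y<l_{\max}$ one has $\phi_{E/F}(y)<(y+w_{E/F})/p^r$, e.g.\ from $\phi_{E/F}(y)=p^{-r}\sum_{s\in G}\min(v_E(s),y+1)-1$. So what you need is ${\rm sl}(\chi)-i_0\ge l_{\max}$. The inequality $l_{\max}-i_0\ge u$ you plan to prove aims at the wrong threshold; this is the paper's Lemma~\ref{large slope}, and Corollary~\ref{cyclic phi} likewise puts the breaks of $\phi_{E/F}$ at the $j_k$ instead of the $l_k$.

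\textbf{Step~1 is also inaccurate.} The $\lambda=\id$ summand ${\rm Ind}_{E/F}\mathbf{1}$ has slope $u$, not $\phi_{E/F}(0)=0$. In general ${\rm sl}({\rm Ind}_{E/F}\theta)=\max\bigl(u,\phi_{E/F}({\rm sl}\,\theta)\bigr)$ for $E/F$ totally ramified Galois. Combined with your Step~2 this gives ${\rm sl}(\rho\otimes\rho^\vee)=\max\bigl(u,\phi_{E/F}({\rm Sw}(\chi)-i_0)\bigr)$. This equals ${\rm sl}(\rho)-i_0/p^r=({\rm Sw}(\chi)-i_0+w_{E/F})/p^r$ if and only if ${\rm Sw}(\chi)\ge l_{\max}+i_0$.

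\textbf{Why the needed inequality fails.} Irreducibility forces ${\rm Sw}(\chi)\ge l_{\max}$. By your congruence $w_{E/F}\equiv -l_{\max}\pmod p$, the Carayol condition only excludes ${\rm Sw}(\chi)\equiv l_{\max}\pmod p$. Concretely, take $E/F$ cyclic with $i_0\ge 2$, for instance of degree $9$ over $\mathbb{Q}_3(\sqrt3)$ with first jump $2$ (such an extension exists by class field theory). Take ${\rm Sw}(\chi)=l_{\max}+1$. All $v_E(\lambda)-1$ are congruent to $l_{\max}$ mod $p$, so $\rho$ is irreducible and Carayol. Then ${\rm sl}(\rho)=u+1/p^r$, and the asserted value $u-(i_0-1)/p^r$ lies strictly below $u\le{\rm sl}(\rho\otimes\rho^\vee)$.

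So the theorem needs an extra hypothesis such as ${\rm Sw}(\chi)\ge l_{\max}+i_0$, which is automatic when $i_0=1$. Under that hypothesis, your three steps go through once Step~1 uses the corrected induction formula and Step~3 uses the threshold $l_{\max}$.
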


\begin{proof}[Proof of Theorem \ref{thm slope adjoint any wild}]
Note that ${\rm Gal}(E/F)$ is of order $p^r$, and hence it is supersolvable. Therefore there exists a sequence of Galois extensions such that $E=E_0\supseteq E_1\supseteq \cdots \supseteq E_{s-1} \supseteq F=E_s$ with ${\rm Gal}(E/F) \trianglelefteq {\rm Gal}(E/E_{s-1})\trianglelefteq 
\cdots \trianglelefteq {\rm Gal}(E/E_i)\trianglelefteq \cdots \trianglelefteq {\rm Gal}(E/E_1)\trianglelefteq \{e\}$ and $ {\rm Gal}(E/E_i)/ {\rm Gal}(E/E_{i-1}) \simeq {\rm Gal}(E_{i-1}/E_i)$ is cyclic of order $p^{t_i}$ for all $i = 1,2, \dots, s$. Let $\phi_i$ denote the Herbrand function for ${\rm Gal}(E_{i-1}/E_i)$, and $l_i$ and $l_i'$ denote the associated  largest and smallest lower jumps for $i=1,2,\dots,s$. Let $\phi_0$ denote the  Herbrand function of ${\rm Gal}(E/F)$ and  $l_0$ and $l_0'$ denote the associated largest and smallest lower jumps. We have 
\begin{eqnarray}\label{phi-decomposition}
	\phi_0 &=& \phi_s \circ\phi_{s-1}\cdots \phi_i\circ \phi_{i-1}\cdots \phi_2\circ \phi_1.
\end{eqnarray}
We prove that
\begin{eqnarray}\label{master-ineq}
	\phi_i'(l_0 - l_0')&\geq& l_i - l_i'
\end{eqnarray}
for $\phi_i'=\phi_{i-1}\circ \phi_{i-2}\circ\dots\circ \phi_2\circ\phi_1$ and $i=1,\dots,s$. 
For $i=1$, we use the convention that $\phi_1'$ denotes the identity map.

Note that $\phi_i'$ be the Herbrand function for ${\rm Gal}(E/E_{i-1})$. We denote its associated smallest and largest lower jumps by $l_{E/E_{i-1}}'$ and $l_{E/E_{i-1}}$ respectively. We also denote the smallest and largest lower jumps of ${\rm Gal}(E/E_i)$ by  $l_{E/E_i}'$ and $l_{E/E_i}$ respectively. 

Since ${\rm Gal}(E/E_{i-1})\subseteq {\rm Gal}(E/F)$, by Lemma \ref{sub-quo-lower-jump} we have 
\begin{eqnarray}\label{smallest-lower-jump-ineq}
	l_0' &\leq & l_{E/E_{i-1}}'.
\end{eqnarray}
Considering the Galois extensions $E\supseteq E_{i-1}\supseteq E_i$, we have 
\[\phi_{E/E_i} = \phi_{E_{i-1}/E_i}\circ \phi_{E/E_{i-1}}=\phi_i \circ \phi_i'.\]
By Lemma \ref{sub-quo-lower-jump} we have 
$\phi_i'(l_{E/E_{i-1}}') \leq l_i'$, which by \eqref{smallest-lower-jump-ineq} gives 
\begin{eqnarray}\label{left-part}
	\phi_i'(l_0') \leq \phi_i'(l_{E/E_{i-1}}') \leq l_i'.
\end{eqnarray}
We observe that 
\begin{eqnarray}\label{largest-lower-jump-ineq}
	l_i &\leq & \phi_i'(l_{E/E_i}).
\end{eqnarray}
Indeed, for any $\epsilon>0$, we have 
\begin{eqnarray*}
	{\rm Gal}(E_{i-1}/E_i)_{\phi_i'(l_{E/E_i})+\epsilon} 
	&=& \left({\rm Gal}(E/E_i)/{\rm Gal}(E/E_{i-1})\right)_{\phi_i'(l_{E/E_i})+\epsilon}\\
	&= &\left({\rm Gal}(E/E_i)/{\rm Gal}(E/E_{i-1})\right)_{\phi_i'(l_{E/E_i} + \delta)} \\
	&=& \left({\rm Gal}(E/E_i)_{l_{E/E_i} + \delta}{\rm Gal}(E/E_{i-1})\right)/{\rm Gal}(E/E_{i-1}) \\
	&= &1.
\end{eqnarray*}
The last but one equality follows from \cite[Lemma 5]{ser79}.
Since ${\rm Gal}(E/E_i) \subseteq {\rm Gal}(E/F)$, we have $l_{E/E_i}\leq l_0$. This by \eqref{largest-lower-jump-ineq} gives
\begin{eqnarray}\label{right-part}
	l_i\leq  \phi_i'(l_{E/E_i}) \leq  \phi_i'(l_0).
\end{eqnarray}
By \eqref{left-part} and \eqref{right-part}, we have
\[\phi_i'(l_0') \leq \phi_i'(l_{E/E_{i-1}}') \leq l_i' \leq  l_i\leq  \phi_i'(l_{E/E_i}) \leq  \phi_i'(l_0).\]
This gives
\[l_i -l_i'\leq\phi_i'(l_0) - \phi_i'(l_0') \leq \phi_i'(l_0) - l_0'\leq  l_0 - l_0'.\]
Thus we have
\[\phi_{E/E_i}(l_i -l_i')\leq\phi_{E/E_i}(l_0 - l_0').\]
Since $\phi_{E/E_i}=\phi_i\circ\phi_i'$ and $\phi_i\leq \phi_{E/E_i}\leq \phi_i'$, this implies that 
\[\implies \phi_i(l_i-l_i') \leq (\phi_i\circ \phi_i')( l_i -l_i')\leq (\phi_i\circ \phi_i')( l_0-l_0'),\]
which finally gives \eqref{master-ineq}
\[ l_i -l_i' \leq \phi_i'( l_0-l_0').\]
By Lemma \ref{lem slope lower jump} we have ${\rm sl}(\chi) \geq l_0$. So we have ${\rm sl}(\chi) - l_0' \geq l_0 -l_0'$ and hence, $\phi_i'({\rm sl}(\chi) - l_0') \geq \phi_i'(l_0 -l_0')$, which by the above inequality gives
\begin{eqnarray}\label{main-ineq}
	l_i -l_i'& \leq& \phi_i'({\rm sl}(\chi)-l_0'),
\end{eqnarray}
for $i=1,2,\dots,s$.

This further by making use of Lemma \ref{large slope} and Corollary \ref{cyclic phi} gives
\begin{eqnarray}\label{all-phi}
	\phi_i\left(\phi_i'({\rm sl}(\chi) - l_0')\right) &=& \dfrac{\phi_i'({\rm sl}(\chi) - l_0')}{p^{t_i}} + \dfrac{w_i}{p^{t_i}},
\end{eqnarray}
where $w_i$ denotes the wild exponent of $E_{i-1}/E_i$ for all $i = 1,\dots,s$.

Proceeding as in Theorem \ref{thm slope adjoint wild}, we have
\[\slope(\rho\otimes\rho^\vee) = \phi_{E/F}({\rm sl}(\chi) - l_0'),\]
where $l_0'$ is the smallest lower jump of ${\rm Gal}(E/F)$.
Recall that we denote $\phi_0=\phi_{E/F}$. So by \eqref{phi-decomposition} we have $\phi_0({\rm sl}(\chi) - l_0')$ equals
\begin{eqnarray*}\label{phi-computation}
	\phi_0({\rm sl}(\chi) - l_0') &=& \phi_s \circ\phi_{s-1}\cdots \phi_i\circ \phi_{i-1}\cdots \phi_2\circ \phi_1({\rm sl}(\chi) - l_0')\\
	&=& (\phi_s\circ\phi_s')({\rm sl}(\chi) - l_0'))\\
	&=& \phi_s\left(\phi_s'({\rm sl}(\chi) - l_0')\right)\\
	&=& \dfrac{\phi_s'({\rm sl}(\chi) - l_0')+w_s}{p^{t_s}}\\
	&=& \dfrac{w_s}{p^{t_s}} + \dfrac{1}{p^{t_s}}\phi_{s-1}\left(\phi_{s-1}'({\rm sl}(\chi) - l_0')\right)\\
	&=& \dfrac{w_s}{p^{t_s}} + \dfrac{1}{p^{t_s}}\left(\dfrac{\phi_{s-1}'({\rm sl}(\chi) - l_0')+w_{s-1}}{p^{t_{s-1}}}\right)\\
	&=&  \dfrac{w_s}{p^{t_s}} +  \dfrac{w_{s-1}}{p^{t_s+t_{s-1}}} + \dfrac{1}{p^{t_s+t_{s-1}}}\phi_{s-2}\left(\phi_{s-2}'({\rm sl}(\chi) - l_0')\right)\\
	&\vdots& \\
	&=& \dfrac{w_s}{p^{t_s}} +  \dfrac{w_{s-1}}{p^{t_s+t_{s-1}}} + \dots +\dfrac{w_{3}}{p^{t_s+t_{s-1}+\dots+t_3}} + \dfrac{1}{p^{t_s+t_{s-1}+\dots+t_3}}\phi_{2}\left(\phi_{2}'({\rm sl}(\chi) - l_0')\right)\\
	&=& \dfrac{w_s}{p^{t_s}} +  \dfrac{w_{s-1}}{p^{t_s+t_{s-1}}} + \dots +\dfrac{w_{2}}{p^{t_s+t_{s-1}+\dots+t_2}} + \dfrac{1}{p^{t_s+t_{s-1}+\dots+t_2}}\phi_{2}'({\rm sl}(\chi) - l_0')\\
	&=& \dfrac{w_s}{p^{t_s}} +  \dfrac{w_{s-1}}{p^{t_s+t_{s-1}}} + \dots +\dfrac{w_{2}}{p^{t_s+t_{s-1}+\dots+t_2}} + \dfrac{1}{p^{t_s+t_{s-1}+\dots+t_2}}\phi_{1}({\rm sl}(\chi) - l_0')\\
	&=& \dfrac{w_s}{p^{t_s}} +  \dfrac{w_{s-1}}{p^{t_s+t_{s-1}}} + \dots +\dfrac{w_{1}}{p^{t_s+t_{s-1}+\dots+t_1}} + \dfrac{{\rm sl}(\chi) - l_0'}{p^{t_s+t_{s-1}+\dots+t_1}}\\
	&=& \dfrac{1}{p^{\sum_{i=1}^{s}t_i}}\left(p^{\sum_{i=1}^{s-1}t_i}w_s + p^{\sum_{i=1}^{s-2}t_i}w_{s-1}+\dots+ p^{t_1}w_2 + w_1 + {\rm sl}(\chi) - l_0' \right)\\
	&=& \dfrac{w+{\rm sl}(\chi)-l_0'}{p^r}\\
	&=& \dfrac{p^r {\rm sl}(\rho) -l_0'}{p^r}\\
	&=& {\rm sl}(\rho) - \dfrac{l_0'}{p^r}.\\
\end{eqnarray*}
Note that $l_0'$ is denoted by $i_0$ in the statement of the theorem. The last but third equality follows from \cite[Lemma]{bus-hen19} (1) by denoting $w$ as the wild exponent of $E/F$. The last but second equality holds true by \cite[Lemma]{bus-hen19} (2).
\end{proof}

\section{Proof of Theorems \ref{int thm slope adjoint tame} and \ref{int thm slope carayol}}\label{sec slope gen}
In this section, we compute the slope of the adjoint representation of any Carayol representation (see Theorems \ref{int thm slope adjoint tame} and \ref{int thm slope carayol} in the introduction). Let us assume that $\rho$ is a Carayol representation of $\mc W_F$ of dimension $mp^r$ with $(m,p)=1$.


\subsection{Some lemmas}
This section consists of a series of lemmas required to prove Theorems \ref{int thm slope adjoint tame} and \ref{int thm slope carayol}. 

A representation of $\mathcal{W}_F$ is said to be \textbf{primitive} if it cannot be obtained by induction from a proper subgroup of $\mathcal{W}_F$ \cite[\S 3]{bus-hen14}. For example, every epipelagic representation of degree $p^r$ is primitive (cf. \cite[\S 3 Proposition]{bus-hen14}. 
We state the following important lemma on primitive representations. 
\begin{lemma}{\rm\cite[p. 37  l. -4, -2]{kil19}}\label{inducing character}
Let $\sigma$ be a Carayol primitive representation of $\mathcal{W}_F$ of degree $p^r$. Then  
\begin{enumerate}
	\item there exists a tamely ramified extension $T/F$ such that $\sigma\mid_{\mc W_T}$ is irreducible.
	\item there exists a totally wildly ramified finite Galois extension $K/T$ and a character $\chi$ of $\mc W_K$ such that $\sigma\mid_{\mc W_T}={\rm Ind}_{K/T}\chi$. 
\end{enumerate} 
\end{lemma}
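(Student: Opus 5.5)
The statement is cited from \cite{kil19}, so the plan is to reconstruct the argument there, using the standard structure theory of irreducible representations of $\mathcal{W}_F$ restricted to the wild inertia $\mathcal{P}_F$. For part (1) I would start from the fact, recalled in the introduction, that a Carayol representation is tamely irreducible, so $\sigma\mid_{\mathcal{I}_F}$ is irreducible. Next I restrict $\sigma$ to $\mathcal{P}_F$. Since $\mathcal{P}_F$ is a normal pro-$p$ subgroup, Clifford theory gives
\[\sigma\mid_{\mathcal{P}_F}\simeq e\,(\alpha_1\oplus\cdots\oplus\alpha_t)\]
for a single $\mathcal{W}_F$-orbit of irreducible representations $\alpha_i$ of $\mathcal{P}_F$. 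Here $et\dim\alpha=p^r$, and $\dim\alpha$ is a power of $p$. Primitivity of $\sigma$ forces $t=1$: otherwise $\sigma$ would be induced from the stabilizer of $\alpha_1$, which is a proper subgroup. So $\sigma\mid_{\mathcal{P}_F}=e\,\alpha$ with $\alpha$ stable under $\mathcal{W}_F$.

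Now let $T/F$ be a finite tamely ramified extension. Because $\mathcal{W}_T\supset\mathcal{P}_F$ and $\mathcal{W}_T/\mathcal{P}_F$ is the tame quotient, any $T$ whose residue degree is prime to the relevant finite data works. Concretely, I would choose $T$ so that $[T:F]$ is prime to $p$ and $T$ is large enough that $\alpha$ extends to a representation $\tilde\alpha$ of $\mathcal{W}_T$. Then
\[\sigma\mid_{\mathcal{W}_T}\simeq\tilde\alpha\otimes\beta,\]
where $\beta$ is an irreducible representation of $\mathcal{W}_T/\mathcal{P}_F$ of dimension $e$. Since $\mathcal{W}_T/\mathcal{P}_T$ is metabelian with abelian inertia part, after a further tame enlargement of $T$ the representation $\beta$ could be trivialized in dimension, but that would destroy irreducibility. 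So instead I keep $T$ minimal with respect to being prime-to-$p$ over $F$, and I use tame irreducibility to see that $\sigma\mid_{\mathcal{W}_T}$ stays irreducible. The restriction to $\mathcal{I}_F\cap\mathcal{W}_T$, which has index prime to $p$ in $\mathcal{I}_F$, remains irreducible because the dimension $p^r$ is a power of $p$ and the constituents are permuted by a group of order prime to $p$. This gives (1).

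For part (2) I would apply the Clifford/Mackey analysis again, but now to $\tau:=\sigma\mid_{\mathcal{W}_T}$, which is an irreducible representation of dimension $p^r$. The key input is the theorem of Koch (as used in \cite{bus-hen14}, \cite{kil19}) on $p$-power dimensional representations. After a suitable tame base change, its image modulo a central character is a $p$-group, hence nilpotent. An irreducible representation of a finite $p$-group, and more generally of a group that is nilpotent modulo the center, is monomial. So there is a subgroup $\mathcal{W}_K\subset\mathcal{W}_T$ of index $p^r$ and a character $\chi$ with $\tau={\rm Ind}_{K/T}\chi$.

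I expect the main obstacle to be arranging that $K/T$ is \emph{Galois} and totally wildly ramified. Wild ramification follows from comparing dimensions: the index is $p^r$, and the tame part is already absorbed into $T$, so the residue and tame degrees of $K/T$ must be trivial. To get the Galois property I would try first to take $K$ as the fixed field of a normal abelian subgroup. Concretely, I would use a maximal normal abelian subgroup of the $p$-group image that contains the center; its characters induce irreducibly precisely when the orbit has full length $p^r$. If this fails, I would fall back on the explicit construction in \cite{kil19}.
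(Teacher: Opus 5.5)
\textbf{The gap is in part (2), at the point you flag yourself.} Monomiality of a group that is nilpotent modulo scalars only gives \emph{some} open subgroup $H\subset\mathcal{W}_T$ of index $p^r$ and a character $\chi$ with $\sigma|_{\mathcal{W}_T}\cong{\rm Ind}_H^{\mathcal{W}_T}\chi$. Nothing makes $H$ normal, so you do not get $K/T$ Galois.

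Your proposed repair does not work for a general $p$-group either. For a maximal normal abelian subgroup $N$, the stabilizer of a character of $N$ can be strictly larger than $N$ and non-normal, with a non-linear representation lying over that character.

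The ramification claim also needs more than ``comparing dimensions''. Index $p^r$ does not rule out an unramified $p$-part in $K/T$. What you need is that $H$ contains $\ker\bar\sigma\cap\mathcal{W}_T$, where $\bar\sigma$ is the projective representation attached to $\sigma$. You defer all of this to \cite{kil19}, so the actual content of the lemma is not proved.

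\textbf{The missing input is Koch's structure theorem for primitive representations.}

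First, primitivity gives $e=1$ in your Clifford decomposition, not just $t=1$. Write $\sigma\cong\tilde\alpha\otimes\beta$ with $\beta$ projective on $\mathcal{W}_F/\mathcal{P}_F$. The image of tame inertia under $\beta$ is cyclic, so its preimage in ${\rm GL}_e(\mathbb{C})$ is a normal abelian group whose isotypic components are permuted transitively. Primitivity forces this group to act by scalars. Then $\beta$ is an irreducible projective representation of the cyclic group $\mathcal{W}_F/\mathcal{I}_F$, so $e=1$. Hence $\sigma|_{\mathcal{P}_F}$ is irreducible, and (1) holds for \emph{every} tame $T$ because $\mathcal{P}_T=\mathcal{P}_F$. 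Your worry about ``destroying irreducibility'' and keeping $T$ minimal is unfounded.

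Second, let $Q=\sigma(\mathcal{P}_F)$. Every characteristic abelian subgroup of $Q$ is normal in $\sigma(\mathcal{W}_F)$, hence scalar by primitivity. By P.~Hall's theorem ($p$ odd), $A:=Q/Z(Q)=\bar\sigma(\mathcal{P}_F)$ is elementary abelian of order $p^{2r}$. Since $Z(Q)$ is exactly the scalars in $Q$, the commutator pairing on $A$ is non-degenerate.

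\textbf{Construction of $T$, $K$ and $\chi$.}
\begin{enumerate}
\item Let $L/F$ be the finite Galois extension with $\mathcal{W}_L=\ker\bar\sigma$, and take $T$ to be its maximal tamely ramified subextension $L_0$. Then $\bar\sigma(\mathcal{W}_T)={\rm Gal}(L/L_0)=A$.
\item Choose a maximal isotropic $M\subset A$, of order $p^r$, and let $\mathcal{W}_K$ be its preimage in $\mathcal{W}_T$. It is normal in $\mathcal{W}_T$ because $A$ is abelian.
\item $\sigma(\mathcal{W}_K)$ is abelian, because commutators in $\sigma(\mathcal{W}_T)$ are values of the pairing, which vanishes on $M$. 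So $\sigma|_{\mathcal{W}_K}$ contains a character $\chi$.
\item Frobenius reciprocity, irreducibility of $\sigma|_{\mathcal{W}_T}$, and $[\mathcal{W}_T:\mathcal{W}_K]=p^r=\dim\sigma$ together give $\sigma|_{\mathcal{W}_T}\cong{\rm Ind}_{K/T}\chi$.
\item Since $T\subset K\subset L$ and $L/L_0$ is totally wildly ramified, $K/T$ is totally wildly ramified and Galois, with group $A/M\cong(\mathbb{Z}/p\mathbb{Z})^r$.
\end{enumerate}
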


The next lemma says that any irreducible representation $\rho$ of $\mc W_F$ of dimension $p^r$ is either primitive or induced from a primitive representation.
\begin{lemma}\label{p-power-reps} 
Let $\rho$ be an irreducible representation of $\mc W_F$ of dimension $p^r$. Then either $\rho$ is primitive or $\rho={\rm Ind}_{E/F}\tau$ where $\tau$ is a primitive representation of $\mc W_E$ and $E/F$ is a finite totally wild 
extension.
\end{lemma}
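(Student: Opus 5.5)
Let $\rho$ be irreducible of dimension $p^r$. The plan is a descent on the dimension of the inducing representation, along the chain of subgroups of $\mc W_F$ from which $\rho$ can be induced. If $\rho$ is primitive there is nothing to prove. If not, then by definition $\rho={\rm Ind}_{H}^{\mc W_F}\tau_1$ for some proper open subgroup $H$. We write $H=\mc W_{E_1}$ for a finite separable extension $E_1/F$ with $[E_1:F]>1$, and $\tau_1$ an irreducible representation of $\mc W_{E_1}$ (irreducible because $\rho$ is). Then
\[\dim\rho=[E_1:F]\cdot\dim\tau_1=p^r,\]
so $[E_1:F]=p^{a}$ with $a\geq 1$ and $\dim\tau_1=p^{r-a}$. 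If $\tau_1$ is primitive we stop. Otherwise we repeat the argument for $\tau_1$ over $E_1$. The dimension strictly drops at each step, so the process terminates. By transitivity of induction we get $\rho={\rm Ind}_{E/F}\tau$ with $\tau$ primitive (a character is trivially primitive), $E/F$ a tower of the $E_i$, and $[E:F]$ a power of $p$. It remains to show that $E/F$ is totally wildly ramified.

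For this step we use the irreducibility of $\rho$ together with the fact that $\rho$ is tamely irreducible. In our application $\rho$ is Carayol, and Carayol representations are tamely irreducible (\cite[Proposition 7.2]{kil19}). Let $E_0$ be the maximal unramified subextension of $E/F$ and set $f=[E_0:F]$. Then
\[{\rm Ind}_{E/F}\tau={\rm Ind}_{E_0/F}\,{\rm Ind}_{E/E_0}\tau.\]
If $f>1$, then restricting ${\rm Ind}_{E_0/F}(\cdot)$ to inertia (Mackey, using $\mc W_{E_0}\mc I_F=\mc W_F$ trivially and $\mc I_F\subset\mc W_{E_0}$) gives a direct sum of $f$ conjugates. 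So $\rho|_{\mc I_F}$ is reducible, which contradicts tame irreducibility. Hence $f=1$. Since $[E:F]$ is a power of $p$, the tame part of the ramification index divides both $[E:F]$ and a number prime to $p$, so it is $1$. Therefore $E/F$ is totally wildly ramified. To be careful at this point, I would carry out this argument step by step in the tower, so that each $E_i/E_{i-1}$ has $p$-power degree with trivial residue extension and trivial tame part.

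The main obstacle is whether the lemma is meant for arbitrary irreducible $\rho$ of dimension $p^r$. If $\rho$ is not tamely irreducible, an unramified $E/F$ of degree $p$ can occur, so the lemma as literally stated needs the standing hypothesis of this section that $\rho$ is Carayol. Alternatively, one can argue through an unramified twist: if $\rho\simeq\rho\otimes\omega$ for an unramified character $\omega$ of order $p$, then $\rho$ is induced from the unramified extension. In the proof I would state explicitly that the Carayol (hence tamely irreducible) hypothesis is being invoked at this step.
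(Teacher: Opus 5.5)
The paper states this lemma without proof, so there is no argument of the authors to compare with. Your proposal supplies one, and it is correct once the hypothesis you flag is added.

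The descent is sound. An inducing subgroup is automatically open of finite index, since $\rho$ is smooth and finite-dimensional. Irreducibility passes to the inducing representation, the index is a power of $p$, and transitivity of induction gives $\rho={\rm Ind}_{E/F}\tau$ with $\tau$ primitive and $[E:F]$ a power of $p$. Tame irreducibility then forces the maximal unramified subextension $E_0$ to equal $F$. Hence $e(E/F)=[E:F]$ is a power of $p$, and $E/F$ is totally wildly ramified. This global argument already suffices; there is no need to redo it step by step in the tower.

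Your caveat is also right: for an arbitrary irreducible $\rho$ the lemma is false. Take $E/F$ unramified of degree $p$ and a tamely ramified character $\chi$ of $E^\times$ whose restriction to $k_E^\times$ is not fixed by $x\mapsto x^q$. Then $\rho={\rm Ind}_{E/F}\chi$ has these properties:
\begin{enumerate}
\item it is irreducible of dimension $p$;
\item it is not primitive;
\item it is trivial on $\mc P_F$.
\end{enumerate}
But any ${\rm Ind}_{E'/F}\tau'$ with $E'/F$ a proper totally wild extension is nontrivial on $\mc P_F$, because $\mc P_F\not\subset s\mc W_{E'}s^{-1}$ for every $s\in\mc W_F$. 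Equivalently, it has positive Swan conductor. The paper only applies the lemma to Carayol representations (in Theorem \ref{thm slope carayol} and Proposition \ref{tame-slope-invariant}), so adding that hypothesis is the correct repair.

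Two minor points.
\begin{enumerate}
\item In the Mackey step, your parenthetical ``$\mc W_{E_0}\mc I_F=\mc W_F$'' is wrong for $f>1$. Since $\mc I_F\subset\mc W_{E_0}\trianglelefteq\mc W_F$, one has $\mc I_F\mc W_{E_0}=\mc W_{E_0}$. It is exactly because there are $f$ double cosets $\mc I_F\backslash\mc W_F/\mc W_{E_0}$ that $\rho|_{\mc I_F}$ splits into $f$ summands. More simply, $\rho|_{\mc W_{E_0}}$ is already a sum of $f$ conjugates.
\item The field $E$ you produce need not be Galois over $F$. That is all the lemma claims, but be aware that the paper later decomposes $\rho\otimes\rho^\vee$ over $G(E/F)$, which uses more than this lemma provides.
\end{enumerate}
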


\begin{lemma}\label{same-tame-jump}
Let $E/T/F$ be Galois extension of fields with $E/T$ totally wild of degree $p^r$ and $T/F$ totally tame. Then, the smallest nonzero lower jumps in $G(E/T)$ and $G(E/F)$ are the same.
\end{lemma}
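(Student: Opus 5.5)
The lemma concerns $G=\mathrm{Gal}(E/F)$ with $H=\mathrm{Gal}(E/T)$ normal in $G$. The plan is to compare the lower ramification filtration of $G$ with that of $H$, using the fact that lower numbering is compatible with passage to subgroups: by \cite[Chapter IV, Proposition 2]{ser79},
\[H_i = G_i\cap H \quad \text{for all } i\geq -1 .\]
Both groups are filtered with respect to the valuation $v_E$ of the top field $E$, so no rescaling of indices is needed.

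First I identify the wild part of $G$. Since $T/F$ is totally tamely ramified and $E/T$ is totally wildly ramified, $E/F$ is totally ramified, so $G=G_0$. The wild inertia $G_1$ is the unique Sylow $p$-subgroup of $G_0$, and $G_0/G_1$ is cyclic of order prime to $p$. The order of $G$ is $p^r\cdot[T:F]$ with $([T:F],p)=1$. The subgroup $H$ has order $p^r$, and $H=H_0=H_1$ because $E/T$ is totally wild. Since $H\subseteq G_1$ and $|G_1|$ is the $p$-part of $|G|$, namely $p^r$, we get $G_1=H$. Consequently $G_i=G_i\cap G_1=G_i\cap H=H_i$ for every $i\geq 1$.

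Next I read off the jumps. For $G$, the only possible jump below $1$ is at $i=0$, where $G_0\neq G_1$ if $T\neq F$. Hence the smallest nonzero lower jump of $G$ is the least $i\geq 1$ with $G_i\neq G_{i+1}$. For $H$, every jump is at least $1$, since $H_0=H_1$ and we may assume $E\neq T$. So its smallest lower jump, which is also its smallest nonzero one, is the least $i\geq 1$ with $H_i\neq H_{i+1}$. Because $G_i=H_i$ for all $i\geq 1$, the two minima agree.

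The only delicate point I expect is bookkeeping. One must make sure both filtrations use $v_E$, and not $v_T$ or a Herbrand-transformed index; that is exactly what Serre's subgroup compatibility provides. One must also handle the degenerate cases $T=F$, which is trivial, and $r=0$, where there is no nonzero jump and the statement is vacuous.
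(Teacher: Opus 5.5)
Your proof is correct, but it establishes the key fact by a different route from the paper. Both arguments hinge on showing $G_1\subseteq H$ and then using $H_i=G_i\cap H$ to get $G_i=H_i$ for all $i\geq 1$.

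The paper obtains $G_1\subseteq H$ from Herbrand's theorem applied to the quotient $G/H\simeq \mathrm{Gal}(T/F)$. Since $E/T$ is totally wild, $\phi_{E/T}(1)=1$, and since $T/F$ is tame, $(G/H)_1=1$; hence $1=(G/H)_{\phi_{E/T}(1)}=G_1H/H$. The paper first checks that the smallest nonzero jump of $H$ is a jump of $G$, and then rules out a smaller one by contradiction.

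You instead get the stronger equality $G_1=H$ by pure group theory. Because $E/F$ is totally ramified, $G=G_0$, so $G_1$ is its unique Sylow $p$-subgroup and has order $p^r=|H|$; together with $H=H_1\subseteq G_1$ this forces $G_1=H$.

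Your route avoids Herbrand functions entirely. It also does not need $H$ to be normal in $G$, so $T/F$ need not even be Galois, and it is direct rather than by contradiction. The paper's step $G_1\subseteq H$ does not use your order count or the total ramification of $E/F$; it only needs $T/F$ tame and $E/T$ without tame ramification. In exchange, it requires $H\trianglelefteq G$ in order to form the quotient filtration.
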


\begin{proof}
Let $G=G(E/F)$ and $H=G(E/T)$. Let $i_0$ be the 
smallest nonzero lower jump in $H$. Then $H_{i_0}\neq H_{i_0+1}$, i.e., $G_{i_0}\cap H\neq G_{i_0+1}\cap H$, which shows that $G_{i_0}\neq G_{i_0+1}$. So $i_0$ is a lower jump in $G$. It remains to show that $i_0$ is the smallest nonzero lower jump in $G$. Suppose not. There exists $0\neq j<i_0$ such that $G_j\neq G_{j+1}$. Since $T/F$ is tame, we have $(G/H)_1 = (G(T/F))_1 = 1$.
Since $E/T$ is totally wild, we have $i_0\geq 1$ and, hence $\phi_{E/T}(1)=1$.
So we have (cf. \cite[Lemma 5]{ser79})
\[1 = (G/H)_1 = (G/H)_{\phi_{E/T}(1)}=G_1H/H,\]
which gives $G_1\subset H$. Now $$H_k=H\cap G_k=H\cap (G_1\cap G_k)=(H\cap G_1)\cap G_k=G_k$$
for all $k\neq 0$. Thus $G_j\neq G_{j+1}$ implies $H_j\neq H_{j+1}$, which is a contradiction as $j<i_0$.
\end{proof}

\begin{lemma}\label{same-tame-slope}
Let $K/T/E/F$ be Galois extension of fields where $K/T$ and $E/F$ are 
totally wild but $T/E$ is a tame extension. Let $\chi$ be a character of $\mc W_K$. Then
\[\max_{\lambda\in G(E/F), \mu\in G(K/T)}{\rm sl}(\chi^{1-\lambda\mu})=\max_{\epsilon\in G(K/F)_1}{\rm sl}(\chi^{1-\epsilon}).\]
\end{lemma}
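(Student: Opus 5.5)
The plan is to identify both index sets inside a single group and then reduce to the valuation formula already used in the proof of Theorem \ref{thm slope adjoint wild}. We read $G(E/F)$ and $G(K/T)$ as subsets of the ambient Galois group $G:=G(K/F)$, taking $K/F$ Galois. Concretely, $\mu\in G(K/T)\subseteq G$, and each $\lambda\in G(E/F)$ is replaced by a chosen lift in $G$. In this setting, the proof of Lemma \ref{same-tame-jump} gives $G_1\subseteq G(K/T)$ when applied to the tower $K/T/F$, because $T/F$ has no wild part in the relevant range. This inclusion is the structural input. By \cite[Lemma 4.5]{kil19}, as in the proof of Theorem \ref{thm slope adjoint wild}, ${\rm sl}(\chi^{1-\epsilon})$ is determined by ${\rm sl}(\chi)$ and $v_K(\epsilon)$. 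It equals $\max\{0,{\rm sl}(\chi)-(v_K(\epsilon)-1)\}$ whenever the congruence condition of Lemma \ref{equality-check} holds, and it is nonincreasing in $v_K(\epsilon)$ in general. Both sides of the claimed identity are therefore maxima of the same decreasing function of $v_K(\cdot)$. It suffices to show that
\[\min\{v_K(\lambda\mu)\;:\;\lambda\mu\neq \id\}=\min\{v_K(\epsilon)\;:\;\epsilon\in G_1\setminus\{\id\}\},\]
with the trivial element contributing $0$ on both sides.

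For the inequality ``$\geq$'' on the left-hand index set, we first show that every nontrivial $\lambda\mu$ appearing there lies in $G_1$. Here $\mu\in G(K/T)$, and since $K/T$ is totally wild we have $G(K/T)=G(K/T)_1\subseteq G_1$. The element $\lambda$ maps into $G(E/F)=G(E/F)_1$ because $E/F$ is totally wild. We choose the lift of $\lambda$ inside the wild inertia. This is possible because $G_1$ surjects onto $G(E/F)_1$: upper numbering passes to quotients by \cite[Lemma 5]{ser79}, and $G(E/F)_1=G(E/F)^{\phi(1)}$. Consequently $\lambda\mu\in G_1$, and the left maximum is at most the right maximum.

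For the reverse inequality we start from an arbitrary $\epsilon\in G_1$. By Lemma \ref{min jump}, the minimum of $v_K$ over $G_1\setminus\{\id\}$ is attained on $G_{i_0}\setminus G_{i_0+1}$, where $i_0$ is the smallest nonzero lower jump. We must realize such an element as a product $\lambda\mu$. Take $\lambda$ to be the image of $\epsilon$ in $G(E/F)$, lifted back to $\epsilon$ itself, and take $\mu=\id$. This requires that the family $\{\lambda\mu\}$, with lifts ranging over $G_1$, exhausts $G_1$. That follows from $G_1 H_E/H_E\cong G(E/F)$, where $H_E=G(K/E)$, together with $G_1\cap H_E\supseteq G(K/T)$.

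The main obstacle is the last point. We expect to need $G_1\cap G(K/E)=G(K/T)$, meaning that the wild part of $G(K/E)$ is exactly $G(K/T)$. This should hold because $T/E$ is tame, so $G(K/E)_1=G(K/T)_1=G(K/T)$ by the same argument as in Lemma \ref{same-tame-jump}. With this equality, $G_1$ is precisely the set of products $\lambda\mu$ with wild lifts $\lambda$, and the two index sets coincide. The equality of maxima then follows immediately, without needing the explicit formula.
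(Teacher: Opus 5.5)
Your final argument is correct and is essentially the paper's. Both identify the index set $\{\lambda\mu\}$ with $G(K/F)_1$ using $G(K/F)_1\cap G(K/E)=G(K/E)_1=G(K/T)$, which follows from tameness of $T/E$, together with lifts of $\lambda\in G(E/F)$ taken inside $G(K/F)_1$ --- the paper makes this choice only implicitly, when it asserts that $\lambda_i\varsigma_j\mu$ has valuation $1$ for $\varsigma_j\neq\id$; you should, however, delete two things.

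\begin{itemize}
\item The opening ``structural input'' $G(K/F)_1\subseteq G(K/T)$, derived from the tower $K/T/F$. It is false whenever $E\neq F$, since $T/F$ contains the wild extension $E/F$ and in fact $|G(K/F)_1|=|G(E/F)|\,|G(K/T)|$.
\item The detour through the valuation formula. It is not justified without the congruence condition, and it is unnecessary once the index sets coincide.
\end{itemize}
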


\begin{proof}
Noting that $G(K/F)/G(K/E)\simeq G(E/F)$, we have $G(K/F)=\sqcup_{i} \lambda_iG(K/E)$ where $\lambda_i\in G(K/F)$ is a lift of the corresponding element in $G(E/F)$. 
Similarly, denoting the lifts of the elements 
of $G(T/E)$ in $G(K/E)$ by $\varsigma_j$, we have $G(K/E)=\sqcup_{j}\varsigma_jG(K/T)$. Thus \[G(K/F)=\sqcup_{i, j} \lambda_i\varsigma_jG(K/T).\] 
Since $T/E$ is a tame extension, each of $\varsigma_j$ has valuation $1$ and hence, the same is true for every element of the form $\lambda_i\varsigma_j\mu$ where $\mu
\in G(K/T)$. Thus elements in $G(K/F)$ with valuation strictly bigger than one are obtained by taking $\varsigma_j=\id$ in the above decomposition. Therefore 
\[\{\lambda\mu\mid \lambda\in G(E/F),\mu\in G(K/T)\} = \{\epsilon\in G(
K/F)\mid v_K(\epsilon)\geq 2\}=G(K/F)_1.\]
Hence the lemma follows. 
\end{proof}

\begin{lemma}\label{sub-quo-lower-jump}
Let $K/E/F$ be finite Galois extension of fields. Let $H=G(K/E)$ and $G=G(K/F)$. Let $i,i'$ and $i''$ be the smallest nonzero lower jumps of $G, H$ and $G/H$ respectively. Then we have
\[i'\geq i ~\text{and}~ i''\geq \phi_{K/E}(i).\]
\end{lemma}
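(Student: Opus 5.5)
We treat the two inequalities separately, using only Herbrand's theorem \cite[Lemma 5]{ser79}, $(G/H)_{\phi_{K/E}(x)} = G_xH/H$, and the identity $H_x = G_x\cap H$ for the lower filtration of a subgroup \cite[Chapter IV, Proposition 2]{ser79}.

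\emph{First inequality $i'\geq i$.} Since $i'$ is a nonzero lower jump of $H$, we have $H_{i'}\neq H_{i'+1}$. That is, $G_{i'}\cap H\neq G_{i'+1}\cap H$, so $G_{i'}\neq G_{i'+1}$. Hence $i'$ is a nonzero lower jump of $G$, and minimality of $i$ gives $i\leq i'$. This is the same argument as the first part of Lemma \ref{same-tame-jump}.

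\emph{Second inequality $i''\geq\phi_{K/E}(i)$.} The plan is to show that the quotient filtration is constant on the range $0<y<\phi_{K/E}(i)$ (with suitable endpoint conventions), so that no nonzero jump of $G/H$ occurs before $\phi_{K/E}(i)$. Let $0<x<i$; any such $x$ may be real, since the filtration is indexed by reals. By minimality of $i$, $G$ has no jumps in $(0,i)$. Hence for every $x$ with $0<x\leq i$ we have $G_x=G_1=G_i$, as $G_x$ is locally constant from the right at integers. Consequently $G_xH/H$ is independent of $x\in(0,i]$. By Herbrand, $(G/H)_y$ is then constant for $y\in(0,\phi_{K/E}(i)]$, because $\phi_{K/E}$ is an increasing bijection fixing $0$. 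A nonzero lower jump $y$ of $G/H$ means $(G/H)_y\neq(G/H)_{y+1}$ in the integer convention, or more precisely $(G/H)_y\neq(G/H)_{y+\epsilon}$ for all small $\epsilon>0$. Any such $y$ must therefore satisfy $y\geq\phi_{K/E}(i)$, which gives $i''\geq\phi_{K/E}(i)$.

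The main obstacle is the bookkeeping of conventions. The paper defines jumps by $G_l\neq G_{l+1}$ with integer indices, while the quotient filtration has real, generally non-integral breaks. I would work with the right-continuous real convention, where $y$ is a jump if $(G/H)_y\neq(G/H)_{y+\epsilon}$ for all $\epsilon>0$; in that convention the constancy argument above is clean. I would then note that $i$ is a jump of $G$ in the real sense as well, since $G_x=G_{i+1}$ for $x\in(i,i+1]$. A smaller point is the degenerate case where $G/H$ has no nonzero jump, for instance when $E/F$ is tame. There I would interpret $i''=\infty$ or omit the statement, and I would add a remark to that effect.
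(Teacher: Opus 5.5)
Your proof is correct and is essentially the paper's argument: for $i'\geq i$ both rest on $H_x=G_x\cap H$ (you show a nonzero jump of $H$ is a jump of $G$, the paper shows $H_x$ is constant on $(0,i]$, which is the same observation), and for $i''\geq\phi_{K/E}(i)$ both show $G_x$ is constant on $(0,i]$ and use Herbrand's theorem to conclude that $(G/H)_y$ is constant on $(0,\phi_{K/E}(i)]$. Two of your side remarks are slightly off but harmless: $G_x=G_{\lceil x\rceil}$ is constant on the intervals $(n-1,n]$, so it is locally constant from the left at integers, not the right; and $(G/H)_y$ is just the lower filtration of ${\rm Gal}(E/F)$, whose jumps are integers, so the real and integer jump conventions agree and there is no bookkeeping obstacle.
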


\begin{proof}
We prove that $i'\geq i$. We have $H_s=G_s\cap H$ for all $s>0$. Since $i$ is the smallest nonzero lower jump in $G$, we must have $G_s=G_t$ for $0<s, t\leq i$. So $H_s=H_t$ for $0<s, t\leq i$ and hence, the smallest nonzero lower jump in $H$ must be at least $i$.

Next we show that $i''\geq \phi_{K/E}(i)$. Recall that $(G/H)_{\phi_{K/E}(k)}=G_kH/H$ for any real number $k$ (cf. \cite[Lemma 5]{ser79}). 
Let $0<s, t\leq \phi_{K/E}(i)$. Writing $s=\phi_{K/E}(j), t=\phi_{K/E}(l)$ and using the fact that $\psi_{K/E}$ is increasing, we have $0<j, l\leq i$. We have
$$(G/H)_s=(G/H)_{\phi_{K/E}(j)}=G_jH/H=G_lH/H=(G/H)_{\phi_{K/E}(l)}=(G/H)_t.$$
Thus for $0<s, t\leq \phi_{K/E}(i)$, we have $(G/H)_s=(G/H)_t$, and hence 
$i''\geq \phi_{K/E}(i)$.
\end{proof}

\begin{lemma} {\rm (\cite{kil19})} \label{same-tame-slope-char}
Let $E/F$ be a totally ramified finite Galois extension. Let $\chi$ be a character of $\mc W_E$. Let $G=G(E/F)$ and $\lambda\in G_0\setminus G_1$. Then $${\rm sl}(\chi^{1-\lambda})={\rm sl}(\chi).$$
\end{lemma}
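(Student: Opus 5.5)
We must show that ${\rm sl}(\chi^{1-\lambda})={\rm sl}(\chi)$ when $E/F$ is totally ramified, $\chi$ is a character of $\mc W_E$, and $\lambda\in G_0\setminus G_1$.

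The plan is to work on the multiplicative side via local class field theory, as in the proof of Theorem \ref{thm slope adjoint wild}. We view $\chi$ as a character of $E^\times$. Then $\chi^{1-\lambda}(x)=\chi(x)\chi(\lambda(x))^{-1}=\chi\bigl(x/\lambda(x)\bigr)$, up to inverting, which does not affect the slope. Recall that the slope of a character of $E^\times$ is the smallest integer $n\ge 0$ such that the character is trivial on $U_E^{n+1}$. The slope is $0$ if the character is tamely ramified. If $\chi$ is tame, both sides are $0$ because $x\mapsto x/\lambda(x)$ preserves units. So we may assume $\varsigma={\rm sl}(\chi)\ge 1$. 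We then have to show two things: $\chi^{1-\lambda}$ is trivial on $U_E^{\varsigma+1}$, and it is nontrivial on $U_E^{\varsigma}$.

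\textbf{Step 1 (upper bound).} Write $f=1-\lambda\colon x\mapsto x/\lambda(x)$. Since $\lambda$ preserves each unit group $U_E^{n}$, the map $f$ sends $U_E^{n}$ into $U_E^{n}$. Hence $\chi^{1-\lambda}$ is trivial on $U_E^{\varsigma+1}$, and ${\rm sl}(\chi^{1-\lambda})\le\varsigma$.

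\textbf{Step 2 (lower bound).} We compute $f$ on the graded pieces $U_E^{n}/U_E^{n+1}$ for $n\ge1$. Choose a uniformizer $\varpi_E$. Because $\lambda\in G_0\setminus G_1$, we have $\lambda(\varpi_E)=\zeta\varpi_E\cdot u$ with $u\in U_E^1$, where $\zeta$ is a root of unity of order prime to $p$ with $\zeta\neq 1$ in the residue field. Here $\zeta$ is the image of $\lambda$ under the injection $G_0/G_1\hookrightarrow k_E^\times$ given by $\lambda\mapsto \lambda(\varpi_E)/\varpi_E \bmod \mathfrak p_E$. This injectivity is exactly what distinguishes $G_0$ from $G_1$.

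For $x=1+a\varpi_E^{n}$ with $a$ in the ring of integers, $E/F$ totally ramified gives $\lambda(a)\equiv a \pmod{\mathfrak p_E}$. Therefore
$$\lambda(x)\equiv 1+a\zeta^{n}\varpi_E^{n} \pmod{\mathfrak p_E^{n+1}}.$$
So on $U_E^n/U_E^{n+1}\cong k_E$ the map $f$ induces multiplication by $1-\zeta^{n}$.

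\textbf{Step 3.} Take $n=\varsigma$. Since $\chi$ is nontrivial on $U_E^{\varsigma}/U_E^{\varsigma+1}$, we get nontriviality of $\chi^{1-\lambda}$ at level $\varsigma$ as soon as $1-\zeta^{\varsigma}\ne0$. In that case $f$ is an automorphism of the graded piece, and $\chi\circ f$ is nontrivial there.

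The main obstacle is ensuring $\zeta^{\varsigma}\neq1$, that is, ${\rm ord}(\zeta)\nmid\varsigma$. This is where the Carayol hypothesis of the ambient setting enters, as in \cite{kil19}. Since $\rho$ is Carayol, its Swan conductor is coprime to its dimension, so coprime to the tame ramification degree. This forces $\varsigma$ to be prime to the order of the tame quotient $G_0/G_1$.

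Concretely, I would argue this as follows. Suppose ${\rm ord}(\zeta)\mid\varsigma$. Then $\chi$ restricted to $U_E^{\varsigma}$ factors through a norm from the tame subextension. This would make the Swan conductor of the induced representation share a factor with the ramification index, contradicting Carayol. The same mechanism is behind Lemma \ref{equality-check}. I would first attempt the direct Swan-conductor computation, using ${\rm Sw}({\rm Ind}\,\chi)={\rm Sw}(\chi)+w$ and the fact that $w$ is divisible by the tame degree. If $\varsigma$ coincides with a jump, I would fall back to treating that case separately by the same graded computation.
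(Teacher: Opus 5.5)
Your Steps 1 and 2 are correct, and they are the right computation. The map $f\colon x\mapsto x/\lambda(x)$ preserves every $U_E^n$. On $U_E^n/U_E^{n+1}\cong k_E$ it acts as multiplication by $1-\zeta^n$, where $\zeta=\lambda(\varpi_E)/\varpi_E \bmod \mathfrak p_E$. Hence ${\rm sl}(\chi^{1-\lambda})\le {\rm sl}(\chi)$, with equality exactly when ${\rm sl}(\chi)=0$ or $\zeta^{{\rm sl}(\chi)}\neq 1$. Indeed, if $\zeta^{\varsigma}=1$ then $f(U_E^{\varsigma})\subseteq U_E^{\varsigma+1}$ and the slope drops.

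\textbf{The gap is Step 3.} The lemma contains no hypothesis that could force $\zeta^{\varsigma}\neq 1$, and in fact the statement is false as written. For a counterexample, take $E=F(\sqrt{\varpi_F})$ (using $p\neq 2$) and let $\lambda$ be the nontrivial element of $G=G_0$, so $G_1=1$ and $\zeta=-1$. Let $\chi_F$ be a character of $F^\times$ of slope $s\geq 1$, and put $\chi=\chi_F\circ N_{E/F}$, i.e.\ $\chi=\chi_F|_{\mathcal W_E}$. Then $\chi^{\lambda}=\chi$, so $\chi^{1-\lambda}=1$ has slope $0$. But ${\rm sl}(\chi)=2s>0$ by Lemma \ref{slope-restriction-induction}(1). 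This is exactly your degenerate case $\zeta^{2s}=1$.

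So there is no ``Carayol hypothesis of the ambient setting'' you can appeal to here. Your sketch of how it would enter ($\chi|_{U_E^{\varsigma}}$ ``factoring through a norm'') is also not an argument. The paper's own proof does not close this gap either: it only cites that $1-\lambda$ maps $U_E^l$ into $U_E^l$, which is your Step 1, so it too yields only the inequality.

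What your argument actually proves is the inequality, plus equality under an added hypothesis. For example, assume the order of $\lambda G_1$ in $G_0/G_1$ does not divide ${\rm sl}(\chi)$; then Steps 1--3 form a complete proof. If you want a Carayol-type hypothesis, the clean choice is to assume ${\rm Ind}_{E/F}\chi$ is Carayol. Then your Swan-conductor idea works, with no norm argument needed:
\begin{itemize}
\item First, ${\rm Sw}({\rm Ind}_{E/F}\chi)={\rm sl}(\chi)+w_{E/F}$, where $w_{E/F}=\sum_{j\geq 1} j\,|G_{j+1}|\,(|G_j/G_{j+1}|-1)$.
\item Let $e_t=|G_0/G_1|$ and let $\xi$ generate $\theta_0(G_0)$. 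The formula $\theta_j(sts^{-1})=\theta_0(s)^j\theta_j(t)$ of \cite[Chapter IV, \S 2]{ser79} makes the image of $G_j/G_{j+1}$ in $k_E$ a vector space over $\mathbb F_p(\xi^j)$.
\item Since $\xi^j$ has order $e_t/\gcd(e_t,j)$, this order divides $|G_j/G_{j+1}|-1$. So each term $j\,(|G_j/G_{j+1}|-1)$ is divisible by $e_t$, hence $e_t\mid w_{E/F}$.
\item The Carayol condition then gives $\gcd({\rm sl}(\chi),e_t)=1$, so $\zeta^{{\rm sl}(\chi)}\neq 1$ for every $\lambda\in G_0\setminus G_1$.
\end{itemize}
Whichever hypothesis you choose must be written into the statement and then checked where the lemma is used, namely in Proposition \ref{tame-slope-invariant}.
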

\begin{proof}
Let $E^{\lambda}$ denote the fixed field of the cyclic subgroup $\langle \lambda \rangle \subset G$. Then $E/E^{\lambda}$ is a cyclic tamely ramified extension. We note that $1 - \lambda$ induces a map $U_E^l \rightarrow U_E^l$ for all $l\geq 1$ (\cite[Lemma 4.2 (ii)]{kil19}). Here $U_E^l$ denotes the unit subgroup of $E^*$. By class field theory, thinking of $\chi$ as a character on $E^*$, the result follows.
\end{proof}
\begin{remark}\label{rmk non-galois}
Since the ramification subgroups are defined for any finite separable extension (not necessarily Galois), following \cite[\S 1.1, p. 10]{kil19}, observe that, Lemma \ref{same-tame-slope-char} holds for any totally ramified finite extension.
\end{remark}

As mentioned in the introduction, any Carayol representation of $\mc W_F$ can be constructed from a character $\chi$ of $\mc W_K$ for some extension $K/F$. Let us briefly recall the construction that will be helpful in proving Theorems \ref{int thm slope adjoint tame} and \ref{int thm slope carayol}. Let $\rho$ be a Carayol representation of $\mathcal{W}_F$ of dimension $mp^r$ with $(p, m) =1$. There exists a totally and tamely ramified extension $F'/F$ of degree $m$ and a Carayol representation $\rho'$ of $\mathcal{W}_{F'}$ of degree $p^r$ such that $\rho={\rm Ind}_{F'/F}\rho'$. The representation $\rho'$ of $\mc W_{F'}$ is either induced from a Carayol representation of a subgroup of $\mc W_{F'}$ or primitive (i.e., not induced from a proper subgroup). If the representation $\rho'$ is further induced, there exists a totally wild extension $E/F'$ and a Carayol representation $\tau$ of $\mc W_{E}$ such that $\rho'=\Ind_{E/F'}\,\tau$. In this case $\tau$ is primitive. If $\rho'$ is primitive, we take $E=F'$. The representation $\tau$ being primitive implies that, there exists a tamely ramified extension $T/E$ such that $\tau\mid_{\mc W_T}$ is irreducible. 
	Finally, there exists a totally wildly ramified finite Galois extension $K$ of $T$ and a character $\chi$ of $\mc W_K$ such that $\tau\mid_{\mathcal W_T}={\rm Ind}_{K/T}\chi$. The following diagram might be useful: 
	
	\[
	\begin{tikzcd}
		\mathcal W_F \arrow{r} \arrow[swap]{d}{{\rm totally~ tame}} & \rho: {\rm dim} \,\rho=mp^r \\
		\mathcal W_{F'}  \arrow{r} \arrow[swap]{d}{{\rm totally~wild}} & \rho': {\rm dim}\, \rho'=p^r  \arrow{u}{{\rm induction}} \\
		\mathcal W_{E}  \arrow{r} \arrow[swap]{d}{{\rm tame}} & \tau: {\rm dim}\, \tau=p^s  \arrow{u}{{\rm induction}} \arrow{d}{{\rm restriction}} \\
		\mathcal W_T  \arrow{r} \arrow[swap]{d}{{\rm totally~ wild}} & \tau\mid_{\mathcal W_T}: {\rm dim}\, \tau\mid_{\mathcal W_T}=p^s \\
		\mathcal W_K \arrow{r} & \chi: {\rm dim}\, \chi=1 \arrow{u}{{\rm induction}}
	\end{tikzcd}
      \]

\subsection{Proof of Theorem \ref{int thm slope adjoint tame}}
The following proposition lies in the heart of the proof of Theorem \ref{int thm slope adjoint tame}.

\begin{proposition}\label{tame-slope-invariant}
Let $E/F$ be a totally ramified finite Galois extension. Let $\sigma$ be a 
Carayol representation of $\mc W_E$. Let $G={\rm Gal}(E/F)$ and $\lambda\in G_0\setminus G_1$. Then 
$${\rm sl}(\sigma\otimes (\sigma^{\vee})^\lambda)={\rm sl}(\sigma).$$
	\end{proposition}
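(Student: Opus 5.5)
We reduce to the situation of a character and then apply Lemma \ref{same-tame-slope-char} (in the form of Remark \ref{rmk non-galois}), combined with Mackey theory. The upper bound is formal. The slope of a tensor product is at most the maximum of the slopes (\cite[Proposition 3.4]{kil19}), and slope is invariant under Galois conjugation and duality (Section \ref{sec slope}). Together these give
\[{\rm sl}(\sigma\otimes(\sigma^\vee)^\lambda)\le \max\{{\rm sl}(\sigma),{\rm sl}((\sigma^\vee)^\lambda)\}={\rm sl}(\sigma).\]
So the whole content is the reverse inequality.

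For the lower bound, write $\sigma$ via the construction recalled before this subsection. Since $\sigma$ is Carayol, it is tamely irreducible. After a tame base change $T/E$ (Lemma \ref{inducing character} and Lemma \ref{p-power-reps}), it becomes $\sigma|_{\mc W_T}\cong{\rm Ind}_{K/T}\chi$ for a totally wild extension $K/T$ and a character $\chi$ of $\mc W_K$. Restricting to a tame extension multiplies every slope by the same ramification index (Lemma \ref{slope-restriction-induction}(1)). Hence it suffices to prove the equality after restriction to $\mc W_T$, or further to $\mc W_K$, provided we choose the tame extensions compatibly with $\lambda$. Concretely, we enlarge $T$ so that $K/F$ is Galois and let $\tilde\lambda\in G(K/F)_0\setminus G(K/F)_1$ be a lift of $\lambda$. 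Such a lift exists because the image of $G(K/F)_0\setminus G(K/F)_1$ surjects onto the tame part by \cite[Lemma 5]{ser79} for the quotient $G(E/F)$ (as in the proof of Lemma \ref{same-tame-jump}).

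By Mackey, the restriction of $\sigma\otimes(\sigma^\vee)^\lambda$ to $\mc W_K$ contains the summand $\chi\otimes(\chi^{-1})^{\tilde\lambda}=\chi^{1-\tilde\lambda}$. The order of $\tilde\lambda$ is prime to $p$ modulo wild elements, so we may choose $\tilde\lambda$ of order prime to $p$. Lemma \ref{same-tame-slope-char}, in the non-Galois form of Remark \ref{rmk non-galois}, then gives ${\rm sl}(\chi^{1-\tilde\lambda})={\rm sl}(\chi)$. It follows that
\[{\rm sl}\bigl((\sigma\otimes(\sigma^\vee)^\lambda)|_{\mc W_K}\bigr)\ge {\rm sl}(\chi).\]
We now transport this back down. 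The tame steps use Lemma \ref{slope-restriction-induction}. For the wild step $K/T$ we use Proposition \ref{slope-induction}: the representation ${\rm Ind}_{K/T}(\chi^{1-\tilde\lambda})$ appears as a summand of the restriction to $\mc W_T$, again by Mackey, since $\sigma|_{\mc W_T}$ is induced from $\chi$. Its slope is $\phi_{K/T}({\rm sl}(\chi))={\rm sl}({\rm Ind}_{K/T}\chi)={\rm sl}(\sigma|_{\mc W_T})$. Hence ${\rm sl}\bigl((\sigma\otimes(\sigma^\vee)^\lambda)|_{\mc W_T}\bigr)\ge {\rm sl}(\sigma|_{\mc W_T})$, and dividing by the tame index $e(T/E)$ gives the claim.

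The main obstacle is the Mackey step. One must check that $\chi\otimes(\chi^{-1})^{\tilde\lambda}$ genuinely occurs, that is, that the double coset of $\tilde\lambda$ contributes, and that the relevant summand is exactly ${\rm Ind}_{K/T}(\chi^{1-\tilde\lambda})$ with $\tilde\lambda$ normalizing $\mc W_K$. Equally delicate is ensuring that the lift $\tilde\lambda$ can be taken in $G_0\setminus G_1$ with $v_K(\tilde\lambda)=1$, so that Lemma \ref{same-tame-slope-char} applies. If normalization fails, I would instead restrict everything to the Galois closure and apply Lemma \ref{same-tame-slope} to identify the relevant conjugates.
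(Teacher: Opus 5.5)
Your upper bound is correct, and getting the lower bound from a single Mackey summand of full slope is leaner than the paper's approach, which evaluates every summand. The problem is the step you treat as routine. The equality ${\rm sl}(\chi^{1-\tilde\lambda})={\rm sl}(\chi)$ from Lemma \ref{same-tame-slope-char} is false in general. Let $\varsigma={\rm Sw}(\chi)$, and let $\zeta\in k_K^\times$ be the residue of $\tilde\lambda(\pi_K)/\pi_K$. Since $\tilde\lambda$ lies in inertia, it acts on $U_K^{\varsigma}/U_K^{\varsigma+1}\cong k_K$ by multiplication by $\zeta^{\varsigma}$. So $x\mapsto\tilde\lambda(x)/x$ induces multiplication by $\zeta^{\varsigma}-1$ on this quotient. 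Hence $\chi^{1-\tilde\lambda}$ keeps Swan conductor $\varsigma$ if and only if $\zeta^{\varsigma}\neq 1$. When the order of $\zeta$ divides $\varsigma$, the character $\chi^{1-\tilde\lambda}$ is trivial on $U_K^{\varsigma}$ and the slope drops. The paper's argument for that lemma (that $1-\lambda$ preserves each $U_K^l$) only yields ``$\le$''. Choosing $\tilde\lambda$ of prime-to-$p$ order does not help.

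In fact the proposition is false as stated, so fixing the Mackey bookkeeping cannot rescue the proof. Let $E/F$ be totally tamely ramified Galois of degree $e>1$ (e.g.\ $F=\mathbb{Q}_p$, $E=\mathbb{Q}_p(\sqrt{p})$). Let $\chi_F$ be a character of $\mc W_F$ with ${\rm sl}(\chi_F)=s>0$, and put $\sigma=\chi_F|_{\mc W_E}$. By Lemma \ref{slope-restriction-induction}, ${\rm sl}(\sigma)=es>0$, so $\sigma$ is Carayol (Remark \ref{positive slope}), and every $\lambda\neq 1$ lies in $G_0\setminus G_1$. But $\sigma$ extends to $\mc W_F$, so $(\sigma^\vee)^\lambda=\sigma^\vee$. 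Thus $\sigma\otimes(\sigma^\vee)^\lambda$ is trivial and has slope $0\neq es$. In your notation $T=K=E$, $\chi=\sigma$, and $\varsigma=es$ is divisible by the order of $\zeta$.

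The paper's own proof applies the same lemma to every summand, so it has the same hole. A correct statement needs an extra arithmetic hypothesis, something like $\zeta^{{\rm Sw}(\chi)}\neq 1$ for the relevant lifts. In the simplest instance of the application, Theorem \ref{int thm slope adjoint tame} with $r=0$, this comes from $\gcd({\rm Sw}(\rho),m)=1$ for the tamely induced $\rho$, not from the Carayol property of $\sigma$ alone.

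Two secondary points would also need fixing:
\begin{enumerate}
\item If $\dim\sigma=mp^r$ with $m>1$, then $\sigma|_{\mc W_T}$ cannot equal ${\rm Ind}_{K/T}\chi$ with $K/T$ totally wild, because its dimension is not a power of $p$.
\item Enlarging $T$ tamely does not in general make $K$ stable under $\tilde\lambda$. You need that stability for $\chi\otimes(\chi^{-1})^{\tilde\lambda}$ to be a character of $\mc W_K$.
\end{enumerate}
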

	
	\begin{proof}[Proof of Proposition \ref{tame-slope-invariant}]
Let $\dim \,\sigma=mp^r$ with $(p, m)=1$. By \cite[Lemma 8.4, p. 358]{bus-hen03}
and \cite[Lemma 7.5, p. 20]{kil19}, there exist a tame Galois extension $K/E$ of degree $m$ such that $\sigma={\rm Ind}_{K/E}\tau$ where $\tau$ is a wildly ramified representation of dimension $p^r$. By Mackey decomposition we have
\begin{eqnarray}\label{general-val-one}
	\sigma\otimes(\sigma^{\vee})^{\lambda} 
	& = & {\rm Ind}_{K/E}\tau\otimes{\rm Ind}_{K/E}(\tau^{\vee})^{\lambda}\nonumber\\
	& = &\oplus_{\mu\in G(K/E)}{\rm Ind}_{K/E}(\tau\otimes (\tau^{\vee})^{\lambda\mu}).\\\nonumber
\end{eqnarray}
Since $K/E$ is tame, by Lemma \ref{slope-restriction-induction} we have
\begin{eqnarray}\label{tame-formula}
	{\rm sl}({\rm Ind}_{K/E}(\tau\otimes (\tau^{\vee})^{\lambda\mu}))=1/m\cdot {\rm sl}(\tau\otimes (\tau^{\vee})^{\lambda\mu}).\\\nonumber
\end{eqnarray}
Note that $\tau$ is either primitive or $\tau={\rm Ind}_{L'/K}\tau'$ where $L'/K$ is a finite Galois extension and $\tau'$ is primitive (cf. Lemma \ref{p-power-reps} and Remark \ref{rmk non-galois}). 

Suppose $\tau$ is primitive. There exists a tame 
extension $T/K$ such that $\tau\mid_{\mc W_T}$ is irreducible and $\tau\mid_{\mc W_T}={\rm Ind}_{L/T}\chi$, where $L/T$ is totally wild Galois extension and $\chi$ is a character of $\mc W_L$ (cf. Lemma \ref{inducing character}). Thus
\begin{eqnarray}\label{reduce-to-slope-tau}
	{\rm sl}(\tau\otimes (\tau^{\vee})^{\lambda\mu})&=&1/e(T/K)\cdot{\rm sl}(\tau\mid_{\mc W_T}\otimes (\tau\mid_{\mc W_T}^{\vee})^{\lambda\mu}) \nonumber\\
	&=& 1/e(T/K) \cdot {\rm sl}({\rm Ind}_{L/T}\chi\otimes {\rm Ind}_{L/T}\chi^{-\lambda\mu}) \nonumber\\
	&=& 1/e(T/K) \cdot {\rm sl}(\oplus_{\epsilon\in G(L/T)}{\rm Ind}_{L/T}\chi^{1-\lambda\mu\epsilon}) \nonumber\\
	&=& 1/e(T/K) \cdot \max_{\epsilon\in G(L/T)}\phi_{L/T}({\rm sl}(\chi^{1-\lambda\mu\epsilon})) \nonumber\\
	&=& 1/e(T/K) \cdot \phi_{L/T}({\rm sl}(\chi)) \nonumber\\
	&=& 1/e(T/K) \cdot {\rm sl}({\rm Ind}_{L/T}\chi) \nonumber\\
	&=& 1/e(T/K) \cdot {\rm sl}(\tau\mid_{\mc W_T}) \nonumber\\
	&=& {\rm sl}(\tau).\\ \nonumber  
\end{eqnarray}
Since $\lambda\in G_0\setminus G_1$, its lift in $G(L/F)$ (along the line of Remark \ref{rmk non-galois}) has valuation $1$. 
Therefore the lift of $\lambda\mu\epsilon$ in $G(L/F)$ has valuation $1$, i.e., $\lambda\mu\epsilon\in G(L/F)_0\setminus G(L/F)_1$. So by Lemma \ref{same-tame-slope-char}, the fifth equality follows.

Thus by \eqref{tame-formula} and \eqref{reduce-to-slope-tau}, each of the direct summand in $\eqref{general-val-one}$

has slope $1/m\cdot{\rm sl}(\tau)$, and therefore $${\rm sl}(\sigma\otimes(\sigma^{\vee})^{\lambda})=1/m\cdot{\rm sl}(\tau)={\rm sl}({\rm Ind}_{K/E}\tau)={\rm sl}(\sigma).$$

Now suppose $\tau$ is not primitive and so, $\tau={\rm Ind}_{L'/K}\tau'$ for some $L'$. Then 
\begin{eqnarray*}
	{\rm sl}(\tau\otimes (\tau^{\vee})^{\lambda\mu})&=& {\rm sl}({\rm Ind}_{L'/K}\tau'\otimes {\rm Ind}_{L'/K}(\tau'^{\vee})^{\lambda\mu})\\
	&=& {\rm sl}(\oplus_{\epsilon\in G(L'/K)}{\rm Ind}_{L'/K}(\tau'\otimes (\tau'^{\vee})^{\lambda\mu\epsilon}))\\
	&=& \max_{\epsilon\in G(L'/K)}\phi_{L'/K}({\rm sl}(\tau'\otimes (\tau'^{\vee})^{\lambda\mu\epsilon}))\\
	&=& \phi_{L'/K}({\rm sl}(\tau'))\\
	&=& {\rm sl}({\rm Ind}_{L'/K}(\tau'))\\
	&=& {\rm sl}(\tau).\\
\end{eqnarray*}
The fourth equality follows from the previous case because $\tau'$ is primitive.
Proceeding as in the previous case we have ${\rm sl}(\sigma\otimes(\sigma^{\vee})^{\lambda})={\rm sl}(\sigma$).
\end{proof}

\begin{proof}[Proof of Theorem \ref{int thm slope adjoint tame}]
Suppose $m>1$. Then $\rho={\rm Ind}_{F'/F}\rho'$ for some totally tamely ramified extension $F'/F$ of degree $m$ and wildly ramified representation $\rho'$ of $\mc W_{F'}$ of dimension $p^r$ (see \cite[Lemma 8.4, p. 358]{bus-hen03}).
By the tame reduction as in \cite[Lemma 7.5]{kil19}, without loss of generality we can assume that $F'/F$ is Galois.

Now
\begin{eqnarray}\label{gen-tame-prim}
{\rm sl}(\rho\otimes \rho^{\vee})
&=& {\rm sl}({\rm Ind}_{F'/F}\rho'\otimes {\rm Ind}_{F'/F}\rho'^{\vee}) \nonumber\\
&=& {\rm sl}(\oplus_{\lambda\in G(F'/F)}{\rm Ind}_{F'/F}(\rho'\otimes                (\rho'^{\vee})^{\lambda})) \nonumber\\
&=& \max_{\lambda\in G(F'/F)}{\rm sl}({\rm Ind}_{F'/F}(\rho'\otimes (\rho'^{\vee})^{\lambda})) \nonumber\\
&=& \max_{\lambda\in G(F'/F)}
1/e(F'/F)\cdot {\rm sl}(\rho'\otimes (\rho'^{\vee})^{\lambda}).\\ \nonumber
\end{eqnarray}
Since $F'/F$ is tame, for ${\rm Id}\neq \lambda\in G(F'/F)$, by Proposition \ref{tame-slope-invariant} we have 
\begin{eqnarray}\label{gen-tame-non-prim-non-id}
{\rm sl}(\rho'\otimes (\rho'^{\vee})^{\lambda}) &=& {\rm sl}(\rho').
\end{eqnarray}
On the other hand, for $\lambda={\rm Id}\in G(F'/F)$, we have (cf. \cite[Proposition 3.4]{kil19})
\begin{eqnarray}\label{dual-tensor-slope}
{\rm sl}(\rho'\otimes \rho'^{\vee}) \leq {\rm sl}(\rho').\\\nonumber
\end{eqnarray}
Finally, by \eqref{gen-tame-prim}, \eqref{gen-tame-non-prim-non-id} and \eqref{dual-tensor-slope}, we have 
\[{\rm sl}(\rho\otimes \rho^{\vee})=1/e(F'/F)\cdot{\rm sl}(\rho')={\rm sl}(\rho).\] This completes the proof of the theorem.
\end{proof}

\subsection{Proof of Theorem \ref{int thm slope carayol}}

Let us restate our second main theorem for convenience.
\begin{theorem}\label{thm slope carayol}
Let $\rho$ be a Carayol representation of $\mc W_F$ of dimension $p^r$. Then $${\rm sl}(\rho\otimes \rho^{\vee})={\rm sl}(\rho)-\dfrac{\phi_{K/F}(i_0)}{\dim \, \rho}$$ for the finite Galois extension $K/F$, where $i_0$ is the smallest nonzero lower jump in $G(K/F)$. 
\end{theorem}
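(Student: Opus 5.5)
We follow the construction recalled before the diagram, now with $m=1$, so $F'=F$. By Lemma \ref{p-power-reps}, either $\rho$ is primitive (take $E=F$) or $\rho={\rm Ind}_{E/F}\tau$ with $E/F$ totally wild and $\tau$ primitive. By Lemma \ref{inducing character} there is a tame $T/E$ with $\tau|_{\mc W_T}={\rm Ind}_{L/T}\chi$, where $L/T$ is totally wild Galois and $\chi$ is a character of $\mc W_L$. As in the proof of Theorem \ref{int thm slope adjoint tame}, we may use tame reduction (\cite[Lemma 7.5]{kil19}) to enlarge $L$ so that the relevant extensions are Galois over $F$; the $K$ of the statement is this Galois closure $L/F$. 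The plan is then to write the slope of $\rho\otimes\rho^\vee$ as $\phi$ of a maximum over character slopes $\chi^{1-\epsilon}$, and evaluate that maximum as in Theorem \ref{thm slope adjoint wild}.

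\textbf{Step 1 (reduction to characters).} Apply Mackey to $\rho\otimes\rho^\vee={\rm Ind}_{E/F}\tau\otimes{\rm Ind}_{E/F}\tau^\vee$ to get $\bigoplus_\lambda{\rm Ind}_{E/F}(\tau\otimes(\tau^\vee)^\lambda)$. Use \cite[Proposition 3.4]{kil19} and Proposition \ref{slope-induction} to obtain $\max_\lambda\phi_{E/F}({\rm sl}(\tau\otimes(\tau^\vee)^\lambda))$. Then restrict to $\mc W_T$, which multiplies slopes by $e(T/E)$ by Lemma \ref{slope-restriction-induction}. Apply Mackey again over $L/T$, as in \eqref{reduce-to-slope-tau}. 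The result is
\[{\rm sl}(\rho\otimes\rho^\vee)=\phi_{E/F}\Big(\tfrac{1}{e(T/E)}\,\phi_{L/T}\big(\max_{\lambda,\mu}{\rm sl}(\chi^{1-\lambda\mu})\big)\Big).\]
Here $\lambda$ runs over $G(E/F)$ and $\mu$ over $G(L/T)$, with lifts taken as in Remark \ref{rmk non-galois}. Lemma \ref{same-tame-slope} identifies this index set with $G(L/F)_1$. The cosets with a tame component are excluded there, and by Lemma \ref{same-tame-slope-char} they would give slope exactly ${\rm sl}(\chi)$, not more. I must check this point carefully: it is the one place where the identity term could spoil the strict decrease. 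The intended reading is that only $\epsilon$ with $v_L(\epsilon)\ge 2$ occur.

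\textbf{Step 2 (evaluating the maximum).} For $\epsilon\in G(L/F)_1\setminus\{1\}$, use \cite[Lemma 4.5]{kil19} together with a Carayol congruence analogous to Lemma \ref{equality-check}. Both $\rho$ and $\chi$ are Carayol, so ${\rm Sw}(\chi)\not\equiv v_L(\epsilon)-1 \pmod p$. This gives ${\rm sl}(\chi^{1-\epsilon})={\rm sl}(\chi)-(v_L(\epsilon)-1)$ whenever this is positive. By Lemma \ref{min jump} and Lemma \ref{same-tame-jump}, the maximum is ${\rm sl}(\chi)-i_0$, where $i_0$ is the smallest nonzero lower jump of $G(K/F)$.

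\textbf{Step 3 (unwinding the Herbrand functions).} We need $\Phi({\rm sl}(\chi)-i_0)=\Phi({\rm sl}(\chi))-\phi_{K/F}(i_0)/p^r$, where $\Phi=\phi_{E/F}\circ\frac1{e}\circ\phi_{L/T}$ and $\Phi({\rm sl}(\chi))={\rm sl}(\rho)$. This holds if, on the interval $[{\rm sl}(\chi)-i_0,{\rm sl}(\chi)]$, every wild Herbrand function in the tower is in its final linear regime (Proposition \ref{psi-for-unique-jump}). In that regime $\Phi$ is affine with slope $1/(p^r e)$, with the tame factor entering only through rescaling. I expect this step to be the main obstacle. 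The tool would be the analogue of Lemma \ref{large slope} and inequality \eqref{master-ineq} from Theorem \ref{thm slope adjoint any wild}: ${\rm sl}(\chi)-i_0$ lies beyond the largest jump at every stage.

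Under this scheme the subtracted quantity is naturally $i_0/(p^r e)$. Matching it with $\phi_{K/F}(i_0)/p^r$ requires $\phi_{K/F}(i_0)=i_0/e$. This holds because $K/F$ has tame part of degree $e$, and below the first wild jump the Herbrand function is $x/e$. The total $p$-power is $p^r$, which equals $\dim\rho$.
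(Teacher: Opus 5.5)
Your outline is the paper's own argument. It runs Mackey over $E/F$, tame restriction to $T$, Mackey over $L/T$, then Lemma~\ref{same-tame-slope} to reach $\phi_{K/F}({\rm sl}(\chi)-i_0)$, and finally unwinds the Herbrand functions stage by stage as in Theorem~\ref{thm slope adjoint any wild}.

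The step you flag as the main obstacle is a genuine gap, and neither your proposed tool nor the paper closes it.
\begin{itemize}
\item $\phi_{E/F}$ has the affine form $(x+w_{E/F})/p^r$ only for $x\ge l$, the largest \emph{lower} jump. Proposition~\ref{psi-for-unique-jump} makes $\psi_{E/F}$ affine past the largest upper jump $u$, hence $\phi_{E/F}$ affine past $\psi_{E/F}(u)=l$.
\item Lemma~\ref{large slope} together with ${\rm sl}(\chi)\ge l$ only gives ${\rm sl}(\chi)-i_0\ge u$. The paper gets past this via Corollary~\ref{cyclic phi}, which places the breakpoints of $\phi_{E/F}$ at the upper jumps $j_k$ instead of at $l_k=\psi_{E/F}(j_k)$.
\item What is actually needed is ${\rm sl}(\chi)\ge l+i_0$. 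Irreducibility and the Carayol condition only give ${\rm Sw}(\chi)\ge l$ and ${\rm Sw}(\chi)\not\equiv l\pmod p$.
\item Your Step 1 has a linked defect. The summand with $\epsilon=1$ comes from the trivial character, and ${\rm Ind}\,\mathbf{1}$ has slope equal to the largest upper jump, not $\phi(0)=0$. In general ${\rm sl}({\rm Ind}_{E/F}\psi)=\max\{u,\phi_{E/F}({\rm sl}\,\psi)\}$. So Proposition~\ref{slope-induction}, in the form you use it, is valid only when $\phi_{E/F}({\rm sl}\,\psi)\ge u$.
\end{itemize}

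The gap cannot be filled, because the formula fails in exactly this range. Take the following data:
\begin{itemize}
\item $E/F$ totally ramified cyclic of degree $p^2$, with lower jumps $i_0=l_1\ge 2$ and $l_2=i_0+pi_1$, and upper jumps $i_0<u_2=i_0+i_1$.
\item For instance $p=3$ with upper jumps $2,7$ and lower jumps $2,17$.
\item A character $\chi$ of $\mathcal{W}_E$ with ${\rm Sw}(\chi)=l_2+1$.
\end{itemize}
Both $l_2+1-l_1=pi_1+1$ and $l_2+1-l_2=1$ are prime to $p$. So \cite[Lemma 4.5]{kil19} gives ${\rm Sw}(\chi^{\lambda-1})>0$ for all $\lambda\neq 1$, and $\rho={\rm Ind}_{E/F}\chi$ is irreducible. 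Since $w_{E/F}=p^2u_2-l_2$, we get ${\rm Sw}(\rho)=l_2+1+w_{E/F}=p^2u_2+1$. Hence $\rho$ is Carayol with ${\rm sl}(\rho)=u_2+p^{-2}$. But $\rho\otimes\rho^\vee$ contains ${\rm Ind}_{E/F}\mathbf{1}$, the regular representation of ${\rm Gal}(E/F)$, whose slope is $u_2$. Therefore
\[{\rm sl}(\rho\otimes\rho^\vee)\ \ge\ u_2\ >\ u_2-\frac{i_0-1}{p^2}\ =\ {\rm sl}(\rho)-\frac{i_0}{p^2}.\]
Here $K=E$ and $\phi_{K/F}(i_0)=i_0$. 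In the numerical example the true value is $7$, while the formula predicts $62/9$. This also contradicts Theorem~\ref{thm slope adjoint wild} with $r=2$.

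So your scheme, like the paper's, proves the formula only under an extra hypothesis: ${\rm sl}(\chi)-i_0$ must lie beyond the last lower jump at every stage. This is automatic for cyclic $E/F$ when $i_0=1$. Without it, already for $\rho={\rm Ind}_{E/F}\chi$ with $E/F$ cyclic, redoing the Mackey computation with $\max\{u,\phi(\cdot)\}$ gives
\[{\rm sl}(\rho\otimes\rho^\vee)=\max\{u_r,\ {\rm sl}(\rho)-i_0/p^r\},\]
where $u_r$ is the largest upper jump.
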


\begin{proof}[Proof of Theorem \ref{thm slope carayol}]
We prove the theorem by analyzing the following two cases.

\textbf{Case 1:} Suppose $\rho$ is primitive. By Lemma \ref{inducing character}, there exists a tame extension $T/F$ such that $\rho\mid_{\mc W_T}$ is irreducible and $\rho\mid_{\mc W_T}={\rm Ind}_{K/T}\chi$ where $K/T$ is a totally wild Galois extension and $\chi$ is a character of $\mc W_K$. By Lemma \ref{tame restriction formula} we have 
$${\rm sl}(\rho \otimes \rho^{\vee})=
1/e(T/F) \cdot {\rm sl}(\rho\mid_{\mc W_T} \otimes \rho\mid_{\mc W_T}^{\vee}),$$
which by Theorem \ref{thm slope adjoint any wild} equals 
\[1/e(T/F) \cdot ({\rm sl}(\rho\mid_{\mc W_T}) -\dfrac{i_0(K/T)}{\dim \, \rho\mid_{\mc W_T}}),\] where $i_0(K/T)$ is the smallest nonzero lower jump in $G(K/T)$. This further gives
$$1/e(T/F) \cdot {\rm sl}(\rho\mid_{\mc W_T}) -\dfrac{i_0(K/F)}{e(T/F) \cdot p^r}={\rm sl}(\rho)-\dfrac{\phi_{K/F}(i_0(K/F))}{\dim \, \rho}.$$
The last equality holds because by Lemma \ref{same-tame-jump} we have
\[\phi_{K/F}(i_0(K/F))=
\phi_{T/F}(\phi_{K/T}(i_0(K/T)))=\phi_{T/F}(i_0(K/T))=\dfrac{i_0(K/T)}{e(T/F)}.\]

Now we analyze the case when $\rho$ is not primitive.

\textbf{Case 2:} By Lemma \ref{p-power-reps}, we have $\rho={\rm Ind}_{E/F}\tau$ where $\tau$ is a primitive representation of $\mc W_E$. 

If $\dim \, \tau=1$, then by Theorem \ref{thm slope adjoint any wild} the proposition follows. 

Now suppose $\dim\tau =p^s > 1$. Since $\tau$ is primitive, there exists a tame extension $T/E$
such that $\tau\mid_{\mc W_T}$ is irreducible and $\tau\mid_{\mc W_T}={\rm Ind}_{K/T}\chi$ where $K/T$ is a totally wild Galois extension and $\chi$ is a character of $\mc W_K$ (cf. Lemma \ref{inducing character}). 

Note that by Mackey decomposition we have
\[\rho\otimes\rho^{\vee}={\rm Ind}_{E/F}\tau\otimes {\rm Ind}_{E/F}\tau^{\vee}=\oplus_{\lambda\in G(E/F)}{\rm Ind}_{E/F}(\tau\otimes(\tau^{\vee})^{\lambda}).\]
By Proposition \ref{slope-induction}, from the above decomposition we have
\begin{eqnarray}\label{wild-non-prim-mackey1}
	{\rm sl}(\rho\otimes \rho^{\vee}) 
	&=& \max_{\lambda\in G(E/F)}\phi_{E/F}({\rm sl}(\tau\otimes(\tau^{\vee})^{\lambda}))\nonumber\\
	&=&\max_{\lambda\in G(E/F)}\phi_{E/F}(1/e(T/E) \cdot {\rm sl}(\tau\mid_{\mc W_T}\otimes((\tau\mid_{\mc W_T})^{\vee})^{\lambda})).\\\nonumber
\end{eqnarray}
Since $\tau\mid_{\mc W_T}={\rm Ind}_{K/T}\chi$, we have
\begin{eqnarray}\label{wild-non-prim-mackey2}
	\tau\mid_{\mc W_T}\otimes((\tau\mid_{\mc W_T})^{\vee})^{\lambda}&=&{\rm Ind}_{K/T}\chi\otimes {\rm Ind}_{K/T}\chi^{-\lambda}\nonumber\\
	&=&\oplus_{\mu\in G(K/T)}{\rm Ind}_{K/T}\chi^{1-\lambda \mu}.\\\nonumber
\end{eqnarray}
By making use of \eqref{wild-non-prim-mackey2} together with Proposition \ref{slope-induction}, we have
\begin{eqnarray}\label{wild-non-prim-tame-slope}
	1/e(T/E) \cdot {\rm sl}(\tau\mid_{\mc W_T}\otimes((\tau\mid_{\mc W_T})^{\vee})^{\lambda})&=&1/e(T/E) \cdot \max_{\mu\in G(K/T)}\phi_{K/T}({\rm sl}(\chi^{1-\lambda \mu}))\nonumber\\
	&=&\max_{\mu\in G(K/T)}1/e(T/E) \cdot \phi_{K/T}({\rm sl}(\chi^{1-\lambda \mu}))\nonumber\\
	&=& \max_{\mu\in G(K/T)} \phi_{T/E}(\phi_{K/T}({\rm sl}(\chi^{1-\lambda \mu}))\nonumber\\
	&=& \max_{\mu\in G(K/T)}(\phi_{K/E}({\rm sl}(\chi^{1-\lambda \mu})).\\\nonumber
\end{eqnarray}
Since $T/E$ is a tame extension, the third equality above follows by Lemma \ref{slope-restriction-induction}.

Now, by \eqref{wild-non-prim-mackey1} and \eqref{wild-non-prim-tame-slope} we have
\begin{eqnarray}\label{wild-non-prim-final-phi}
	{\rm sl}(\rho\otimes \rho^{\vee})
	&=&\max_{\lambda\in G(E/F),~\mu\in G(K/T)}\phi_{E/F}(\phi_{K/E}({\rm sl}(\chi^{1-\lambda \mu}))\nonumber\\
	&=&\max_{\lambda\in G(E/F),~ \mu\in G(K/T)}\phi_{K/F}({\rm sl}(\chi^{1-\lambda \mu}))\nonumber\\
	&=& \max_{\epsilon\in G(K/F)_1}\phi_{K/F}({\rm sl}(\chi^{1-\epsilon}))\nonumber\\
	&=&\phi_{K/F}({\rm sl}(\chi)-i_0)\\\nonumber
\end{eqnarray}
where $i_0$ is the smallest nonzero lower jump in $G(K/F)$ (cf. Lemma \ref{same-tame-slope}). 

To complete the proof, we need to compute $\phi_{K/F}({\rm sl}(\chi)-i_0)$.
Let $i_0'$ be the first nonzero lower jump of $G(K/T)$. Since $G(K/T) \subset G(K/F)$, we must have $i_0'\geq i_0$ (see Lemma \ref{sub-quo-lower-jump}(1)) and hence, ${\rm sl}(\chi)-i_0\geq{\rm sl}(\chi)-i_0'$. Since $K/T$ is a totally wild Galois extension of degree $p^s$,
following the proof of Theorem \ref{thm slope adjoint any wild} (computation of $\phi_0({\rm sl}(\chi) - l_0')$), we have
\[\phi_{K/T}({\rm sl}(\chi)-i_0)={\rm sl}({\rm Ind}_{K/T}(\chi))-i_0/p^s ={\rm sl}(\tau\mid_{\mc W_T}) -i_0/p^s.
\]
The last equality above follows by substituting $\tau\mid_{\mc W_T}={\rm Ind}_{K/T}\chi$.

Thus we obtain
\begin{eqnarray}\label{phi-kf-to-ef}
	\phi_{K/F}({\rm sl}(\chi)-i_0)&=& \phi_{T/F}(\phi_{K/T}({\rm sl}(\chi)-i_0))\nonumber\\
	&=& \phi_{T/F}({\rm sl}(\tau\mid_{\mc W_T}) -i_0/p^s)\nonumber\\
	&=& \phi_{E/F}(\phi_{T/E}({\rm sl}(\tau\mid_{\mc W_T}) -i_0/p^s))\nonumber\\
	&=& \phi_{E/F}(1/e(T/E) \cdot ({\rm sl}(\tau\mid_{\mc W_T}) -i_0/p^s))\nonumber\\
	&=& \phi_{E/F}({\rm sl}(\tau) -i_0/e(T/E) \cdot p^s).\\\nonumber
\end{eqnarray}
Since $\rho={\rm Ind}_{E/F}(\tau)$, where $E/F$ is a totally wild Galois extension of degree $p^{r-s}$, following the proof of Theorem \ref{thm slope adjoint any wild},
we have 
\begin{eqnarray}\label{wild-non-prim-phi-ef}
	\phi_{E/F}({\rm sl}(\tau) -t)&=& {\rm sl}({\rm Ind}_{E/F}(\tau))-t/p^{r-s}\nonumber\\
	&=&{\rm sl}(\rho)-t/p^{r-s},\\\nonumber
\end{eqnarray}
where $t$ is less than or equal to the smallest nonzero lower jump of $G(E/F)(\simeq G(K/F)/G(K/E))$, which by Lemma \ref{sub-quo-lower-jump} must be at least $\phi_{K/E}(i_0)$. 
So we have
\[\phi_{K/F}(i_0)=\phi_{E/F}(\phi_{K/E}(i_0))=\phi_{K/E}(i_0),\]
which further equals 
\[\phi_{T/E}(\phi_{K/T}(i_0))=\phi_{T/E}(i_0)=i_0/e(T/E).\]
The last equality follows because $T/E$ is a tame extension.

Thus by \eqref{phi-kf-to-ef} and \eqref{wild-non-prim-phi-ef}, we have
$$\phi_{K/F}({\rm sl}(\chi)-i_0)=\phi_{E/F}({\rm sl}(\tau) -i_0/e(T/E)\cdot p^s)={\rm sl}(\rho)-i_0/e(T/E)\cdot p^s \cdot p^{r-s}.$$
This shows
that
\begin{eqnarray}\label{phi-kf-final}
\phi_{K/F}({\rm sl}(\chi)-i_0)={\rm sl}(\rho)-\phi_{K/F}(i_0)/p^r.
\end{eqnarray}
Hence by \eqref{wild-non-prim-final-phi} and \eqref{phi-kf-final} we have
\[{\rm sl}(\rho\otimes \rho^{\vee})={\rm sl}(\rho)-\phi_{K/F}(i_0)/\dim \, \rho.\]
This completes the proof.
\end{proof}

\begin{remark}
Since one knows the explicit conductor formulas for the functorial lifts, e.g., conductor of pair, symmetric square lift, exterior square lift, Asai lift etc, we would like to get explicit formulas for the slope of these functorial lifts in future.
\end{remark}

\vspace{.03cm}
{\noindent \bf Acknowledgements:} The authors would like to thank Masao Oi for detailed comments on the initial draft of the article which has improved the article significantly.
They thank U. K. Anandavardhanan and Dipendra Prasad for helpful suggestions. The first author would like to thank National Board of Higher Mathematics (NBHM) for financial support and IISER Berhampur, India for providing the necessary facilities while carrying out the work. The second author would like to greatly appreciate the discussion with Gopal Prasad at Princeton University.

\bibliographystyle{amsalpha}
\bibliography{Ref-slope}
\end{document}